\numberwithin{equation}{section}
\numberwithin{figure}{section}
\newtheorem{thm}{Theorem}[section]
\newtheorem{cor}[thm]{Corollary}
\newtheorem{prop}[thm]{Proposition}
\newtheorem{defn}[thm]{Definition}
\newtheorem{rem}[thm]{Remark}
\newtheorem{thmx}{Theorem}
\newcommand{\comment}[1]{}
\newcommand{\ZZ}{\mathbb{Z}}
\newcommand{\RR}{\mathbb{R}}
\newcommand{\ph}{\varphi}
\newcommand{\PPP}{\mathcal{P}}
\newcommand{\SSS}{\mathcal{S}}
\newcommand{\GGG}{\mathcal{G}}
\newcommand{\CCC}{\mathcal{C}}
\newcommand{\MMM}{\mathcal{M}}
\newcommand{\eps}{\epsilon}
\newcommand{\FFF}{\mathcal{F}}
\DeclareMathOperator{\NE}{NE}
\newcommand{\x}{\times}
\newcommand{\E}{\mathbb{E}}
\newcommand{\R}{\RR}
\newcommand{\N}{\mathbb{N}}
\newcommand{\Q}{\mathbb{Q}}
\newcommand{\s}[1]{\mathscr{#1}}
\providecommand{\abs}[1]{\left | #1 \right |}
\providecommand{\norm}[1]{\lVert #1 \rVert}
\newcommand{\tGGG}{\tilde{\mathcal{G}}}
\begin{document}
\title[The K-Property for Unique Equilibrium States]{The K-Property for Some Unique Equilibrium States in Flows and Homeomorphisms}
\author{Benjamin Call}
\address{B. Call, Department of Mathematics, The Ohio State University, Columbus, OH 43210, \emph{E-mail address:} \tt{call.119@buckeyemail.osu.edu}}

\subjclass[2010]{37D35, 37C40, 37D30}
\date{\today}
\thanks{B.C.\ is partially supported by NSF grant DMS-$1461163$.}
\keywords{Equilibrium states, Thermodynamic Formalism, Kolmogorov property}
\commby{}

\begin{abstract}
We set out some general criteria to prove the $K$-property, refining the assumptions used in \cite{CaT} for the flow case, and introducing the analogous discrete-time result. We also introduce one-sided $\lambda$-decompositions, as well as multiple techniques for checking the pressure gap required to show the $K$-property. We apply our results to the family of Ma\~n\'e diffeomorphisms and the Katok map. Our argument builds on the orbit decomposition theory of Climenhaga and Thompson.
\end{abstract}

\maketitle
\setcounter{tocdepth}{1}

\section{Introduction}

Given a dynamical system $(X,f)$ and a continuous potential $\ph : X\to\R$, we call any invariant measure $\mu$ such that the measure-theoretic pressure $P_\mu(\ph)$ is equal to the topological pressure $P(\ph)$ an \emph{equilibrium state}, where we write $P_\mu(\ph) = h_\mu(f) + \int\ph\,d\mu$. The existence of equilibrium states is guaranteed by upper semicontinuity of the entropy map $\mu\mapsto h_\mu(f)$, as can be seen from the Variational Principle; see \cite[Chapter 9.3]{Wa} for more details.

\begin{prop}\label{prop: var prin}
Let $X$ be a compact metric space, $f : X\to X$ a homeomorphism, and $\ph : X\to\R$ continuous. Then
$$P(\ph) = \sup \left\{h_\mu(\ph) + \int\ph\,d\mu\mid \mu\in\MMM(X,f)\right\}$$
\end{prop}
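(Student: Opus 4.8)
This is the classical Variational Principle; my plan is to run the standard two-sided argument, citing \cite[Ch.\ 9.3]{Wa} for the bookkeeping that is merely technical. Throughout write $S_n\ph = \sum_{i=0}^{n-1}\ph\circ f^i$, let $d_n(x,y) = \max_{0\le i<n}d(f^ix,f^iy)$ be the Bowen metric, and recall the separated-set formula $P(\ph) = \lim_{\epsilon\to0}\limsup_{n\to\infty}\tfrac1n\log Z_n(\ph,\epsilon)$, where $Z_n(\ph,\epsilon) = \sup\{\sum_{x\in E}e^{S_n\ph(x)} : E\ (n,\epsilon)\text{-separated}\}$; also note that $\mu\mapsto h_\mu(f)+\int\ph\,d\mu$ is affine, so for the lower bound the ergodic decomposition lets me assume $\mu$ is ergodic.

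For ``$P(\ph)\ge h_\mu(f)+\int\ph\,d\mu$'' with $\mu$ ergodic, I would fix $\epsilon>0$ and a finite Borel partition $\alpha$ of diameter $<\epsilon$, and combine Shannon--McMillan--Breiman with Birkhoff: for all large $n$ there is a set of measure $\ge\tfrac12$ on which $\mu(\alpha_n(x))\le e^{-n(h_\mu(f,\alpha)-\epsilon)}$ and $S_n\ph(x)\ge n(\int\ph\,d\mu-\epsilon)$ simultaneously, where $\alpha_n=\bigvee_{i=0}^{n-1}f^{-i}\alpha$. Since each atom has small measure, at least $\tfrac12 e^{n(h_\mu(f,\alpha)-\epsilon)}$ atoms of $\alpha_n$ are needed to cover that set; choosing one such point in each atom produces many orbit segments on which $e^{S_n\ph}\ge e^{n(\int\ph\,d\mu-\epsilon)}$. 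The delicate point is to extract from these points a genuine $(n,\epsilon')$-separated set; I would do this by first replacing $\alpha$ with a partition into compact sets up to a set of tiny measure (using regularity of $\mu$ on the compact metric space $X$), so that distinct atoms lie uniformly far apart and hence separate in the Bowen metric. This gives $\tfrac1n\log Z_n(\ph,\epsilon')\ge h_\mu(f,\alpha)+\int\ph\,d\mu-O(\epsilon)-o(1)$; letting $n\to\infty$, refining $\alpha$ to recover $h_\mu(f)$, and sending $\epsilon\to0$ finishes this direction. (One can instead argue against the open-cover/spanning-set form of pressure, which avoids the separation issue at the cost of more notation.)

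For the reverse inequality I would use the Misiurewicz construction: fix $\epsilon>0$, for each $n$ take a near-optimal $(n,\epsilon)$-separated set $E_n$ with $Z_n:=\sum_{x\in E_n}e^{S_n\ph(x)}$, put $\nu_n=Z_n^{-1}\sum_{x\in E_n}e^{S_n\ph(x)}\delta_x$ and $\mu_n=\tfrac1n\sum_{i=0}^{n-1}f^i_*\nu_n$, and pass to a subsequence along which $\tfrac1n\log Z_n$ converges to its $\limsup$ and $\mu_n\to\mu$ weak-$*$; a routine argument makes $\mu$ invariant. Pick a partition $\alpha$ with $\mathrm{diam}\,\alpha<\epsilon$ and $\mu(\partial A_i)=0$. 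Because $E_n$ is $(n,\epsilon)$-separated, each atom of $\alpha_n$ carries at most one point of $E_n$, so the Gibbs inequality $\sum_j p_j(a_j-\log p_j)\le\log\sum_j e^{a_j}$ is an equality here: $\log Z_n=H_{\nu_n}(\alpha_n)+\int S_n\ph\,d\nu_n$. The standard block estimate (split $\{0,\dots,n-1\}$ into residue classes mod $q$, use concavity of $-t\log t$ and near-invariance of $\mu_n$) then gives $\tfrac1n H_{\nu_n}(\alpha_n)\le\tfrac1q H_{\mu_n}(\alpha_q)+o(1)$; the condition $\mu(\partial A_i)=0$ lets me send $n\to\infty$ so that $H_{\mu_n}(\alpha_q)\to H_\mu(\alpha_q)$, while $\tfrac1n\int S_n\ph\,d\nu_n=\int\ph\,d\mu_n\to\int\ph\,d\mu$, whence $\limsup\tfrac1n\log Z_n\le\tfrac1q H_\mu(\alpha_q)+\int\ph\,d\mu$ for every $q$, hence $\le h_\mu(f,\alpha)+\int\ph\,d\mu\le h_\mu(f)+\int\ph\,d\mu$. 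Sending $\epsilon\to0$ completes the proof.

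The routine pieces are the combinatorial Gibbs inequality, the weak-$*$ invariance of $\mu$, and the mod-$q$ arithmetic; the step I expect to be the real obstacle is the passage to the weak-$*$ limit in the upper bound, where I must produce a limit measure whose partition cells have null boundary so that $H_{\mu_n}(\alpha_q)\to H_\mu(\alpha_q)$ while simultaneously keeping entropy from escaping in the limit --- it is exactly the lack of lower semicontinuity of $\mu\mapsto h_\mu(f)$ that this construction has to work around.
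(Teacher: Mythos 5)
The paper itself does not prove this proposition; it refers the reader to \cite[Chapter 9.3]{Wa}. Your plan reproduces the two standard halves of the Variational Principle: the Misiurewicz construction for $P(\ph)\le\sup_\mu\bigl(h_\mu(f)+\int\ph\,d\mu\bigr)$ and a Shannon--McMillan--Breiman/Birkhoff argument for the reverse. The upper-bound half is correct as written: since $\mathrm{diam}\,\alpha<\epsilon$ forces each atom of $\alpha_n$ to meet $E_n$ in at most one point, the Gibbs identity $\log Z_n = H_{\nu_n}(\alpha_n)+\int S_n\ph\,d\nu_n$ holds exactly; the mod-$q$ block estimate is standard; and $\mu(\partial A_i)=0$ is precisely what makes $H_{\mu_n}(\alpha_q)\to H_\mu(\alpha_q)$ across the weak-$*$ limit. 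Contrary to your closing remark, this is actually the tamer direction --- the null-boundary partition handles it cleanly.

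The genuine soft spot is in the lower bound, in the step you yourself flag as ``delicate'': replacing $\alpha$ by compact sets $B_1,\dots,B_k\subset A_1,\dots,A_k$ does \emph{not} make distinct atoms uniformly separated, because one must keep the remainder atom $B_0=X\setminus\bigcup_i B_i$ (finitely many disjoint compacta cannot partition a connected $X$), and $B_0$ can come arbitrarily close to each $B_i$. Two points in distinct atoms of $\bigvee_{i<n}f^{-i}\eta$ may differ only at a coordinate where one of them lands in $B_0$, so no $(n,\epsilon')$-separation follows from the ``uniformly far apart'' claim. The standard repairs are: (i) your own fallback of arguing against the spanning-set/open-cover form of pressure; or (ii) Walters' device of noting that for $\delta<\tfrac12\min_{i\ne j}d(B_i,B_j)$ each $(n,\delta)$-Bowen ball meets at most $2^n$ atoms of $\eta_n$, which gives the inequality with an additive $\log 2$ loss, and then killing the $\log 2$ by applying the partial result to $(f^m,S_m\ph)$ and letting $m\to\infty$. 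With either device substituted for the ``compact atoms $\Rightarrow$ separated'' shortcut, your plan is a complete and correct rendering of the textbook argument.
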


Uniqueness is a more difficult question, and conditions that imply it have been much studied. In classical settings, such as when $f$ is Anosov and $\ph$ is H\"older continuous, equilibrium states are unique and have strong mixing and statistical properties. Proofs of this have been obtained using a variety of different techniques. One method, due to Bowen \cite{Bow76}, provides three conditions which need to be checked to guarantee uniqueness of these equilibrium states (and, as later work would show \cite{L}, strong mixing properties). Later, Climenhaga and Thompson \cite{CT} introduced the theory of orbit decompositions, weakening each of Bowen's conditions to ``non-uniform'' versions, and this theory has been applied successfully in a variety of settings \cite{CFT,CFT2,Wa19,CKP,BCFT}. However, this theory does not a priori provide any mixing properties of the unique equilibrium states.

The $K$-property is a mixing property which is stronger than mixing of all orders, and weaker than Bernoulli. Finding examples of systems with the $K$-property which are not Bernoulli, as well as cases when $K$ can be shown to imply Bernoulli is an active area of research \cite{KHV,PTV}. In \cite{CaT}, the author and Thompson adapt Ledrappier's criterion for proving the $K$-property \cite{L} to the flow setting in order to show that some systems with unique equilibrium states built through the Climenhaga-Thompson decomposition have the $K$-property. The motivation behind this was to apply it to geodesic flows on rank 1 manifolds of non-positive curvature.

In this paper, we will refine and remove many of the assumptions used in \cite{CaT} to show the $K$-property, proving the following.

\begin{thmx}\label{Theorem A}
	Let $(X,\FFF)$ be a continuous flow on a compact metric space, and $\ph : X\to \R$ a continuous potential. Suppose that $P_{\exp}^\perp(\ph) < P(\ph)$ and that $(X,\FFF)$ is asymptotically entropy expansive. Then suppose that $X\x [0,\infty)$ has a $\lambda$-decomposition $(\PPP,\GGG,\SSS)$ with the following properties:
	\begin{enumerate}
		\item $\GGG(\eta)$ has specification at every scale $\delta > 0$ for all $\eta > 0$;
		\item $\ph$ has the Bowen property on $\GGG(\eta)$ for all $\eta > 0$;
	\end{enumerate}
	and furthermore that $P(\bigcap_{t\in\R}(f_t\x f_t)\tilde{\lambda}^{-1}(0),\Phi) < 2 P(\ph)$, where $\Phi(x,y) = \ph(x) + \ph(y)$ and $\tilde{\lambda}(x,y) = \lambda(x)\lambda(y)$.
	Then $(X,\FFF,\ph)$ has a unique equilibrium state, and it is $K$.
\end{thmx}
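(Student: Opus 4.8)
The plan is to follow the Ledrappier-style strategy adapted to flows, as in \cite{L} and \cite{CaT}, but leveraging the refined hypotheses. First, I would establish existence and uniqueness of the equilibrium state $\mu$ for $(X,\FFF,\ph)$ by checking that the $\lambda$-decomposition $(\PPP,\GGG,\SSS)$ satisfies the Climenhaga--Thompson hypotheses: the condition $P_{\exp}^\perp(\ph) < P(\ph)$ together with asymptotic entropy expansivity controls the pressure of the obstructions to expansivity, hypothesis (1) gives specification on $\GGG$, and hypothesis (2) gives the Bowen property on $\GGG$. The only subtlety here is that we have a $\lambda$-decomposition rather than a decomposition with uniform gap, so I would need to invoke the version of the Climenhaga--Thompson machinery that accommodates $\lambda$-decompositions (as developed in \cite{CaT}), concluding $\mu$ is the unique equilibrium state and in particular is ergodic, fully supported, and has positive entropy.

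The heart of the argument is upgrading ergodicity to the $K$-property. By Ledrappier's criterion, it suffices to show that the product system $(X\times X, \FFF\times\FFF)$ has a unique equilibrium state for the potential $\Phi(x,y)=\ph(x)+\ph(y)$, namely $\mu\times\mu$; the $K$-property of $\mu$ then follows because any non-trivial factor with zero fiber entropy would produce a second ergodic equilibrium state for $\Phi$ in the product. So the bulk of the work is to run the Climenhaga--Thompson / $\lambda$-decomposition machinery a second time, now on the product flow. Here is where the new hypotheses do their job: the product decomposition $(\PPP\times\PPP,\,\GGG\times\GGG,\,\SSS\times\SSS)$ (suitably interpreted as a $\tilde\lambda$-decomposition with $\tilde\lambda(x,y)=\lambda(x)\lambda(y)$) inherits specification on $\GGG(\eta)\times\GGG(\eta)$ from hypothesis (1), and the Bowen property for $\Phi$ from hypothesis (2), while asymptotic entropy expansivity of the product follows from that of $(X,\FFF)$. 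The one genuinely new pressure estimate needed is exactly the hypothesis $P\bigl(\bigcap_{t\in\R}(f_t\times f_t)\tilde\lambda^{-1}(0),\Phi\bigr) < 2P(\ph)$: this is the product analogue of the pressure gap that must be small, controlling the ``bad'' part of the product decomposition where neither coordinate lies in the good core. I would combine this with $P_{\exp}^\perp(\ph)<P(\ph)$ (which passes to the product as a bound $P_{\exp}^\perp(\Phi) < 2P(\ph) = P(\Phi)$) to verify the full pressure-gap hypothesis of the abstract uniqueness theorem on $X\times X$.

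Granting uniqueness of the equilibrium state for $(X\times X,\FFF\times\FFF,\Phi)$, I would identify it: $\mu\times\mu$ is $\FFF\times\FFF$-invariant with $h_{\mu\times\mu}(\FFF\times\FFF) + \int\Phi\,d(\mu\times\mu) = 2\bigl(h_\mu(\FFF)+\int\ph\,d\mu\bigr) = 2P(\ph) = P(\Phi)$, so it is \emph{an} equilibrium state, hence \emph{the} equilibrium state. Ledrappier's theorem then yields that $\mu$ is $K$ (this is the standard deduction: if $\mu$ were not $K$, its Pinsker factor would be non-trivial, and one builds from it a distinct ergodic equilibrium state for $\Phi$, contradicting uniqueness). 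The main obstacle I anticipate is the bookkeeping in the second step: verifying that a $\lambda$-decomposition of $X\times[0,\infty)$ really does induce a bona fide $\tilde\lambda$-decomposition of $(X\times X)\times[0,\infty)$ with all the orbit-gluing and pressure estimates intact — in particular, that the collection of obstructions to specification and to the Bowen property in the product is controlled by $\tilde\lambda^{-1}(0)$ and its flow-invariant core, so that the hypothesis $P(\bigcap_t (f_t\times f_t)\tilde\lambda^{-1}(0),\Phi)<2P(\ph)$ is exactly the right quantity. This is the place where the ``refining the assumptions'' contribution of the paper is really being cashed in, and where I'd expect the technical heart of the proof to lie.
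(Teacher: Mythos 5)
Your overall strategy---establish uniqueness in the base, then establish uniqueness in the product for $\Phi$, then invoke Ledrappier's criterion---is the same as the paper's. However, there is a genuine gap in how you would run the Climenhaga--Thompson machinery on the product flow, and it is precisely the obstacle that makes the flow case harder than the discrete case.

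You claim that the expansivity hypothesis ``passes to the product as a bound $P_{\exp}^\perp(\Phi) < 2P(\ph) = P(\Phi)$.'' This is false for flows. For any flow, $\NE(\eps;\FFF\times\FFF) = X\times X$: if $(x,y)\in X\times X$, then $(f_s x, y)\in \Gamma_\eps((x,y))$ for all small $s$, but these points do \emph{not} lie in $f_{[-s,s]}(x)\times f_{[-s,s]}(y)$ (they require different time shifts in the two coordinates). Hence every $\FFF\times\FFF$-invariant measure gives full mass to $\NE(\eps;\FFF\times\FFF)$, and $P_{\exp}^\perp(\Phi;\FFF\times\FFF) = P(\Phi)$ identically. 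This is the same reason the Cartesian square of an expansive flow is never expansive, and it means the expansivity hypothesis of the Climenhaga--Thompson flow theorem can \emph{never} be verified directly on a product flow. (In the discrete-time setting your reasoning is correct, which is why the paper's Theorem~\ref{Theorem B} has a simpler proof than Theorem~\ref{Theorem A}.) The paper circumvents this by using the notion of \emph{product expansivity} from \cite{CaT}, and the new content of \S\ref{Expansivity} is precisely to show that the hypothesis $P_{\exp}^\perp(\ph)<P(\ph)$ forces every equilibrium state for $\Phi$ to be product expansive (Proposition~\ref{prop: exp gap} and Corollary~\ref{cor: prod exp}), which is what the abstract flow result Theorem~\ref{general} actually needs. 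Without this, your step ``verify the full pressure-gap hypothesis of the abstract uniqueness theorem on $X\times X$'' cannot go through for the expansivity part of the hypotheses.

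A secondary, smaller imprecision: the product decomposition is not $(\PPP\times\PPP, \GGG\times\GGG, \SSS\times\SSS)$, nor does the good collection equal $\GGG(\eta)\times\GGG(\eta)$. The $\tilde\lambda$-decomposition's good set satisfies only the inclusion $\tGGG(\eta)\subset\GGG(\eta/\norm{\lambda})\times\GGG(\eta/\norm{\lambda})$, and the bad set is correspondingly larger than $\PPP\times\PPP$ (an orbit segment is bad if \emph{either} coordinate spends too much time where $\lambda$ is small). This inclusion, not an equality, is what yields specification and the Bowen property for $\tGGG(\eta)$, and it is why the pressure hypothesis involves $\tilde\lambda^{-1}(0)$ (a union $\lambda^{-1}(0)\times X \cup X\times\lambda^{-1}(0)$) rather than $\lambda^{-1}(0)\times\lambda^{-1}(0)$. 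You hedge with ``suitably interpreted,'' but in a full proof this would need to be spelled out, as the hypothesis you are given is about $\tilde\lambda^{-1}(0)$, not about $\PPP\cup\SSS$.
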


\begin{rem}
	This theorem uses slightly stronger assumptions than we will use in the proof. In particular, the assumption of asymptotic entropy expansivity can be replaced by the condition that the entropy map on the product space $(X\x X,\FFF\x \FFF)$ is upper semicontinuous.
\end{rem}

There are many techniques for showing uniqueness of equilibrium states outside of the uniformly hyperbolic setting, each with different advantages, both in terms of ease of application and strength of results. See \cite{ClimPes} for a thorough review of these techniques in the non-uniformly hyperbolic setting. Theorems \ref{Theorem A} and \ref{Theorem B} provide mild conditions under which the orbit decomposition theory of \cite{CT} gives the $K$-property.

The theory of $\lambda$-decompositions can be translated to the discrete-time setting, and the corresponding theorem holds as well, with the proof simplifying somewhat.

\begin{thmx}\label{Theorem B}
Let $(X,f)$ be a homeomorphism on a compact metrix space and $\ph : X\to \R$ a continuous potential. Suppose that $P_{\exp}^\perp(\ph) < P(\ph)$ and $(X,f)$ is asymptotically entropy expansive. Suppose $X\x \N$ has a $\lambda$-decomposition $(\PPP,\GGG,\SSS)$ with the following properties:
\begin{enumerate}
	\item $\GGG(\eta)$ has specification at every scale $\delta > 0$ for all $\eta > 0$;
	\item $\ph$ has the Bowen property on $\GGG(\eta)$ for all $\eta > 0$;
\end{enumerate}
and furthermore,
$$P\left(\bigcap_{n\in\ZZ} (f^n\x f^n)\tilde{\lambda}^{-1}(0),\Phi\right) < 2P(\ph).$$
Then $(X,f,\ph)$ has a unique equilibrium state, and it has the $K$-property.
\end{thmx}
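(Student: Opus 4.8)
The plan is to reduce the statement to two uniqueness assertions by means of Ledrappier's criterion \cite{L}: if $\mu$ is an ergodic equilibrium state for $(X,f,\ph)$, then $\mu$ has the $K$-property provided $\mu\times\mu$ is the \emph{unique} equilibrium state for the product system $(X\x X,f\x f,\Phi)$. Since pressure is additive over products we have $P(\Phi)=2P(\ph)$, and $P_{\mu\times\mu}(\Phi)=2P_\mu(\ph)$, so $\mu\times\mu$ is automatically an equilibrium state for $\Phi$ and the content is its uniqueness. Thus it suffices to prove (a) $(X,f,\ph)$ has a unique equilibrium state $\mu$ (which is then ergodic, since its ergodic components would otherwise also be equilibrium states), and (b) $(X\x X,f\x f,\Phi)$ has a unique equilibrium state. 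Both are obtained from the Climenhaga--Thompson theorem \cite{CT} once we exhibit orbit decompositions satisfying its hypotheses.

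For (a), fix a small parameter $\eta>0$. Hypotheses (1) and (2) give specification at every scale and the Bowen property on $\GGG(\eta)$, and the $\lambda$-decomposition $(\PPP,\GGG,\SSS)$ of $X\x\N$ supplies the prefix and suffix collections, enlarged to absorb the collars on which $\lambda$ stays below $\eta$. It remains to verify the pressure gap on the obstruction. The prefixes and suffixes outside $\GGG(\eta)$ are orbit segments lying near $\lambda^{-1}(0)$, and the pressure of the measures they carry in the weak-$*$ limit, together with that of the non-expansive set, is at most $P_{\exp}^\perp(\ph)<P(\ph)$; hence for $\eta$ small enough the obstruction pressure is strictly below $P(\ph)$. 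Upper semicontinuity of $\mu\mapsto h_\mu(f)$ follows from asymptotic entropy expansivity, so equilibrium states exist, and Climenhaga--Thompson then produces the unique $\mu$.

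For (b), put $\tilde\lambda(x,y)=\lambda(x)\lambda(y)$ and construct a product $\lambda$-decomposition of $(X\x X)\x\N$ whose good core at parameter $\eta$ consists of the orbit segments on which \emph{both} coordinates simultaneously lie in good cores of the original decomposition. This good part has specification at every scale (run the two factor specifications in parallel at a common scale and gluing time), and $\Phi$ has the Bowen property on it because $\Phi(x,y)=\ph(x)+\ph(y)$ is a coordinatewise sum of potentials with the Bowen property. The product obstruction breaks, up to the collar enlargements, into ``mixed'' pieces, where one coordinate lies in a bad prefix/suffix or the non-expansive set of $f$ and the other is unconstrained, of pressure at most $P_{\exp}^\perp(\ph)+P(\ph)<2P(\ph)$ by subadditivity of entropy and additivity of $\Phi$ over the two factors; and a ``fully singular'' piece, built from orbit segments with both coordinates near $\tilde\lambda^{-1}(0)$, whose weak-$*$ limiting measures are carried by the $(f\x f)$-invariant set $\bigcap_{n\in\ZZ}(f^n\x f^n)\tilde\lambda^{-1}(0)$ and hence have pressure strictly below $2P(\ph)$ by hypothesis. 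Since the entropy map on $X\x X$ is upper semicontinuous --- asymptotic entropy expansivity passes to products, or one appeals to the Remark --- and $P(\Phi)=2P(\ph)$, Climenhaga--Thompson gives $(X\x X,f\x f,\Phi)$ a unique equilibrium state, which must be $\mu\times\mu$, and Ledrappier's criterion yields that $\mu$ is $K$.

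The main obstacle is the pressure bookkeeping for the product system: decomposing the product obstruction into finitely many pieces of the form ``(bad in one factor) $\x$ (arbitrary in the other)'' and ``(both factors singular)'', and checking uniformly that every piece stays strictly below $2P(\ph)$ as the decomposition parameter is refined. The delicate piece is the fully singular one, where one must pass from orbit segments lying near $\tilde\lambda^{-1}(0)$ to the genuinely $(f\x f)$-invariant set $\bigcap_{n\in\ZZ}(f^n\x f^n)\tilde\lambda^{-1}(0)$ and transfer the pressure bound across this limit; in the flow version of \cite{CaT} this was entangled with a reparametrization of time, and the simplification promised in the introduction is precisely that in the discrete-time setting this reduction, and the comparison of orbit-segment pressures with invariant-set pressures, is clean. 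A secondary check is that the good part of the product decomposition really does have specification at \emph{every} scale $\delta>0$, which requires combining the factor specifications at a single common scale rather than scale by scale.
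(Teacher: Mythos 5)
Your overall strategy matches the paper's: reduce to uniqueness for $(X\times X, f\times f, \Phi)$, apply Climenhaga--Thompson there, and finish by Ledrappier. The ingredients you assemble for the product decomposition are also the right ones --- $\tilde\lambda(x,y)=\lambda(x)\lambda(y)$, inclusion of $\tGGG(\eta)$ into a product of factor good cores so that specification and the Bowen property transfer, and upper semicontinuity of the entropy map on $X\times X$ from asymptotic entropy expansivity.

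The pressure estimate, however, is where the argument does not go through as written. For the base system you claim that the obstruction pressure --- the pressure of $\PPP\cup\SSS = B(\eta)$ --- together with that of the non-expansive set is at most $P_{\exp}^\perp(\ph)$. These are two independent hypotheses in Climenhaga--Thompson: there is no reason for the pressure carried by orbit segments with small $\lambda$-average to be dominated by the pressure of obstructions to expansivity. The paper obtains the base pressure gap $P(B_\infty,\ph)<P(\ph)$ from the \emph{product} hypothesis (Proposition~\ref{prop: product gap sufficient}), not from expansivity; but since base uniqueness follows a posteriori from product uniqueness via Theorem~\ref{NewLedr}, this error is not fatal to the strategy, just to the order in which you argue. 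The more serious gap is in (b). The bad set $\tilde B(\eta)$ for the $\tilde\lambda$-decomposition consists of orbit segments whose time-average of $\tilde\lambda$ is small, and this does not split into a ``mixed'' piece (one coordinate bad, the other arbitrary) and a ``fully singular'' piece; indeed $\tilde\lambda^{-1}(0)$ is already $(\lambda^{-1}(0)\times X)\cup(X\times\lambda^{-1}(0))$, so there is no separate ``both coordinates singular'' regime, and the two pieces you describe collapse into each other. More importantly, the step you flag as ``delicate'' --- passing from the pressure of orbit segments in $\tilde B(\eta)$ to the pressure of the invariant set $\tilde B_\infty=\bigcap_{n\in\ZZ}(f^n\times f^n)\tilde\lambda^{-1}(0)$ --- is exactly where the substance lies, and you assert it is ``clean'' without giving a mechanism. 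The paper supplies that mechanism: the discrete-time version of \cite[Theorem 3.6]{CaT} (Proposition~\ref{prop: discrete pressure decrease}) shows $\lim_{\eta\downarrow 0}P(\tilde B(\eta),\Phi)\leq P(\tilde B_\infty,\Phi)$ whenever the entropy map is upper semicontinuous, and together with the standing hypothesis $P(\tilde B_\infty,\Phi)<2P(\ph)=P(\Phi)$ this gives the required gap $P(\tilde B(\eta),\Phi)<P(\Phi)$ for small $\eta$. Your proof needs to invoke (or reprove) this lemma; the ad hoc decomposition of $\tilde B(\eta)$ and the subadditivity bound $P_{\exp}^\perp(\ph)+P(\ph)$ do not substitute for it.
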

In some applications of \cite{CT}, either the collection of prefixes or the collection of suffixes is empty. However, $\lambda$-decompositions cannot apply to any such example. Consequently, we introduce \emph{one-sided $\lambda$-decompositions} to account for these examples, and show that the analogous results for the $K$-property hold as well. In particular, these decompositions were used in \cite{CFT2} to establish unique equilibrium states for small $C^1$-perturbations of a class of Ma\~n\'e diffeomorphisms.

The product pressure gap in both the discrete-time and flow cases is non-trivial to check, even with the assumption of a pressure gap in the base system. We provide two different methods which can be used to check this gap for future applications of these results. We then show that they can be applied to equilibrium states for the Katok map and Ma\~n\'e diffeomorphisms for some potentials sufficiently close to constant. Although this result is not new for the Katok map, it outlines a relatively simple approach to obtaining the $K$-property in this setting. However, to the best of the author's knowledge, this is a new result for non-constant potentials for Ma\~n\'e diffeomorphisms, and we obtain the following result.
\begin{thmx}\label{Theorem C}
	For any Ma\~n\'e diffeomorphism $f$ sufficiently $C^0$-close to Anosov, and for a $C^1$-open set of perturbations of $f$, the unique equilibrium state for all H\"older continuous potentials sufficiently close to a constant is $K$.
\end{thmx}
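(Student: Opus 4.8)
The plan is to apply the one-sided analogue of Theorem~\ref{Theorem B} to a Ma\~n\'e diffeomorphism $f$ of $\mathbb{T}^3$ (and its $C^1$-perturbations) together with a H\"older potential $\ph$ with $\Var(\ph) = \sup\ph - \inf\ph$ sufficiently small. Recall that $f$ is a derived-from-Anosov perturbation of a hyperbolic toral automorphism $A$, supported in a small neighborhood of a fixed point $p$ and small in $C^0$, so that $f$ is partially hyperbolic with a one-dimensional center bundle $E^c$ which is uniformly contracted outside a small neighborhood of $p$ but satisfies $\|Df|_{E^c}\| \geq 1$ there. Being partially hyperbolic with one-dimensional center, $f$ is asymptotically entropy expansive, and since the entropy-expansivity defect is subadditive under products so is $f\x f$; in particular the entropy map on $(X\x X, f\x f)$ is upper semicontinuous, which by the remark following Theorem~\ref{Theorem A} is all that is needed. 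The one-sided $\lambda$-decomposition $(\PPP,\GGG,\SSS)$ built in \cite{CFT2} from $\lambda(x) = \max\{0,-\log\|Df|_{E^c(x)}\|\}$ has $\GGG(\eta)$ with specification at every scale for every $\eta > 0$, has $\ph$ with the Bowen property on each $\GGG(\eta)$, and satisfies $P_{\exp}^\perp(\ph) < P(\ph)$; all of this holds on a $C^1$-open set, as in \cite{CFT2}. It therefore remains only to verify the product pressure gap.

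Set $A = \lambda^{-1}(0) = \{x : \|Df|_{E^c(x)}\| \geq 1\}$, a small closed neighborhood of $p$, so that $\tl^{-1}(0) = (A\x X)\cup(X\x A)$, and let $\mathcal{Z} = \bigcap_{n\in\ZZ}(f^n\x f^n)\tl^{-1}(0)$. Then $\mathcal{Z}$ is a compact $(f\x f)$-invariant set: it consists of the pairs $(x,y)$ such that at every time $n$ at least one of $f^n x$, $f^n y$ lies in $A$. By the variational principle it suffices to show $h_\nu(f\x f) + \int\Phi\,d\nu < 2P(\ph)$ for every $\nu\in\MMM(\mathcal{Z}, f\x f)$. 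Evaluating the defining condition at $n=0$ and integrating gives $\nu_1(A) + \nu_2(A)\geq 1$ for the marginals, so by symmetry assume $\nu_1(A)\geq 1/2$; using $h_\nu(f\x f)\leq h_{\nu_1}(f) + h_{\nu_2}(f)$ and the variational principle for $\ph$ on $X$, matters reduce to showing
\[
h_{\nu_1}(f) + \int\ph\,d\nu_1 \;<\; P(\ph) \qquad\text{whenever } \nu_1(A)\geq \tfrac12 .
\]

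This is exactly the first of the paper's two methods for the product pressure gap, refining the argument used for the base gap. An Abramov--Rokhlin decomposition of $h_{\nu_1}(f)$ over the factor given by the $\{A,A^c\}$-itinerary yields $h_{\nu_1}(f) \leq \delta + \nu_1(A)\,\epsilon + \nu_1(A^c)\,\htop(f)$, where $\epsilon$ bounds the topological entropy of the maximal $f$-invariant subset of $A$ (small, because the perturbation is supported near the almost-hyperbolic fixed point $p$) and $\delta$ bounds the entropy rate of the itinerary process (small, because $A$ is small, so passage through $A$ takes many steps and the itinerary is slow). Since $\nu_1(A)\geq 1/2$ and $\int\ph\,d\nu_1\leq\sup\ph$, while $P(\ph)\geq\htop(f)+\inf\ph$ by testing against a measure of maximal entropy,
\[
P(\ph) - \Big(h_{\nu_1}(f) + \int\ph\,d\nu_1\Big) \;\geq\; \tfrac12\htop(f) - \delta - \epsilon - \Var(\ph).
\]
As $f$ is homotopic to $A$ we have $\htop(f)\geq\htop(A) > 0$ uniformly over the $C^1$-open set; taking $f$ sufficiently $C^0$-close to $A$ makes $\delta,\epsilon$ small, and then taking $\Var(\ph)$ small makes the right-hand side positive. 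Thus $P(\mathcal{Z},\Phi) < 2P(\ph)$, and the one-sided analogue of Theorem~\ref{Theorem B} gives a unique equilibrium state with the $K$-property.

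The main obstacle is this last estimate: controlling $\mathcal{Z}$, where neither coordinate need spend most of its time near $p$ — only the pair does. The pigeonhole step reduces it to a single-coordinate statement, but one must then bound both the entropy that orbit segments confined to the small bad region $A$ can carry and the entropy of the $\{A,A^c\}$-itinerary, and it is precisely here that ``$f$ close to Anosov'' and ``$\ph$ close to constant'' enter; making the constants close requires the support of the perturbation (hence $\epsilon$ and $\delta$) to be genuinely small. A secondary technical point is the upper semicontinuity of the entropy map on the product, handled by subadditivity of the entropy-expansivity defect.
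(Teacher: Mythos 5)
Your plan matches the paper at the top level: build the one-sided $\lambda$-decomposition from the center bundle, import specification, the Bowen property, and the base expansivity gap from \cite{CFT2}, note upper semicontinuity of the product entropy map, verify the product pressure gap $P(\mathcal{Z},\Phi)<2P(\ph)$, and propagate to nearby H\"older potentials by openness of the pressure gap. Where you diverge is precisely on the product pressure gap, and that is where the proposal has a real gap.

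The paper establishes $P(\mathcal{Z},\Phi)<2P(\ph)$ by first reducing (Theorem \ref{thm: measure estimate}) to the single-coordinate bound $\mu(\lambda^{-1}(0))<\tfrac12$, where $\mu$ is the \emph{known} unique equilibrium state, and then verifies that bound by pushing $\mu$ forward through the BFSV semiconjugacy to the linear Anosov, where it becomes Lebesgue measure and $\lambda^{-1}(0)$ maps into a small ball. Your pigeonhole step---that any $\nu\in\MMM(\mathcal{Z},f\x f)$ has a marginal with $\nu_1(A)\geq\tfrac12$---is essentially the same first move, but you then try to finish by bounding the pressure of \emph{every} invariant $\nu_1$ with $\nu_1(A)\geq\tfrac12$, rather than exploiting that the marginals of an equilibrium state are forced to equal $\mu$. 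The closing estimate
\[
h_{\nu_1}(f)\;\leq\;\delta\;+\;\nu_1(A)\,\epsilon\;+\;\nu_1(A^c)\,\htop(f)
\]
is attributed to an Abramov--Rokhlin decomposition over the $\{A,A^c\}$-itinerary factor, but it does not follow from that formula. Abramov--Rokhlin gives $h_{\nu_1}(f)=h_{\text{factor}}+\int H(\,\cdot\mid y)\,d(\pi_*\nu_1)$, and neither piece is bounded as you claim: the two-symbol itinerary factor can carry entropy up to $\log 2$, and for Ma\~n\'e diffeomorphisms there is no slow-down forcing long $A$-blocks (only the center direction is slowed; the stable/unstable subbundles remain uniformly hyperbolic across $A$, so passages through $A$ can be of bounded length and the itinerary need not be a low-entropy process). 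Likewise the fiber entropy over a given itinerary is not the weighted average $\nu_1(A)\epsilon+\nu_1(A^c)\htop(f)$: the quantity $\epsilon$ (entropy of the maximal $f$-invariant subset of $A$) bounds the fiber only over the constant-$A$ itinerary, whereas fibers over itineraries that merely visit $A$ with positive frequency can still carry entropy close to $\htop(f)$ through the hyperbolic directions inside $A$. So this step needs a genuine argument that is not supplied. If you want to avoid the BFSV semiconjugacy entirely, an entropy-production argument in the spirit of Theorem \ref{prop: entropy production} is a more plausible alternative than the itinerary decomposition, but that route also requires work to handle the product system; the paper's measure estimate is by far the shortest path here because the MME of the Ma\~n\'e system is well understood.

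One further small point: the paper replaces the upper semicontinuous indicator $\chi_{X\setminus B(p,\rho)}$ of \cite{CFT2} by the lower semicontinuous $\chi_{X\setminus\overline{B(p,.9\rho)}}$ to satisfy the $\lambda$-decomposition hypotheses. Your choice $\lambda(x)=\max\{0,-\log\|Df|_{E^c(x)}\|\}$ is continuous and hence avoids that issue, but you should still check that the resulting $\GGG(\eta)$ coincides (or is comparable) with the good collection for which \cite{CFT2} proves specification and the Bowen property, since those results are stated for the indicator-based decomposition.
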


We note that this applies immediately to potentials $\ph = t\ph^u$ for $t$ close to $0$, where $\ph^u$ is the geometric potential and to the SRB measure when $\ph^u$ is close to constant.

The structure of this paper is as follows. In \S \ref{background}, we present the necessary background. In \S \ref{Expansivity} and \S \ref{Pressure}, we prove Theorem \ref{Theorem A}. We also provide two methods for checking the pressure estimates that appear in Theorems \ref{Theorem A} and \ref{Theorem B}. In \S \ref{Discrete}, we prove Theorem \ref{Theorem B} and introduce one-sided $\lambda$ decompositions. Finally, in \S \ref{Applications} we discuss applications of these results to the Katok map and Ma\~n\'e diffeomorphisms, proving Theorem \ref{Theorem C}.

\section{Background} \label{background}
Throughout this section, let $X$ be a compact metric space, $\FFF = (f_t)_{t\in\R}$ a continuous flow on $X$, $f : X\to X$ a homeomorphism, and $\ph : X\to\R$ a continuous potential. Finally, $\MMM(X,f)$ will denote the space of $f$-invariant probability measures on $X$.

\subsection{The K-Property}
The $K$-property is a mixing property that is stronger than mixing of all orders and weaker than Bernoulli. It was originally defined by the existence of an invariant algebra with certain properties.

\begin{defn}
$(X,f,\s{B},\mu)$ has the $K$-property if there is a sub $\sigma$-algebra $\mathscr{K}\subset\s{B}$ such that $f\s{K}\supset \s{K}$, $\bigvee_{i=0}^\infty f^i\s{K} = \s{B}$ and $\bigcap_{i=0}^\infty f^i\s{K} = \{0,X\}$.
\end{defn}

Lifting this definition to flows follows in an elegant manner:

\begin{defn}
$(X,\FFF,\mu)$ is a $K$-flow if and only if for some $t\neq 0$, $(X,f_t,\mu)$ is $K$.
\end{defn}

There are multiple equivalent definitions of the $K$-property all of which hold for flows as well by the above definition. We collect some of them here, omitting the proofs, and instead referring the reader to \cite{CFSS}.

\begin{prop}
A dynamical system $(X,f,\mu)$ has the $K$-property if and only if it has completely positive entropy---that is, there exists no non-trivial factor with zero entropy.
\end{prop}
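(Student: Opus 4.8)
The plan is to route through the \emph{Pinsker $\sigma$-algebra} $\Pi = \Pi_\mu(f)$, defined as the largest $f$-invariant sub-$\sigma$-algebra of $\s{B}$ on which $f$ has zero entropy; equivalently, $\Pi$ is generated by all finite partitions $\xi$ with $h_\mu(f,\xi)=0$, and --- since $(X,\s{B},\mu)$ is a Lebesgue space --- measurable factors of $(X,f,\mu)$ correspond bijectively to $f$-invariant sub-$\sigma$-algebras. Under this correspondence, ``no non-trivial factor has zero entropy'' is exactly the statement that $\Pi = \{\es,X\}$ mod $\mu$. So the proposition reduces to: $(X,f,\mu)$ has the $K$-property if and only if $\Pi$ is trivial. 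One first records Rokhlin's observation that the join of two zero-entropy $f$-invariant sub-$\sigma$-algebras again has zero entropy, which is what makes $\Pi$ genuinely a $\sigma$-algebra and legitimizes this reduction; a convenient equivalent reformulation of completely positive entropy is then that $h_\mu(f,\xi)>0$ for every non-trivial finite partition $\xi$.

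The core of the argument is the Rokhlin--Sinai theorem: if $\s{K}$ is any sub-$\sigma$-algebra with $f\s{K}\supseteq\s{K}$ and $\bigvee_{i\ge 0}f^i\s{K}=\s{B}$, then the tail $\sigma$-algebra $\bigcap_{i\ge 0}f^{-i}\s{K}$ coincides, mod $\mu$, with $\Pi$. Granting this, the equivalence follows in both directions. If $(X,f,\mu)$ has the $K$-property, witnessed by $\s{K}$, the triviality axiom in the definition forces the tail to be trivial, hence $\Pi$ is trivial, hence CPE. Conversely, assuming CPE, fix a countable generating partition $\beta$ (which exists on a Lebesgue space) and put $\s{K}=\bigvee_{i\le 0}f^i\beta$; then $f\s{K}\supseteq\s{K}$ and $\bigvee_{i\ge 0}f^i\s{K}=\s{B}$ hold automatically, while Rokhlin--Sinai identifies $\bigcap_{i\ge 0}f^{-i}\s{K}$ with $\Pi=\{\es,X\}$, so $\s{K}$ exhibits the $K$-property.

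The main obstacle is the Rokhlin--Sinai theorem itself. Its proof is an information-theoretic computation: for a finite partition $\xi$ one estimates $H_\mu\bigl(\xi \mid \bigcap_i f^{-i}\s{K}\bigr)$ in terms of $h_\mu(f,\xi)$, using the increasing and decreasing martingale (Doob) convergence theorems for the filtrations $(f^n\s{K})_n$ and $(f^{-n}\s{K})_n$ together with $f$-invariance of $\mu$ and subadditivity of $H_\mu$, and concludes that $\xi$ lies in the tail precisely when $h_\mu(f,\xi)=0$. The supporting facts --- that on a Lebesgue space every sub-$\sigma$-algebra is mod $\mu$ generated by a countable partition, the existence of countable generators, and the factor/invariant-$\sigma$-algebra correspondence --- are standard Lebesgue-space structure theory. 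Since the statement serves only an organizational role here, we do not reproduce the details and refer to \cite{CFSS}.
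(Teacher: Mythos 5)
The paper does not actually prove this proposition: immediately before it the text says the equivalent characterizations of the $K$-property are collected ``omitting the proofs'' with a reference to \cite{CFSS}, so there is no in-paper argument to compare against. Your sketch is the standard one and is correct: the chain $K$-property $\Leftrightarrow$ trivial Pinsker $\sigma$-algebra $\Leftrightarrow$ CPE hinges on the Rokhlin--Sinai theorem identifying the tail $\bigcap_{i\ge 0}f^{-i}\s{K}$ with the Pinsker $\sigma$-algebra $\Pi$ for any $\s{K}$ satisfying $f\s{K}\supset\s{K}$ and $\bigvee_{i\ge 0}f^i\s{K}=\s{B}$, together with the existence of a countable generator $\beta$ on a Lebesgue space to produce the witnessing $\s{K}=\bigvee_{i\le 0}f^i\beta$ in the CPE-to-$K$ direction; both directions check out, and you rightly defer the information-theoretic details of Rokhlin--Sinai. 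One small point worth flagging: the paper's definition of the $K$-property writes the tail condition as $\bigcap_{i=0}^\infty f^i\s{K}=\{\es,X\}$, but under $f\s{K}\supset\s{K}$ the algebras $f^i\s{K}$ are increasing in $i$, so that intersection is just $\s{K}$ and the condition would force $\s{K}$ (hence $\s{B}$) to be trivial; this is evidently a typo for $\bigcap_{i=0}^\infty f^{-i}\s{K}=\{\es,X\}$, which is what you correctly work with.
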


Equivalently, defining the \emph{Pinsker factor} to be the largest zero entropy factor, this says that $(X,f,\mu)$ has the $K$-property if and only if the Pinsker factor is trivial. Formulating the $K$-property in the form of mixing conditions yields

\begin{prop}
$(X,f,\mu)$ is $K$ if given finitely many measurable sets $A_1,\cdots, A_n$ and $B$,
$$\lim\limits_{m\to\infty}\sup_{C\in \s{A}_m}\abs{\mu(B\cap C) - \mu(B)\mu(C)} = 0$$
where $\s{A}_m$ is the minimal $\sigma$-algebra generated by $\{f^kA_i\mid 1\leq i\leq n, k\geq m\}$.
\end{prop}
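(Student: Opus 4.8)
The plan is to reduce the statement to the completely-positive-entropy characterization of the $K$-property recorded above: it is enough to show that, under the displayed mixing hypothesis, $(X,f,\mu)$ has no non-trivial zero-entropy factor. Since a non-trivial zero-entropy factor pulls back to a non-trivial finite measurable partition of $X$ with zero entropy, it suffices in turn to prove that every finite partition $\xi$ with $h_\mu(f,\xi)=0$ is trivial. I would argue this by contradiction, applying the hypothesis precisely to the family $\{A_i\}$ consisting of the atoms of such a $\xi$.

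The one substantive step is a standard entropy computation: writing the Kolmogorov--Sinai limit ``forward'' in time and using $f$-invariance of $\mu$ gives $h_\mu(f,\xi)=H_\mu\!\big(\xi\mid\bigvee_{k\ge 1}f^{k}\xi\big)$, so $h_\mu(f,\xi)=0$ forces $\xi\subseteq_\mu\bigvee_{k\ge 1}f^{k}\xi$. An easy induction---push this inclusion forward by powers of $f$ and feed it back---then yields $\xi\subseteq_\mu\bigvee_{k\ge m}f^{k}\xi$ for every $m\ge 1$. For the family $\{A_i\}$ equal to the atoms of $\xi$ one has $\s{A}_m=\bigvee_{k\ge m}f^{k}\xi$, so every atom of $\xi$ lies in $\s{A}_m$ for all $m$. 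If $\xi$ were non-trivial we could choose an atom $B$ with $0<\mu(B)<1$ and take $C=B\in\s{A}_m$, whence $\sup_{C\in\s{A}_m}\abs{\mu(B\cap C)-\mu(B)\mu(C)}\ge\mu(B)\bigl(1-\mu(B)\bigr)>0$ for every $m$, contradicting the assumed convergence to $0$ for this choice of $A_1,\dots,A_n$ and $B$. Hence $\xi$ is trivial, $(X,f,\mu)$ has completely positive entropy, and is therefore $K$.

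For completeness I would also record the converse (so as to have the full equivalence used throughout, as in \cite{CFSS}), and this is where the real work sits. The $\s{A}_m$ decrease in $m$, and for fixed $B$ and any $C\in\s{A}_m$ one has $\mu(B\cap C)-\mu(B)\mu(C)=\int_C\bigl(\E(\mathbf 1_B\mid\s{A}_m)-\mu(B)\bigr)\,d\mu$; consequently the supremum in the statement equals $\tfrac12\norm{\E(\mathbf 1_B\mid\s{A}_m)-\mu(B)}_{1}$, which by the reverse martingale convergence theorem tends to $\tfrac12\norm{\E(\mathbf 1_B\mid\bigcap_m\s{A}_m)-\mu(B)}_{1}$. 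Thus everything reduces to knowing that the remote tail $\bigcap_m\s{A}_m$ is $\mu$-trivial for a $K$-system; this is a form of the Rokhlin--Sinai theorem (the remote future $\bigvee_{k\in\ZZ}f^k\xi$ is an $f$-invariant sub-$\sigma$-algebra, hence a $K$-factor in which $\xi$ is a generator, and the remote future of a generator is the Pinsker $\sigma$-algebra of that factor, which is trivial). Establishing---or quoting---this tail-triviality is the main obstacle; the remaining estimates are routine, and I would refer to \cite{CFSS} for the details.
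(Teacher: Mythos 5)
Your argument is correct, but note that the paper does not actually prove this proposition: it is listed among the equivalent characterizations of the $K$-property whose proofs are explicitly omitted, with the reader referred to \cite{CFSS}. So there is nothing internal to compare against; what you have supplied is a self-contained proof of the stated implication. Your forward direction is the standard route and is sound: reduce to completely positive entropy (which the paper records just above as equivalent to $K$), note that a non-trivial zero-entropy factor yields a non-trivial finite partition $\xi$ with $h_\mu(f,\xi)=0$, use invertibility to write $h_\mu(f,\xi)=H_\mu(\xi\mid\bigvee_{k\ge1}f^k\xi)$, and iterate the resulting inclusion to place each atom of $\xi$ inside $\s{A}_m$ for every $m$, which visibly contradicts the uniform mixing hypothesis with $C=B$ an atom of intermediate measure. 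The only caveats are cosmetic: the statement as written asserts only the ``if'' direction, so your converse is optional, and its key step (triviality of the tail $\bigcap_m\s{A}_m$ for a $K$-system) is quoted rather than proved --- that is genuinely the Rokhlin--Sinai content and is exactly what \cite{CFSS} supplies. If this proof were to be inserted into the paper, the forward direction as you wrote it would suffice.
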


\begin{rem}
Although we do not consider the Bernoulli property in this work, we mention this result, as many proofs showing that systems with the $K$-property imply the Bernoulli property make use of this condition. See for instance \cite{CH96, OW98, Rat, LLS}.
\end{rem}

Beyond these equivalent definitions, the $K$-property also implies mixing of all orders, which can be seen from $K$-mixing, as well as continuous Lebesgue spectrum, which can be seen from the original definition of the $K$-property.

To show the $K$-property, we will make use of the following result of Ledrappier.

\begin{thm}[{\cite[Proposition 1.4]{L}}]\label{Ledr}
	Let $(X,f)$ be a weakly expansive (or asymptotically $h$-expansive \cite{Led78}) system, and let $\ph$ be a continuous function on $X$. Let $(X\x X,f\x f)$ be the product of two copies of $(X,f)$ and $\Phi(x_1,x_2) := \ph(x_1) + \ph(x_2)$. If $\Phi$ has a unique equilibrium measure in $\MMM(X\x X,f\x f)$, then the unique equilibrium measure for $\ph$ in $\MMM(X,f)$ has the Kolmogorov property.
\end{thm}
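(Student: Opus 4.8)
The plan is to characterize the $K$-property through triviality of the Pinsker factor and to contradict the uniqueness of the equilibrium state on $X\x X$ by means of a relatively independent self-joining. As preliminaries I would record two facts. First, $P(f\x f,\Phi)=2P(\ph)$: the inequality ``$\geq$'' follows by inserting product measures into the Variational Principle, and ``$\leq$'' from subadditivity of metric entropy under joinings, $h_m(f\x f)\leq h_{m_1}(f)+h_{m_2}(f)$ for the marginals $m_1,m_2$ of an $(f\x f)$-invariant $m$, together with $\int\Phi\,dm=\int\ph\,dm_1+\int\ph\,dm_2$. Second, $\ph$ has a unique equilibrium state: the expansivity hypothesis passes to $(X\x X,f\x f)$, so the entropy map there is upper semicontinuous and equilibrium states exist; and if $\mu_1,\mu_2$ are equilibrium states for $\ph$, then $\mu_1\x\mu_1$, $\mu_2\x\mu_2$ and $\mu_1\x\mu_2$ all achieve the pressure $2P(\ph)=P(\Phi)$ for $\Phi$, so are equilibrium states for $\Phi$, whence uniqueness for $\Phi$ forces $\mu_1\x\mu_1=\mu_1\x\mu_2$ and thus $\mu_1=\mu_2$. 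Write $\mu$ for the unique equilibrium state of $\ph$.

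The heart of the argument is the joining construction. The measure $\mu\x\mu$ is an equilibrium state for $\Phi$ (entropy and the integral are additive for products), hence \emph{the} unique one by hypothesis. Suppose toward a contradiction that $\mu$ is not $K$; then its Pinsker factor $\pi\colon(X,f,\mu)\to(Y,g,\nu)$ is a nontrivial factor with $h_\nu(g)=0$, so $\nu$ is not a point mass. Disintegrating $\mu=\int_Y\mu_y\,d\nu(y)$ over $\pi$, I would form the relatively independent self-joining $m:=\int_Y(\mu_y\x\mu_y)\,d\nu(y)$ on $X\x X$; it is $(f\x f)$-invariant because $\pi$ is equivariant and $f_\ast\mu_y=\mu_{gy}$, and both its marginals equal $\mu$, so $\int\Phi\,dm=2\int\ph\,d\mu$. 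The addition formula for entropy relative to a factor gives $h_m(f\x f)=h_\nu(g)+h(m\mid Y)$, and since the fibers of $m$ over $Y$ are independent products of the fibers of $\mu$, one has $h(m\mid Y)=2\,h(\mu\mid Y)$; as $h_\mu(f)=h_\nu(g)+h(\mu\mid Y)=h(\mu\mid Y)$ because $h_\nu(g)=0$, this yields $h_m(f\x f)=2h_\mu(f)$, so $m$ is an equilibrium state for $\Phi$. But $m$ is carried by $\{(x_1,x_2):\pi(x_1)=\pi(x_2)\}$, a set of $\mu\x\mu$-measure $\int_Y\nu(\{y\})\,d\nu(y)<1$ since $\nu$ is not a point mass, so $m\neq\mu\x\mu$, contradicting uniqueness for $\Phi$. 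Hence the Pinsker factor is trivial and $\mu$ has the $K$-property.

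The step I expect to be the main obstacle is the relative-entropy bookkeeping: realizing the Pinsker factor as a genuine measure-preserving system so that the disintegration of $\mu$ and the relative addition formula for entropy are legitimately available, verifying additivity of relative fiber entropy for the relatively independent joining, and confirming that a nontrivial Pinsker factor indeed forces $m\neq\mu\x\mu$. Everything else --- additivity of entropy and pressure for products, invariance of the joining, and existence of equilibrium states --- is soft, and the expansivity hypothesis is used only indirectly, through upper semicontinuity of the entropy map on $X\x X$.
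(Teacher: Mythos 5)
Your proposal is correct and follows essentially the same route as the paper's own proof (given there as Theorem~\ref{NewLedr}, the strengthened version without the weak-expansivity hypothesis): construct the relatively independent self-joining $m$ of $\mu$ over the Pinsker factor, use Abramov--Rokhlin to compute $h_m(f\times f)=2h_\mu(f)$ (the paper does this via the Downarowicz--Serafin partition-wise fiber-entropy formula), and contradict uniqueness of the equilibrium state for $\Phi$. You are somewhat more explicit on two small points the paper leaves implicit --- deriving uniqueness of the equilibrium state for $\ph$ from uniqueness for $\Phi$, and verifying $m\neq\mu\times\mu$ via the measure of $\{\pi(x_1)=\pi(x_2)\}$ --- and you correctly observe, as the paper also does in passing to Theorem~\ref{NewLedr}, that the expansivity hypothesis is only used for existence of equilibrium states and is therefore dispensable here.
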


\begin{rem}
In \cite[Proposition 2.7]{CaT}, the author and Thompson show that this result holds for flows as well.
\end{rem}

In \S \ref{Expansivity}, we will show that the weak expansivity condition can be removed from the statement, without changing the result.

Finally, we note that unlike other mixing conditions, the $K$-property is necessarily limited to invertible systems. There is a one-sided analogue, called \emph{exact}, in that the natural extension of any exact system has the $K$-property (see \cite{Rokh}). As the results of \cite{CT} are not limited to invertible systems, it would be interesting to study this further.

\subsection{Fiber Entropy and Joinings}
We introduce material used in \S \ref{Expansivity}, specifically fiber entropy, as well as joinings and disintegrations of measures. For more details and background, we refer the reader to the books \cite{downarowicz,Furstenberg}.

\subsubsection{Fiber Entropy}
We briefly recall the standard definitions of measure-theoretic entropy. The entropy of a finite partition $\xi$ with respect to a probability measure $\mu$ is defined as
$$H_\mu(\xi) := -\sum_{A\in \xi} \mu(A)\log(\mu(A))$$
and the conditional entropy of $\xi$ with respect to another finite partition $\eta$ is
$$H_\mu(\xi\mid \eta) := -\sum_{A\in\xi,B\in\eta} \mu(A\cap B)\log\left(\frac{\mu(A\cap B)}{\mu(B)}\right).$$
Then, introducing the dynamics, we define the entropy of a finite partition with respect to a transformation as
$$h_\mu(\xi;f) := \lim\limits_{n\to\infty}H_\mu\left(\xi\mid \bigvee_{i=0}^{n-1}f^{-i}\xi\right).$$
Finally, the entropy of a transformation is
$$h_\mu(f) := \sup_{\xi} h_\mu(\xi;f).$$

Computing this can be non-trivial in many cases, and for our proof of Proposition \ref{Ledr}, we will make use of a concept called fiber entropy, which was first introduced by Abramov and Rokhlin in \cite{AbrRo}, and later generalized by Downarowicz and Serafin in \cite{DownSer}. We first recall the definition of the disintegration of a measure over a factor.

\begin{defn}
	Let $(Y,\s{A},g,\nu)$ be a factor of $(X,\s{B},f,\mu)$ via the factor map $\pi : X\to Y$. The disintegration of $\mu$ over $Y$ is a collection of probability measures $\{\mu_y\}_{y\in Y}$ such that $\mu_y$ is fully supported on $\pi^{-1}(y)$ for $\nu$-a.e. $y\in Y$, and for all $A\in\s{B}$, 
	$$\mu(A) = \int_Y \mu_y(A)\,d\nu(y).$$
\end{defn}

\begin{rem}
Any two disintegrations agree for $\nu$-a.e. $y\in Y$. Therefore, defining the disintegration of a measure over a factor is equivalent to defining the measure.
\end{rem}

\begin{defn}
Let $(Y,g,\nu)$ be a factor of $(X,f,\mu)$, and let $\xi$ be a partition of $X$ with finite entropy. Then for $\nu$-a.e. $y\in Y$, the \emph{fiber entropy} of $\xi$ with respect to $\nu$ is defined as
$$H_\mu(\xi\mid y) := \lim\limits_{n\to\infty}H_{\mu_y}\left(\xi\mid \bigvee_{i=0}^{n-1}f^{-i}\xi\right),$$
where $\{\mu_y\}$ is the disintegration of $\mu$ over $Y$.
\end{defn}

Using this, Downarowicz and Serafin generalized the Abramov-Rokhlin formula to hold for individual partitions.
\begin{prop}[{\cite[Theorem 1]{DownSer}}]\label{thm: fiber entropy}
If $(Y,g,\nu)$ is an invertible factor of $(X,f,\mu)$, then for every finite partition $\xi$ with finite entropy,
$$h_\mu(\xi\mid \pi^{-1}(\s{A})) = \int H_\mu(\xi\mid y)\,d\nu.$$
\end{prop}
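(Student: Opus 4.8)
The plan is to reduce the identity to an interchange of the $\nu$-integral with a limit, exploiting that conditioning on any $\sigma$-algebra containing $\pi^{-1}(\s A)$ disintegrates fiber by fiber. I would set $\s F := \pi^{-1}(\s A)$; since $g$ is invertible we have $f^{-1}\s F = \pi^{-1}(g^{-1}\s A) = \s F$, so $\s F$ is an $f$-invariant sub-$\sigma$-algebra, and the relative Kolmogorov--Sinai entropy then obeys the usual identity
$$h_\mu(\xi\mid\s F) = \lim_{n\to\infty}H_\mu\Big(\xi\;\Big|\;\bigvee_{i=1}^{n}f^{-i}\xi\vee\s F\Big),$$
obtained by telescoping $H_\mu(\bigvee_{i=0}^{n-1}f^{-i}\xi\mid\s F)$, shifting by powers of $f$ (legitimate because both $\mu$ and $\s F$ are $f$-invariant), and Ces\`aro averaging the resulting nonincreasing sequence. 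I would likewise read the fiber entropy as the nonincreasing limit $H_\mu(\xi\mid y) = \lim_{n\to\infty}H_{\mu_y}(\xi\mid\bigvee_{i=1}^{n}f^{-i}\xi)$, which exists since the conditioning algebra grows with $n$.

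The heart of the argument is the fiberwise disintegration of conditional entropy, for which I would invoke two standard facts about the disintegration $\{\mu_y\}$ of $\mu$ over $Y$, both valid in the compact metric (standard Borel) setting. First, conditional expectations with respect to any sub-$\sigma$-algebra $\s G\supseteq\s F$ agree, $\mu$-almost everywhere, whether computed against $\mu$ or against the conditional measure $\mu_{\pi(x)}$; consequently $H_\mu(\alpha\mid\s G) = \int_Y H_{\mu_y}(\alpha\mid\s G)\,d\nu(y)$ for every finite partition $\alpha$. Second, for $\nu$-a.e.\ $y$ the $\sigma$-algebra $\s F$ is $\mu_y$-trivial --- $\mu_y$ is carried by the fiber $\pi^{-1}(y)$, on which each element of $\s F$ is, modulo $\mu_y$, either the whole fiber or empty --- so that $H_{\mu_y}(\alpha\mid\s H\vee\s F) = H_{\mu_y}(\alpha\mid\s H)$ for every $\s H$. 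Applying the first fact with $\s G = \bigvee_{i=1}^{n}f^{-i}\xi\vee\s F$ and then the second with $\s H = \bigvee_{i=1}^{n}f^{-i}\xi$ gives, for each $n$,
$$H_\mu\Big(\xi\;\Big|\;\bigvee_{i=1}^{n}f^{-i}\xi\vee\s F\Big) = \int_Y H_{\mu_y}\Big(\xi\;\Big|\;\bigvee_{i=1}^{n}f^{-i}\xi\Big)\,d\nu(y).$$
Letting $n\to\infty$, the left side converges to $h_\mu(\xi\mid\s F)$, while on the right the integrand is, for fixed $y$, nonincreasing in $n$ with limit $H_\mu(\xi\mid y)$ and bounded above by $\log\#\xi$; bounded convergence then gives $\int_Y H_\mu(\xi\mid y)\,d\nu$, which is the claimed formula.

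I expect the only genuine obstacle to be the measure-theoretic bookkeeping underlying the two facts above: the existence of $y\mapsto\mu_y$ as a measurable family, its compatibility with further conditioning, and the $\mu_y$-triviality of $\pi^{-1}(\s A)$ for $\nu$-a.e.\ $y$, together with the joint measurability of the associated information functions needed to justify the disintegration of $H_\mu(\alpha\mid\s G)$. In the compact metric setting these are classical and may simply be cited, but in the non-metrizable generality of \cite{DownSer} they are precisely what makes the statement substantial. A minor secondary point is the relative Kolmogorov--Sinai identity used in the first step, whose proof relies on the $f$-invariance of $\pi^{-1}(\s A)$ --- and that invariance is exactly where the hypothesis that the factor $(Y,g,\nu)$ is invertible enters.
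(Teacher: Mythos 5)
This statement appears in the paper only as a citation (Theorem~1 of Downarowicz--Serafin); the paper supplies no proof of its own, so there is nothing to compare line by line. Your sketch is, nonetheless, a correct account of the argument that one would give. The two ``facts'' you isolate are the right ones: (i) for any sub-$\sigma$-algebra $\s G\supseteq\s F:=\pi^{-1}(\s A)$, the disintegration $\{\mu_y\}$ over $\s F$ refines to a disintegration over $\s G$, so that $\E_\mu[\,\cdot\mid\s G]=\E_{\mu_{\pi(x)}}[\,\cdot\mid\s G]$ a.e.\ and hence $H_\mu(\alpha\mid\s G)=\int H_{\mu_y}(\alpha\mid\s G)\,d\nu$; and (ii) $\s F$ is $\mu_y$-trivial for $\nu$-a.e.\ $y$ because $\mu_y$ lives on the fiber $\pi^{-1}(y)$, on which every set of $\s F$ is either null or conull. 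Combining these and passing to the limit in $n$ (justified by monotonicity in $n$ and the bound $\log\#\xi$) gives the claimed identity. You also put your finger on exactly where invertibility of the factor is used: it forces $f^{-1}\s F=\s F$, without which the telescoping shift in the relative Kolmogorov--Sinai identity
$$h_\mu(\xi\mid\s F)=\lim_{n\to\infty}H_\mu\Bigl(\xi\;\Big|\;\bigvee_{i=1}^{n}f^{-i}\xi\vee\s F\Bigr)$$
only gives an inequality. One small remark: the paper's displayed definitions of $h_\mu(\xi;f)$ and of $H_\mu(\xi\mid y)$ read $\bigvee_{i=0}^{n-1}f^{-i}\xi$ in the conditioning, which taken literally makes the conditional entropy identically zero; you have silently corrected this to $\bigvee_{i=1}^{n}f^{-i}\xi$, which is the intended reading. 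Your deferral of the remaining measurability and disintegration bookkeeping is appropriate --- in the compact metric (standard Borel) setting of this paper those are classical, and as you note, it is precisely the removal of such hypotheses that constitutes the content of the Downarowicz--Serafin paper being cited.
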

Taking the supremum over all $\xi$, the classical Abramov-Rokhlin formula is obtained.

\subsubsection{Joinings}
\begin{defn}
	Given two dynamical systems $(X,\s{A},f,\mu)$ and $(Y,\s{B},g,\nu)$, a probability measure $\lambda \in \MMM(X\x Y, f\times g)$ is a \emph{joining} of $\mu$ and $\nu$ if the marginals of $\lambda$ are $\mu$ and $\nu$ respectively. In other words, for any $A\in\s{A}$ and $B\in\s{B}$, $\lambda(A\x Y) = \mu(A)$ and $\lambda(X\x B) = \nu(B)$.
\end{defn}

For any two dynamical systems, there is always the \emph{independent joining}, which is just the product measure. When $X$ and $Y$ share a common factor, we can define the \emph{relatively independent joining} over this factor.

\begin{defn}
	Suppose that $(Z, \s{C},h,\rho)$ is a factor of both $(X,\s{A},f,\mu)$ and $(Y,\s{B},g,\nu)$. Let $\{\mu_z\}$ and $\{\nu_z\}$ be the disintegrations of $\mu$ and $\nu$ over $Z$. Then the relatively independent joining is defined by
	$$\mu \otimes_Z \nu (A) := \int_Z \mu_z\x\nu_z(A) \,d\rho(z).$$
\end{defn}

We will use one particularly nice feature of joinings in this paper, specifically, that entropy cannot increase beyond the sum of the marginals. The proof is an easy exercise.

\begin{prop}[{\cite[Fact 4.4.3]{downarowicz}}]\label{prop: joining entropy}
Let $\lambda$ be a joining of $\mu$ and $\nu$. Then
$$h_\lambda(f\x g) \leq h_\mu(f) + h_\nu(g).$$
\end{prop}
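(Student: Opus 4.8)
The plan is to exploit that both $(X,f,\mu)$ and $(Y,g,\nu)$ are factors of the joined system $(X\x Y,f\x g,\lambda)$ via the coordinate projections, and then to invoke subadditivity of measure-theoretic entropy over joins of partitions. Write $\pi_X : X\x Y \to X$ and $\pi_Y : X\x Y \to Y$ for the projections; since the marginals of $\lambda$ are $\mu$ and $\nu$, these are factor maps. First I would reduce to finite partitions: $h_\lambda(f\x g) = \sup_\zeta h_\lambda(f\x g,\zeta)$, the supremum taken over finite measurable partitions $\zeta$ of $X\x Y$.

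Fix such a $\zeta$ and $\eps > 0$. Finite rectangle partitions form a directed family (under common refinement) generating $\s{A}\otimes\s{B}$, so there are finite partitions $\xi$ of $X$ and $\eta$ of $Y$ such that, writing $\xi_X := \pi_X^{-1}\xi$ and $\eta_Y := \pi_Y^{-1}\eta$ (so $\xi_X \vee \eta_Y$ is the partition of $X\x Y$ into rectangles $A\x B$ with $A\in\xi$, $B\in\eta$), one has $H_\lambda(\zeta \mid \xi_X \vee \eta_Y) < \eps$. The standard estimate $h_\mu(T,\alpha) \le h_\mu(T,\beta) + H_\mu(\alpha\mid\beta)$ together with subadditivity of $h_\mu(T,\cdot)$ over joins then gives
$$h_\lambda(f\x g,\zeta) \le h_\lambda(f\x g, \xi_X\vee\eta_Y) + \eps \le h_\lambda(f\x g,\xi_X) + h_\lambda(f\x g,\eta_Y) + \eps.$$
Finally I would observe that $h_\lambda(f\x g,\xi_X) = h_\mu(f,\xi)$: indeed $\bigvee_{i=0}^{n-1}(f\x g)^{-i}\xi_X = \pi_X^{-1}\big(\bigvee_{i=0}^{n-1}f^{-i}\xi\big)$, and $\lambda\circ\pi_X^{-1} = \mu$, so every entropy and conditional entropy appearing in the definition of $h_\lambda(f\x g,\xi_X)$ equals the corresponding one for $h_\mu(f,\xi)$; symmetrically $h_\lambda(f\x g,\eta_Y) = h_\nu(g,\eta)$. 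Hence $h_\lambda(f\x g,\zeta) \le h_\mu(f) + h_\nu(g) + \eps$, and letting $\eps\to 0$ and taking the supremum over $\zeta$ finishes the argument.

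Since the statement is flagged as an easy exercise, there is no serious obstacle; the one place deserving care is the approximation step — that an arbitrary finite partition of the product is approximated in conditional entropy by a rectangle partition — together with the bookkeeping observation that a partition pulled back from a single coordinate has exactly the entropy of the corresponding marginal. An alternative, slightly slicker packaging works directly with $\sigma$-algebras: since $\pi_X^{-1}\s{A} \vee \pi_Y^{-1}\s{B} = \s{A}\otimes\s{B}$, the chain rule for entropy gives $h_\lambda(f\x g) = h_\lambda(f\x g, \pi_X^{-1}\s{A}) + h_\lambda\big(f\x g, \pi_Y^{-1}\s{B} \mid \pi_X^{-1}\s{A}\big) \le h_\mu(f) + h_\nu(g)$, bounding the conditional term by the unconditional one and using that the entropy of a factor's pullback $\sigma$-algebra is the entropy of the factor; but the partition-level argument above is the most self-contained.
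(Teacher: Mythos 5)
Your argument is correct. The paper does not actually prove this proposition — it cites Fact 4.4.3 of Downarowicz's book and remarks that the proof is an easy exercise — so there is no in-paper argument to compare against; your proof (approximating a finite partition $\zeta$ of $X\x Y$ in conditional entropy by rectangle partitions $\xi_X\vee\eta_Y$, invoking $h_\lambda(f\x g,\alpha)\le h_\lambda(f\x g,\beta)+H_\lambda(\alpha\mid\beta)$ and subadditivity over joins, and using that a pullback partition has the entropy of the corresponding marginal, $h_\lambda(f\x g,\pi_X^{-1}\xi)=h_\mu(f,\xi)$) is exactly the standard route. The only step worth flagging is the approximation: one needs that the directed family of finite rectangle partitions generates $\s{A}\otimes\s{B}$ modulo $\lambda$-null sets so that $\inf H_\lambda(\zeta\mid\xi_X\vee\eta_Y)=0$ by martingale convergence, which holds here; the alternative $\sigma$-algebra packaging via the chain rule is also valid and arguably cleaner if one is comfortable with entropy relative to invariant sub-$\sigma$-algebras.
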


This result can be shown to propagate to pressure as well, showing that if a joining is an equilibrium state for an appropriate potential, it must be a joining of equilibrium states. While easy to prove, this is key to multiple results throughout this paper.
\begin{prop}\label{prop: projection of ES}
	Let $(X_i,f_i,\mu_i)$ be two dynamical systems, and $\ph_i$ continuous potentials on $X_i$. Then, set $\Phi(x,y) = \ph_1(x) + \ph_2(y)$. For any joining $\lambda$ of $\mu_1$ and $\mu_2$,
	$$P_\lambda(\Phi) \leq P_{\mu_1}(\ph_1) + P_{\mu_2}(\ph_2).$$
	In particular, if $\lambda$ is an equilibrium state, then $\mu_i$ is an equilibrium state for $(X_i,f_i,\ph_i)$.
\end{prop}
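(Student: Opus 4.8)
The plan is to bound $P_\lambda(\Phi)$ by splitting the integral of the potential along the marginals and controlling the entropy term with Proposition \ref{prop: joining entropy}. First I would write, using the definition of the measure-theoretic pressure and linearity of the integral,
$$P_\lambda(\Phi) = h_\lambda(f_1\times f_2) + \int \Phi\,d\lambda = h_\lambda(f_1\times f_2) + \int \ph_1\,d\lambda_1 + \int \ph_2\,d\lambda_2,$$
where $\lambda_1$ and $\lambda_2$ denote the two marginals, which by the joining hypothesis are exactly $\mu_1$ and $\mu_2$. Thus $\int \Phi\,d\lambda = \int\ph_1\,d\mu_1 + \int\ph_2\,d\mu_2$, and it remains only to absorb the entropy.

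For the entropy term, I would invoke Proposition \ref{prop: joining entropy} directly: since $\lambda$ is a joining of $\mu_1$ and $\mu_2$, we have $h_\lambda(f_1\times f_2) \leq h_{\mu_1}(f_1) + h_{\mu_2}(f_2)$. Combining this with the previous display gives
$$P_\lambda(\Phi) \leq \big(h_{\mu_1}(f_1) + \textstyle\int\ph_1\,d\mu_1\big) + \big(h_{\mu_2}(f_2) + \int\ph_2\,d\mu_2\big) = P_{\mu_1}(\ph_1) + P_{\mu_2}(\ph_2),$$
which is the desired inequality.

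For the ``in particular'' clause, suppose $\lambda$ is an equilibrium state for $(X_1\times X_2, f_1\times f_2, \Phi)$. By the Variational Principle (Proposition \ref{prop: var prin}) applied on the product, $P_\lambda(\Phi) = P(\Phi)$, and one checks easily that $P(\Phi) = P(\ph_1) + P(\ph_2)$ — the $\leq$ direction from the inequality just proved applied to an arbitrary equilibrium state viewed as a joining of its marginals (or more simply from the subadditivity of topological pressure under products), and the $\geq$ direction by testing against a product of equilibrium states for $\ph_1$ and $\ph_2$. Hence the chain
$$P(\ph_1) + P(\ph_2) = P_\lambda(\Phi) \leq P_{\mu_1}(\ph_1) + P_{\mu_2}(\ph_2) \leq P(\ph_1) + P(\ph_2)$$
forces equality throughout. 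Since $P_{\mu_i}(\ph_i) \leq P(\ph_i)$ for each $i$, equality of the sums forces $P_{\mu_i}(\ph_i) = P(\ph_i)$ for both $i$, i.e.\ each $\mu_i$ is an equilibrium state for $(X_i, f_i, \ph_i)$.

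This argument is almost entirely formal; the only point requiring a small amount of care is the identity $P(\Phi) = P(\ph_1) + P(\ph_2)$ for the topological pressures on the product, and the observation that the marginals of $\lambda$ are $f_i$-invariant probability measures (so that $P_{\mu_i}(\ph_i)$ is a legitimate term bounded above by $P(\ph_i)$) — both of which are standard. I expect no genuine obstacle here; the content of the proposition is simply the packaging of Proposition \ref{prop: joining entropy} together with the additive splitting of $\Phi$.
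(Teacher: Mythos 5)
Your argument is correct and follows essentially the same route as the paper's proof: bound the entropy via Proposition \ref{prop: joining entropy}, split $\int\Phi\,d\lambda$ along the marginals, and then squeeze with the chain $P(\ph_1)+P(\ph_2)=P_\lambda(\Phi)\leq P_{\mu_1}(\ph_1)+P_{\mu_2}(\ph_2)\leq P(\ph_1)+P(\ph_2)$. The only cosmetic difference is that the paper simply cites \cite[Theorem 9.8]{Wa} for the product formula $P(\Phi)=P(\ph_1)+P(\ph_2)$, where you sketch its proof.
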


\begin{proof}
By Proposition \ref{prop: joining entropy}, $h_\lambda(f_1\x f_2) \leq h_{\mu_1}(f_1) + h_{\mu_2}(f_2)$. As $\Phi$ acts independently on each coordinate, we have that $\int\Phi\,d\mu = \int\ph_1\,d\mu_1 + \int\ph_2\,d\mu_2$, completing the proof of the inequality. If $\lambda$ is an equilibrium state for $\Phi$, then
$$P(\ph_1) + P(\ph_2) = P(\Phi) = P_\lambda(\Phi) \leq P_{\mu_1}(\ph_1) + P_{\mu_2}(\ph_2) \leq P(\ph_1) + P(\ph_2),$$
where the first equality can be found in \cite[Theorem 9.8]{Wa} and the last inequality is by the Variational Principle. Consequently, we see that $\mu_1$ and $\mu_2$ are both equilibrium states.
\end{proof}

\subsection{Orbit Decompositions}

In \cite{CT}, Climenhaga and Thompson introduced the idea of decomposing orbit segments as a tool to show uniqueness of equilibrium states. We represent the space of orbit segments as $X\x [0,\infty)$, with $(x,t)$ identified with the orbit segment $\{f_sx\mid s\in [0,t]\}$. In this section, we recall the formal definition of an orbit decomposition, as well as the ways that pressure, specification, and the Bowen property are adapted to this setting.

\begin{defn}
	Let $\FFF$ be a continuous flow on a compact metric space $X$. A decomposition of $X\x [0,\infty)$ is a trio $(\PPP,\GGG,\SSS)$ such that there exist functions $p,g,s : X\x [0,\infty)\to [0,\infty)$ so that for any orbit segment $(x,t)$, writing $p := p((x,t))$ and similarly for $g$ and $s$, we have $t = p+ g + s$ and
	$$(x,p)\in\PPP,\quad (f_px,g)\in\GGG,\quad (f_{p+g}x,s)\in\SSS.$$
\end{defn}
Informally, this should be thought of breaking down an orbit segment into three parts, consisting of a ``prefix'', ``suffix'', and ``good'' part, where the collection of bad orbit segments (the prefix and the suffix) have less pressure than the full space, and the dynamics and potential exhibit good behavior on $\GGG$. In \cite{CaT}, a specific class of decompositions was introduced, termed $\lambda$-decompositions, and this is the class that we will study in this paper.

\begin{defn}
	Let $\lambda : X\to [0,\infty)$ be bounded and lower semicontinuous. Then, for all $\eta > 0$, define
	$$B(\eta) := \left\{(x,t)\mid \frac{1}{t}\int_{0}^{t}\lambda(f_sx)\,ds < \eta\right\}$$
	to be the class of ``bad'' orbit segments at scale $\eta$ and
	$$\GGG(\eta) := \left\{(x,t)\mid \frac{1}{r}\int_{0}^{r}\lambda(f_sx)\,ds \geq \eta \text{ and } \frac{1}{r}\int_{t-r}^{t}\lambda(f_sx)\,ds \geq \eta \text{ for } r\in (0,t]\right\}$$
	to be the class of ``good'' orbit segments at scale $\eta$. From here, the decomposition of a given segment $(x,t)$ is defined by taking the prefix to be the largest initial segment in $B(\eta)$, the suffix to be the largest remaining terminal segment in $B(\eta)$, and the good part is the remaining segment, which lies in $\GGG(\eta)$.
\end{defn}

Not every application of \cite{CT} has been a $\lambda$-decomposition, but many are able to be studied using this theory. For more references, see \cite{BCFT,CKP,CFT2,CFT,Wa19}. We make use of them in this paper as they behave well for the necessary pressure estimates, and they also induce natural decompositions in the product space via $\tilde{\lambda}(x,y) = \lambda(x)\lambda(y)$, both of which allow us to make use of Theorem \ref{NewLedr}.

\subsubsection{Pressure}
For any collection of orbit segments $\CCC$, we are able to define its topological pressure, which will be denoted $P(\CCC,\ph)$. In particular, if $\CCC = A\x [0,\infty)$, then $P(\CCC,\ph)$ is precisely the \emph{upper-capacity pressure} as defined in \cite{Pes}, and will be written $P(A,\ph)$. For any $\eps > 0$ and $t > 0$, we say that a set $A$ is \emph{$(t,\eps)$-separated} if given $x,y\in A$, there exists $s\in [0,t]$ such that $d(f_sx,f_sy) \geq \eps$. Then, we define
$$\Lambda_t(\CCC,\ph,\eps) = \sup\left\{\sum_{x\in E_t}e^{\int_{0}^{t}\ph(f_sx)\,ds}\mid E_t\subset\CCC_t \text{ is } (t,\eps)\text{-seperated}\right\}$$
where $\CCC_t = \{x\mid (x,t)\in\CCC\}$ is the set of orbit segments in $\CCC$ of length exactly $t$. With this, we define the \emph{pressure of $\CCC$ at scale $\eps$} by
$$P(\CCC,\ph,\eps) = \limsup_{t\to\infty}\frac{1}{t}\log\Lambda_t(\CCC,\ph,\eps)$$
and the \emph{pressure of $\CCC$} as
$$P(\CCC,\ph) = \lim\limits_{\eps\to 0}P(\CCC,\ph,\eps).$$
When $\ph = 0$ and $A\subset X$, we will write $P(A,0,\eps) = h(A,\eps)$, as this is the topological entropy of $A$. In \cite[Theorem 3.6]{CaT}, the problem of computing the pressure of $B(\eta)$ was simplified in the setting when the entropy map is upper semicontinuous (see Proposition \ref{prop: pressure decrease}). In \S \ref{Pressure} we simplify this further to studying the pressure of a particular compact, invariant set.

\subsubsection{Specification and the Bowen property}
We now briefly discuss the nonuniform versions of the Bowen property and specification that are used in this paper. Let $\CCC\subset X\x [0,\infty)$ be a collection of orbit segments.
\begin{defn}
We say that $\ph$ has the Bowen property on $\CCC$ at scale $\eps > 0$ if
$$\sup_{(x,t)\in\CCC}\sup \left\{\abs{\int_{0}^{t}\ph(f_sx) - \ph(f_sy)\,ds} : d(f_sx,f_sy) \leq \eps \text{ for } s\in [0,t]\right\} < \infty.$$
\end{defn}

\begin{defn}
$\CCC$ has specification at scale $\delta > 0$ if there exists $\tau > 0$ such that for any finite collection of orbit segments $\{(x_i,t_i)\}_{i=1}^n \subset \CCC$, there exists $y\in X$ that $\delta$-shadows each successive orbit segment with gaps of length $\tau$ between each segment. In other words, for all $i$,
$$d\left(f_{s}x_i, f_{s+\sum_{j=1}^{i-1}(\tau + t_i)}y\right) \leq \delta \text{ for } s\in [0,t_i].$$
We say that $\CCC$ has specification if it has specification at scale $\delta$ for all $\delta > 0$.
\end{defn}

These definitions hold in the discrete-time setting as well, with the obvious changes from $t\in [0,\infty)$ to $n\in\N$.

\subsubsection{Expansivity}
We recall a weakening of expansivity introduced in \cite{CT}. While this notion holds in both flows and discrete-time settings, there are notable complications that arise in the flow case.

\begin{defn}
For all $\eps > 0$, define $\Gamma_\eps(x) := \{y\mid d(f_tx,f_ty)\leq \eps\text{ for all }t\in\R\}$ to be the set of points that $\eps$-shadow $x$ for all time. Then define the set of \emph{non-expansive points at scale $\eps$} as
$$\NE(\eps;\FFF) := \{x\in X\mid \Gamma_\eps(x)\not\subset f_{[-s,s]}(x) \text{ for all } s > 0\}$$
where $f_{[-s,s]}(x) = \{y\mid y = f_tx \text{ for some } t\in [-s,s]\}$.
\end{defn}

In contrast, for a homeomorphism $f : X\to X$, we define
$$\NE(\eps;f) = \{x\mid \Gamma_\eps(x)\neq\{x\}\}.$$

In both cases, we then define the \emph{pressure of obstructions to expansivity} as
$$P_{\exp}^\perp(\ph) = \lim\limits_{\eps\downarrow 0}\sup_{\mu}\{P_\mu(\ph)\mid \mu(\NE(\eps)) = 1\}.$$
Observe that this definition is the same in both the discrete-time and flow settings, with only the definition of the non-expansive set changing.

\subsubsection{Conditions for Uniqueness}
We can now state the following result of Climenhaga and Thompson which guarantees the existence of a unique equilibrium state. Rather then present the result in full generality, we phrase it using the terminology and results related to $\lambda$-decompositions, and with the additional assumption of upper semicontinuity of the entropy map. This serves as an illustration of how they can be used to prove uniqueness of equilibrium states, even when proving the $K$-property is beyond the scope of the techniques in this paper.
\begin{thm}[\cite{CT}]\label{thm: CT orig}
Let $\FFF$ be a continuous flow on a compact metric space $X$, and let $\ph : X\to\R$ be continuous. Then, if $P_{\exp}^\perp(\ph) < P(\ph)$ and  $\lambda$ gives rise to a $\lambda$-decomposition such that
\begin{enumerate}
	\item $\GGG(\eta)$ has (possibly weak) specification for all $\eta > 0$;
	\item $\ph$ has the Bowen property on $\GGG(\eta)$ for all $\eta > 0$;
	\item $\sup_\mu\{P_\mu(\ph) \mid \int\lambda\,d\mu = 0\} < P(\ph)$;
\end{enumerate}
and if the entropy map is upper semicontinuous, $(X,\FFF,\ph)$ has a unique equilibrium state.
\end{thm}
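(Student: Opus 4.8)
The plan is to deduce this from the general orbit-decomposition theorem of Climenhaga and Thompson \cite{CT}. Their abstract result takes as input a decomposition $(\PPP,\GGG,\SSS)$ of $X\x[0,\infty)$ and produces a unique equilibrium state for $(X,\FFF,\ph)$ provided: (i) $\GGG$ has (possibly weak) specification at every scale; (ii) $\ph$ has the Bowen property on $\GGG$; (iii) the pressure of the prefix and suffix collections is strictly below $P(\ph)$; and (iv) $P_{\exp}^\perp(\ph)<P(\ph)$. For each fixed $\eta>0$, the $\lambda$-decomposition $(\PPP,\GGG(\eta),\SSS)$ associated to $\lambda$ is a decomposition in this sense --- the structural facts needed to feed a $\lambda$-decomposition into the Climenhaga--Thompson machinery are established in \cite{CaT}. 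Hypotheses (1) and (2), together with the assumption $P_{\exp}^\perp(\ph)<P(\ph)$, supply (i), (ii) and (iv) for every $\eta$, so the only point requiring argument is to locate a single scale $\eta>0$ at which (iii) holds.

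For this, observe that by construction every prefix and every suffix lies in $B(\eta)$, so it suffices to find $\eta>0$ with $P(B(\eta),\ph)<P(\ph)$. Suppose not; since $P(B(\eta),\ph)\le P(\ph)$ always, this forces $P(B(1/n),\ph)=P(\ph)$ for all $n\in\N$. For each $n$ choose $t_n\to\infty$, $\eps_n\downarrow 0$, and $(t_n,\eps_n)$-separated sets $E_n\subset (B(1/n))_{t_n}$ with $\Lambda_n:=\sum_{x\in E_n}e^{\int_0^{t_n}\ph(f_sx)\,ds}\ge e^{(P(\ph)-1/n)t_n}$ for large $n$, and set $\nu_n:=\Lambda_n^{-1}\sum_{x\in E_n}e^{\int_0^{t_n}\ph(f_sx)\,ds}\cdot\frac{1}{t_n}\int_0^{t_n}\delta_{f_sx}\,ds$. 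Let $\mu$ be a weak-$*$ limit point of $(\nu_n)$. Since each $x\in E_n$ has $(x,t_n)\in B(1/n)$ we get $\int\lambda\,d\nu_n<1/n$, and as $\nu\mapsto\int\lambda\,d\nu$ is lower semicontinuous (because $\lambda$ is) with $\lambda\ge0$, this gives $\int\lambda\,d\mu=0$. On the other hand $\mu$ is $\FFF$-invariant and, because the entropy map is upper semicontinuous, $P_\mu(\ph)\ge P(\ph)$ by the standard construction of equilibrium states --- this is precisely the simplification recorded as \cite[Theorem 3.6]{CaT} (stated below as Proposition \ref{prop: pressure decrease}). Thus $\mu$ satisfies $\int\lambda\,d\mu=0$ and $P_\mu(\ph)\ge P(\ph)$, contradicting hypothesis (3). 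Hence some $\eta>0$ has $P(B(\eta),\ph)<P(\ph)$.

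Fixing such an $\eta$, we now have specification and the Bowen property on $\GGG(\eta)$, the bound $P(\PPP,\ph),P(\SSS,\ph)\le P(B(\eta),\ph)<P(\ph)$, and $P_{\exp}^\perp(\ph)<P(\ph)$; the Climenhaga--Thompson theorem then produces the unique equilibrium state for $(X,\FFF,\ph)$. The one genuinely nontrivial step is the pressure estimate of the previous paragraph, i.e.\ upgrading the measure-level gap in hypothesis (3) to the collection-level gap $P(B(\eta),\ph)<P(\ph)$ that the decomposition framework requires; upper semicontinuity of the entropy map is exactly what makes the pressure lower bound survive the passage to a weak-$*$ limit, which is why that assumption is included. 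Everything else --- that a $\lambda$-decomposition is a decomposition in the sense of \cite{CT}, that the specification and Bowen hypotheses transfer verbatim, and the internal bookkeeping of the Climenhaga--Thompson argument --- is routine given \cite{CT} and \cite{CaT}.
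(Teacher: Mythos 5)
Your proposal matches the paper's route: the theorem is the abstract Climenhaga--Thompson result applied with the $\lambda$-decomposition machinery, and the single point needing argument --- converting the measure-level gap in hypothesis (3) plus upper semicontinuity of the entropy map into a collection-level gap $P(B(\eta),\ph)<P(\ph)$ for some $\eta$ --- is exactly the content of \cite[Theorem 3.6]{CaT}, stated here as Proposition \ref{prop: pressure decrease}, which the paper simply cites. Your inline sketch of that pressure estimate is a reasonable reconstruction, though the diagonal choice $\eps_n\downarrow 0$ together with $t_n\to\infty$ glosses over the genuine double limit (first $t\to\infty$ at fixed $\eps$, then $\eps\to 0$, with upper semicontinuity invoked at the second stage) that the cited proposition handles; since you anchor the step to that proposition anyway, this is a presentation wrinkle rather than a gap.
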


We now recall the abstract statement that was shown in \cite{CaT} for the $K$-property.

\begin{thm}[\cite{CaT}]\label{general}
Let $(X,\FFF,\ph)$ be as in Theorem \ref{thm: CT orig}, and suppose that $\GGG(\eta)$ has strong specification for all $\eta > 0$. Furthermore, suppose that $\FFF$ is entropy expansive, and that every equilibrium measure for $\Phi$ is product expansive for $(X \x X, \FFF \x \FFF)$. Then, writing $\tilde{\lambda}(x,y) = \lambda(x)\lambda(y)$, if
$$\sup_\mu\left\{P_\mu(\Phi)\mid\int\tilde{\lambda}\,d\mu = 0\right\} < P(\Phi)$$
then the unique equilibrium state for $(X,\FFF,\ph)$ has the $K$-property.
\end{thm}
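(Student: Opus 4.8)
The plan is to obtain the $K$-property from the flow version of Ledrappier's criterion, after reducing to a uniqueness statement for the product potential. By Theorem \ref{thm: CT orig} applied to $(X,\FFF,\ph)$, all of whose hypotheses are granted, there is a unique equilibrium state $\mu$ for $\ph$; and since $\FFF$ is entropy expansive, $(X,\FFF)$ is asymptotically $h$-expansive, so the flow analogue of Theorem \ref{Ledr} (namely \cite[Proposition 2.7]{CaT}) applies. Hence it suffices to show that $\Phi$ has a \emph{unique} equilibrium measure for $(X\x X,\FFF\x\FFF)$, and the rest of the argument is aimed at this.

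To establish uniqueness for $\Phi$, I would check that the product system $(X\x X,\FFF\x\FFF,\Phi)$ equipped with the $\tl$-decomposition $(\tPPP,\tGGG,\tSSS)$ satisfies the hypotheses of Theorem \ref{thm: CT orig}, with its expansivity hypothesis $P_{\exp}^\perp(\Phi)<P(\Phi)$ replaced (see the next paragraph) by product expansivity of the equilibrium measures of $\Phi$. Since $\tl(x,y)=\lambda(x)\lambda(y)$ is bounded and lower semicontinuous, it genuinely induces a $\lambda$-decomposition. The one technical lemma is the containment $\tGGG(\eta)\subseteq\GGG(\eta/L)\x\GGG(\eta/L)$ with $L=\sup\lambda<\infty$: bounding one factor of $\lambda(f_sx)\lambda(f_sy)$ by $L$ turns each endpoint-anchored average condition defining $\tGGG(\eta)$ into the corresponding coordinatewise condition at scale $\eta/L$. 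Given this: (i) $\tGGG(\eta)$ inherits specification from the \emph{strong} specification of $\GGG(\eta/L)$ --- for finitely many orbit segments in $\tGGG(\eta)$, shadow the two coordinate families separately with the common gap $\tau=\tau(\eta/L,\delta)$, which is legitimate because the two coordinates of a product orbit segment have the same length; (ii) $\Phi$ has the Bowen property on $\tGGG(\eta)$ since the coordinatewise Bowen bounds for $\ph$ on $\GGG(\eta/L)$ add. For the pressure gap, $\FFF\x\FFF$ is again entropy expansive (topological conditional entropy is subadditive under products, hence zero for $\FFF\x\FFF$), so the entropy map on $X\x X$ is upper semicontinuous, and \cite[Theorem 3.6]{CaT} in the form of Proposition \ref{prop: pressure decrease}, applied to $\tl$, gives $P(\tBBB(\eta),\Phi)\to\sup_\mu\{P_\mu(\Phi)\mid\int\tl\,d\mu=0\}$ as $\eta\downarrow0$; by hypothesis this limit is $<P(\Phi)$, so $P(\tPPP\cup\tSSS,\Phi)\le P(\tBBB(\eta),\Phi)<P(\Phi)$ for small $\eta$.

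The main obstacle is the expansivity input for the product, which is precisely what ``every equilibrium measure for $\Phi$ is product expansive'' provides. One cannot simply invoke $P_{\exp}^\perp(\Phi)<P(\Phi)$: for flows $\Gamma_\eps^{\FFF\x\FFF}(x,y)=\Gamma_\eps^\FFF(x)\x\Gamma_\eps^\FFF(y)$ is two-dimensional whenever the factor flows move nontrivially, while the product-flow orbit of $(x,y)$ is one-dimensional, so almost every point of $X\x X$ lies in $\NE(\eps;\FFF\x\FFF)$ and $P_{\exp}^\perp(\Phi)$ is as large as $P(\Phi)$. The remedy, carried out in \cite{CaT}, is to rerun the uniqueness construction of \cite{CT} for the product --- build a reference measure on $X\x X$ from $\tGGG$ using specification and the Bowen property, and show that any equilibrium state for $\Phi$ coincides with it --- with the expansivity hypothesis replaced by product expansivity of the (a priori finitely many) equilibrium measures of $\Phi$, which quotients out exactly the spurious extra flow direction; the remaining estimates use only the pressure gap verified above. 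Granting this adaptation, $\Phi$ has a unique equilibrium measure, and the flow Ledrappier criterion completes the proof.

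Finally, the structural point that frames the last hypothesis: by Proposition \ref{prop: projection of ES} every $\FFF\x\FFF$-invariant measure is a joining of its two marginals, and if it is an equilibrium state for $\Phi$ then both marginals are equilibrium states for $\ph$, hence equal $\mu$; so every equilibrium measure for $\Phi$ is a self-joining of $\mu$ (in particular $\mu\x\mu$ is one), and this is the class on which product expansivity is imposed.
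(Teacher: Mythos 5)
Your proposal correctly reconstructs the proof strategy of \cite{CaT}, which is what the paper cites here without reproving. You identify the right high-level structure: invoke Theorem \ref{thm: CT orig} in the base for the unique $\mu$, reduce via the flow version of Ledrappier's criterion to uniqueness of the equilibrium state for $\Phi$ on $(X\x X,\FFF\x\FFF)$, and check that the lifted $\tl$-decomposition satisfies the Climenhaga--Thompson hypotheses except for expansivity. Your containment $\tGGG(\eta)\subseteq\GGG(\eta/\norm{\lambda})\x\GGG(\eta/\norm{\lambda})$ is exactly the paper's Proposition \ref{prop: lift to products} (stated there in discrete time), your observation that strong specification is needed to synchronize the two coordinate gluing gaps is the right reason that hypothesis appears, and your use of Proposition \ref{prop: pressure decrease} for the product pressure gap is correct. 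Most importantly, you correctly diagnose the genuine obstruction --- $\NE(\eps;\FFF\x\FFF)=X\x X$ for flows, so $P_{\exp}^\perp(\Phi)<P(\Phi)$ can never hold and the expansivity hypothesis of Theorem \ref{thm: CT orig} must be replaced by product expansivity of equilibrium measures for $\Phi$, which is exactly the role that hypothesis plays.

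The one thing to flag is that you ``grant the adaptation'' of the Climenhaga--Thompson uniqueness machinery to run with product expansivity in place of the usual non-expansive-set pressure gap. That adaptation is the technical heart of the proof in \cite{CaT}, and your proposal leaves it as a black box. Since the theorem is stated here as a citation and the present paper's contribution is to \emph{weaken} its hypotheses (Corollary \ref{cor: prod exp} removes product expansivity, Theorem \ref{NewLedr} removes entropy expansiveness), deferring to \cite{CaT} for that step is reasonable, but a self-contained proof would need to carry it out. Two minor points of attribution: the fact that every $\FFF\x\FFF$-invariant measure is a joining of its marginals is definitional, not the content of Proposition \ref{prop: projection of ES} (which gives the pressure inequality for joinings); and the claim that topological conditional entropy is subadditive under products is correct but is itself a citation-worthy fact (Misiurewicz) rather than an immediate observation.
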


This is the abstract result that we will be improving and applying in this paper.

\section{Expansivity}\label{Expansivity}

In Theorem \ref{general}, there are three conditions placed on the expansivity of the flow. One, that the pressure of obstructions to expansivity is smaller than the pressure of the system, is necessary to apply the Climenhaga-Thompson machinery. We will show that the other two conditions, product expansivity of equilibrium states of $(X\x X,\FFF\x\FFF)$ and entropy expansivity of $(X,\FFF)$, can be removed.

\subsection{Product Expansivity}

It can be easily checked that for every flow $(X,\FFF)$ and potential $\ph$, the Cartesian product $(X\x X,\FFF\x\FFF)$ equipped with the potential $\Phi$ never satisfies the inequality $P_{\exp}^\perp(\Phi;\FFF\x\FFF) < P(\Phi)$. This is because $\NE(\eps; \FFF\x \FFF) = X\x X$, which is similarly the reason that the Cartesian product of expansive flows is not expansive.

The notion of \emph{product expansivity} was introduced in \cite{CaT} to rectify this problem. In particular, the \emph{product non-expansive set} is defined as
$$\NE^\x(\eps) := \{(x,y)\mid \Gamma_\eps((x,y)) \not\subset f_{[-s,s]}(x)\x f_{[-s,s]}(y) \text{ for all } s > 0\}.$$
and say that a measure $\mu$ is \emph{product expansive} if $\mu(\NE^\x(\eps)) = 0$ for all small $\eps > 0$.

In Theorem \ref{general}, one needs to show that every equilibrium state for $(X\x X,\FFF\x \FFF,\Phi)$ is product expansive. We will show that the inequality $P_{\exp}^{\perp}(\ph) < P(\ph)$ implies this condition.

\begin{prop}\label{prop: exp gap}
For any continuous flow $(X,\FFF)$, any ergodic measure $\nu$ which is not product expansive satisfies $P_\nu(\Phi) \leq P_{\exp}^\perp(\ph) + P(\ph)$.
\end{prop}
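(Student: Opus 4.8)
The plan is to unwind the definition of product non-expansivity and reduce it to ordinary non-expansivity in one of the two coordinates. Suppose $\nu$ is ergodic and not product expansive, so $\nu(\NE^\x(\eps)) > 0$ for some small $\eps > 0$; by ergodicity we may as well assume $\nu(\NE^\x(\eps)) = 1$. Fix a point $(x,y)$ in $\NE^\x(\eps)$. Then there is a point $(x',y')\in\Gamma_\eps((x,y))$ with $(x',y')\notin f_{[-s,s]}(x)\x f_{[-s,s]}(y)$ for every $s > 0$. Being in $\Gamma_\eps((x,y))$ means $d(f_tx,f_tx')\le\eps$ and $d(f_ty,f_ty')\le\eps$ for all $t\in\R$, i.e.\ $x'\in\Gamma_\eps(x)$ and $y'\in\Gamma_\eps(y)$. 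The first step is the combinatorial observation that $(x',y')\notin f_{[-s,s]}(x)\x f_{[-s,s]}(y)$ for all $s$ forces at least one of the coordinates to be genuinely non-expansive: if $x'\in f_{[-s_0,s_0]}(x)$ for some $s_0$ \emph{and} $y'\in f_{[-s_1,s_1]}(y)$ for some $s_1$, then taking $s=\max(s_0,s_1)$ gives $(x',y')\in f_{[-s,s]}(x)\x f_{[-s,s]}(y)$, a contradiction. Hence either $\Gamma_\eps(x)\not\subset f_{[-s,s]}(x)$ for all $s$, i.e.\ $x\in\NE(\eps;\FFF)$, or the analogous statement for $y$. (Here one has to be a little careful: for a fixed witness $(x',y')$ one coordinate may fail to be flow-trapped, but in principle different witnesses could fail in different coordinates; the clean way is to say $(x,y)\in\NE^\x(\eps)$ implies $x\in\NE(\eps;\FFF)$ or $y\in\NE(\eps;\FFF)$, which is exactly the contrapositive of: $x$ flow-trapped and $y$ flow-trapped $\Rightarrow$ $(x,y)$ product-trapped. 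This does go through, since if both $\Gamma_\eps(x)\subset f_{[-s_0,s_0]}(x)$ and $\Gamma_\eps(y)\subset f_{[-s_1,s_1]}(y)$ then $\Gamma_\eps((x,y))\subset\Gamma_\eps(x)\x\Gamma_\eps(y)\subset f_{[-s,s]}(x)\x f_{[-s,s]}(y)$ with $s=\max(s_0,s_1)$.)

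This gives the set inclusion $\NE^\x(\eps)\subset(\NE(\eps;\FFF)\x X)\cup(X\x\NE(\eps;\FFF))$. So $\nu(\NE^\x(\eps))=1$ forces either $\nu(\NE(\eps;\FFF)\x X)>0$ or $\nu(X\x\NE(\eps;\FFF))>0$; without loss of generality assume the former, and let $\mu := (\pi_1)_*\nu$ be the first marginal, which is an $\FFF$-invariant measure on $X$ with $\mu(\NE(\eps;\FFF))>0$. Now I would pass to the ergodic decomposition of $\mu$ (or use that $\nu$ ergodic $\Rightarrow$ $\mu$ ergodic), extract an ergodic component $\mu'$ charging $\NE(\eps;\FFF)$, and conclude $\mu'(\NE(\eps;\FFF))=1$, hence by the definition of $P_{\exp}^\perp$ we get $P_{\mu'}(\ph)\le P_{\exp}^\perp(\ph)$. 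The second step is to go from the marginal estimate back to $\nu$: since $\nu$ is a joining of its marginal $\mu$ (on the first factor) with $(\pi_2)_*\nu$ (on the second), Proposition~\ref{prop: projection of ES} gives $P_\nu(\Phi)\le P_\mu(\ph)+P_{(\pi_2)_*\nu}(\ph)\le P_\mu(\ph)+P(\ph)$. Combined with $P_\mu(\ph)\le P_{\exp}^\perp(\ph)$ (which passes from the ergodic component to $\mu$ by affineness of $\mu\mapsto P_\mu(\ph)$ on the relevant set, or one simply runs the whole argument with $\nu$ ergodic so that $\mu$ is already ergodic), this yields $P_\nu(\Phi)\le P_{\exp}^\perp(\ph)+P(\ph)$, as desired.

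The main obstacle is the first step: making the reduction from $\NE^\x$ to $\NE$ in a single coordinate fully rigorous, in particular handling the order of quantifiers on $s$ correctly and making sure that $\Gamma_\eps((x,y))=\Gamma_\eps(x)\x\Gamma_\eps(y)$ (which is immediate from the product metric, but worth stating). A secondary technical point is the descent to an ergodic component: one wants $\mu(\NE(\eps;\FFF))>0\Rightarrow$ some ergodic $\mu'$ with $\mu'(\NE(\eps;\FFF))=1$, and then one should note that $\NE(\eps;\FFF)$ decreases as $\eps\downarrow 0$ so that the supremum defining $P_{\exp}^\perp(\ph)$ is genuinely an upper bound for $P_{\mu'}(\ph)$ at that scale. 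The final pressure inequality via Proposition~\ref{prop: projection of ES} is then routine. One should also remark that the argument is symmetric in the two coordinates and does not require $\ph$ or $\Phi$ to satisfy any regularity beyond continuity.
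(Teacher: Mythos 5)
Your argument is the same as the paper's: show $\NE^\x(\eps)\subset(\NE(\eps;\FFF)\x X)\cup(X\x\NE(\eps;\FFF))$ (the paper asserts this as an equality, but the inclusion is what gets used), exploit ergodicity and invariance to push the marginal measure of $\NE(\eps;\FFF)$ to $1$ in one coordinate, and finish with the joining inequality of Proposition~\ref{prop: projection of ES}. The detour through ergodic decomposition is superfluous, as you yourself note parenthetically: a factor of an ergodic measure is ergodic, so $(\pi_1)_*\nu$ is already ergodic and $\mu(\NE(\eps;\FFF))\in\{0,1\}$.

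There is one genuine imprecision in the quantifier handling that your ``secondary technical point'' does not actually fix. You begin with a \emph{single} small $\eps$ for which $\nu(\NE^\x(\eps))=1$, and end up with $\mu(\NE(\eps;\FFF))=1$ at that one scale. But $P_{\exp}^\perp(\ph)=\lim_{\eps\downarrow 0}\sup\{P_\mu(\ph):\mu(\NE(\eps))=1\}$, and since $\NE(\eps)$ \emph{shrinks} as $\eps\downarrow 0$ the quantity in the limit is \emph{non-increasing}, so the limit is an infimum over $\eps$. Knowing $\mu(\NE(\eps_0))=1$ for one $\eps_0$ only gives $P_\mu(\ph)\le\sup\{P_{\mu'}(\ph):\mu'(\NE(\eps_0))=1\}$, which is $\ge P_{\exp}^\perp(\ph)$ rather than $\le$; the monotonicity you invoke cuts against you here. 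The fix is what the paper does: first observe (using that $\NE^\x(\eps)$ increases with $\eps$ together with the negation of product expansivity) that $\nu(\NE^\x(\eps))>0$, hence $=1$ by ergodicity of the invariant set, for \emph{every} $\eps>0$; then choose a coordinate $i$ such that $(\pi_i)_*\nu(\NE(\eps_n))=1$ along some $\eps_n\downarrow 0$ (possible since there are only two coordinates), whence by monotonicity $(\pi_i)_*\nu(\NE(\eps))=1$ for all $\eps>0$, and the bound $P_{(\pi_i)_*\nu}(\ph)\le P_{\exp}^\perp(\ph)$ now legitimately passes to the limit.
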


\begin{proof}
Let $\nu\in\MMM(X\x X, \FFF\x\FFF)$, and suppose that $\nu$ is not product expansive. Then for all $\eps > 0$, $\nu(\NE^\x(\eps)) = 1$, since this is an invariant set. Now observe that $\NE^\x(\eps) = X\x \NE(\eps)\cup \NE(\eps)\x X$. By ergodicity of $\nu$ and invariance of $\NE(\eps)$, we can assume without loss of generality that $\nu(X\x \NE(\eps)) = 1$. Writing $(\pi_1)_*\nu(A) = \nu(X\x A)$, we see that $(\pi_1)_*\nu(\NE(\eps)) = 1$. As $\eps$ can be arbitrarily small, it follows that $P_{(\pi_1)_*\nu}(\ph) \leq P_{\exp}^\perp(\ph)$. Then, since $\nu$ is a joining of $(\pi_1)_*\nu$ and $(\pi_2)_*\nu$, Proposition \ref{prop: projection of ES} shows that
$$P_\nu(\Phi) \leq P_{(\pi_1)_*\nu}(\ph) + P_{(\pi_2)_*\nu}(\ph) \leq P_{\exp}^\perp(\ph) + P(\ph).$$
With this, we have completed our proof.
\end{proof}

\begin{cor}\label{cor: prod exp}
If $P_{\exp}^{\perp}(\ph) < P(\ph)$, then any equilibrium state for $(X\x X,\FFF\x\FFF,\Phi)$ is product expansive.
\end{cor}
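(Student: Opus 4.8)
The plan is to deduce Corollary~\ref{cor: prod exp} directly from Proposition~\ref{prop: exp gap} by a short ergodic-decomposition argument. First I would fix an equilibrium state $\mu$ for $(X\x X, \FFF\x\FFF, \Phi)$, and suppose for contradiction that $\mu$ is not product expansive, i.e.\ $\mu(\NE^\x(\eps_0)) > 0$ for some $\eps_0 > 0$. Since $\NE^\x(\eps)$ is an increasing family of invariant sets as $\eps$ grows, and since the $P_{\exp}^\perp$ inequality is what drives everything, the first step is to pass to the ergodic decomposition $\mu = \int \nu \, d\tau(\nu)$ and observe that $\Phi$-pressure decomposes affinely: $P_\mu(\Phi) = \int P_\nu(\Phi)\,d\tau(\nu)$ (affinity of entropy and linearity of the integral of $\Phi$). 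For $\mu$ to be an equilibrium state we need $P_\nu(\Phi) = P(\Phi) = 2P(\ph)$ for $\tau$-a.e.\ $\nu$.

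Next I would split the ergodic components into those which are product expansive and those which are not. If $\mu$ fails to be product expansive, then a positive-$\tau$-measure set of ergodic components $\nu$ must themselves fail to be product expansive: indeed, if $\nu(\NE^\x(\eps)) = 0$ for a.e.\ component and all small $\eps$, then integrating gives $\mu(\NE^\x(\eps)) = 0$, contradicting our assumption (here one uses that $\NE^\x(\eps)$ is measurable and $\mu$-a.e.\ points decompose according to ergodic components). For each such non-product-expansive ergodic $\nu$, Proposition~\ref{prop: exp gap} gives $P_\nu(\Phi) \leq P_{\exp}^\perp(\ph) + P(\ph) < 2P(\ph)$, using the hypothesis $P_{\exp}^\perp(\ph) < P(\ph)$. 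But this contradicts $P_\nu(\Phi) = 2P(\ph)$ for a.e.\ component. Hence $\mu$ is product expansive.

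The one technical point worth being careful about—and the likely main obstacle—is the interplay between the fixed scale $\eps$ in $\NE^\x(\eps)$ and the $\tau$-null exceptional sets: the ``bad'' set of components could in principle depend on $\eps$, so one should first fix a single scale $\eps_0$ witnessing the failure of product expansivity of $\mu$ and run the whole argument at that scale, noting that Proposition~\ref{prop: exp gap} only needs $\nu$ to be non-product-expansive (equivalently $\nu(\NE^\x(\eps)) = 1$ for all small $\eps$, which for ergodic $\nu$ follows from positivity at any one scale by invariance and monotonicity). Concretely: if $\mu(\NE^\x(\eps_0)) > 0$, pick an ergodic component $\nu$ with $\nu(\NE^\x(\eps_0)) > 0$; ergodicity plus invariance of $\NE^\x(\eps_0)$ forces $\nu(\NE^\x(\eps_0)) = 1$, and monotonicity of $\eps \mapsto \NE^\x(\eps)$ upgrades this to all $\eps \geq \eps_0$, while for $\eps < \eps_0$ we still get what Proposition~\ref{prop: exp gap} needs since its proof only used $\nu(\NE^\x(\eps))=1$ for arbitrarily small $\eps$—so one should instead note $\nu$ is non-product-expansive in the sense required, or simply reprove the $\eps_0$-version of Proposition~\ref{prop: exp gap} inline. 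Either way the pressure estimate $P_\nu(\Phi) \le P_{\exp}^\perp(\ph) + P(\ph) < P(\Phi)$ contradicts $\nu$ being (a component of) an equilibrium state, so no such $\nu$ exists and $\mu$ is product expansive.
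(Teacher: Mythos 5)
The paper gives no separate proof of this corollary, so the question is simply whether your argument is correct as written. The overall shape is right—ergodic decomposition, affinity of pressure, apply Proposition~\ref{prop: exp gap} to ergodic components—but there is a genuine gap in the step where you try to invoke that proposition, and your attempted patch at the end does not repair it.

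The issue is the quantifier on $\eps$. Your characterization ``$\mu$ not product expansive, i.e.\ $\mu(\NE^\x(\eps_0)) > 0$ for some $\eps_0 > 0$'' is not the negation of the definition: a measure $\mu$ fails to be product expansive precisely when $\mu(\NE^\x(\eps)) > 0$ for \emph{every} $\eps > 0$ (since $\NE^\x(\eps)$ is monotone in $\eps$, and for large $\eps$ one has $\NE^\x(\eps) = X\x X$, the existential version is satisfied by every measure). After you fix a single $\eps_0$ and extract an ergodic component $\nu$ with $\nu(\NE^\x(\eps_0)) = 1$, monotonicity only gives $\nu(\NE^\x(\eps)) = 1$ for $\eps \geq \eps_0$, which is exactly the wrong direction: the proof of Proposition~\ref{prop: exp gap} needs $\nu(\NE^\x(\eps)) = 1$ for \emph{arbitrarily small} $\eps$ in order to conclude $(\pi_i)_*\nu(\NE(\eps))=1$ for arbitrarily small $\eps$ and hence $P_{(\pi_i)_*\nu}(\ph) \leq P_{\exp}^\perp(\ph)$. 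Your sentence ``for $\eps < \eps_0$ we still get what Proposition~\ref{prop: exp gap} needs since its proof only used $\nu(\NE^\x(\eps))=1$ for arbitrarily small $\eps$'' asserts the very thing you have failed to establish. Relatedly, the earlier step ``if $\nu(\NE^\x(\eps)) = 0$ for a.e.\ component and all small $\eps$, then integrating gives $\mu(\NE^\x(\eps)) = 0$'' is also not valid as written, because the threshold below which $\nu(\NE^\x(\cdot))$ vanishes can depend on the component and shrink to $0$ along the decomposition.

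The repair, which you gesture at but do not carry out, is to work at a single well-chosen scale and go back inside the \emph{definition} of $P_{\exp}^\perp$. Since $P_{\exp}^\perp(\ph) = \lim_{\eps\downarrow 0}\sup\{P_m(\ph): m(\NE(\eps))=1\} < P(\ph)$, fix $\eps_1 > 0$ small enough that $\sup\{P_m(\ph): m(\NE(\eps_1))=1\} < P(\ph)$. For any ergodic $\nu$ on the product with $\nu(\NE^\x(\eps_1)) = 1$, ergodicity and invariance of $X\x\NE(\eps_1)$ and $\NE(\eps_1)\x X$ force $(\pi_i)_*\nu(\NE(\eps_1)) = 1$ for some $i$, hence $P_{(\pi_i)_*\nu}(\ph) < P(\ph)$, and Proposition~\ref{prop: projection of ES} gives $P_\nu(\Phi) < 2P(\ph) = P(\Phi)$. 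Thus no ergodic equilibrium state charges $\NE^\x(\eps_1)$. For a general equilibrium state $\mu$, affinity of pressure makes $\tau$-a.e.\ ergodic component an equilibrium state, so $\tau$-a.e.\ component assigns measure $0$ to $\NE^\x(\eps_1)$, and integrating yields $\mu(\NE^\x(\eps_1)) = 0$; monotonicity then gives $\mu(\NE^\x(\eps)) = 0$ for all $\eps \leq \eps_1$, which is product expansivity. This is essentially the $\eps_0$-version of Proposition~\ref{prop: exp gap} you wanted to ``reprove inline''—the point is that it cannot be invoked as a black box, because it genuinely requires choosing the scale using the limit in $P_{\exp}^\perp$.
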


Consequently, we can remove the product expansivity assumption from Theorem \ref{general}.

\subsection{Ledrappier's Criterion}

In Theorem \ref{Ledr}, Ledrappier gives an elegant criterion for the $K$-property. We will show that the proof he provides actually shows a stronger result than the one he stated. In order to do so, we provide the computations omitted in \cite{L}, and in the process remove the assumption of asymptotic entropy expansiveness.

\begin{thm}\label{NewLedr}
	Let $(X,f)$ be a dynamical system, and let $\ph : X\to\R$ be a continuous function. Let $(X\x X, f\x f)$ be the Cartesian product of $(X,f)$ with itself, and define the potential $\Phi(x,y) = \ph(x) + \ph(y)$. If $\Phi$ has a unique equilibrium state, then $(X,f,\ph)$ has a unique equilibrium state, which has the K-property.
\end{thm}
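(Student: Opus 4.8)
The plan is to deduce Theorem \ref{NewLedr} from Ledrappier's original criterion, Theorem \ref{Ledr}, by removing the hypothesis of asymptotic $h$-expansiveness. The key observation is that this hypothesis is only used in \cite{L} to guarantee upper semicontinuity of the entropy map on $(X\times X, f\times f)$ (so that equilibrium states exist and the variational argument behind the $K$-property goes through). So the first and main task is to re-examine Ledrappier's proof and isolate exactly where upper semicontinuity, rather than expansiveness, is what is genuinely needed. I would state this as an intermediate lemma: if the entropy map on $(X\times X, f\times f)$ is upper semicontinuous and $\Phi$ has a unique equilibrium state, then the unique equilibrium state for $\ph$ is $K$. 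Then the second task is to eliminate \emph{even} the upper semicontinuity assumption, which is where the fiber-entropy and joining machinery from \S 2.2 enters.

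For the core argument I would follow Ledrappier's strategy via the Pinsker factor. Let $\mu$ be an equilibrium state for $\ph$ (existence is not assumed a priori, so one must be careful — but if $\mu$ fails to exist the statement on uniqueness is vacuous in one direction; more precisely, I would first argue existence or sidestep it). Suppose $\mu$ is not $K$: then its Pinsker factor $(Y,g,\nu)$ is nontrivial. Form the relatively independent self-joining $\lambda := \mu \otimes_Y \mu \in \MMM(X\times X, f\times f)$. Using Proposition \ref{thm: fiber entropy} (the Abramov--Rokhlin formula for partitions, via fiber entropy over the invertible factor $Y$), compute
$$h_\lambda(f\times f) = h_\nu(g) + \int H_\mu(\cdot\mid y)\,d\nu + \int H_\mu(\cdot\mid y)\,d\nu = h_\mu(f) + h_\mu(f) = 2h_\mu(f),$$
the point being that the fiber systems over $Y$ are independent in $\lambda$ so their fiber entropies add, while $h_\nu(g)=0$ since $Y$ is a zero-entropy factor; here one should phrase this at the level of partitions and take a supremum. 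Since $\int\Phi\,d\lambda = 2\int\ph\,d\mu$, we get $P_\lambda(\Phi) = 2P_\mu(\ph) = 2P(\ph) = P(\Phi)$, so $\lambda$ is an equilibrium state for $\Phi$. On the other hand the independent joining $\mu\times\mu$ is also an equilibrium state for $\Phi$, and $\lambda \neq \mu\times\mu$ precisely because $Y$ is nontrivial — contradicting uniqueness of the equilibrium state for $\Phi$. Hence $\mu$ is $K$; and any equilibrium state for $\ph$ is $K$, which in particular forces uniqueness for $\ph$ (two distinct ergodic equilibrium states would both be $K$, hence both have trivial Pinsker factor, but then $\mu_1\otimes_{\mathrm{pt}}\mu_2 = \mu_1\times\mu_2$ is an equilibrium state for $\Phi$ distinct from the $K$-joining argument — or more simply, a non-ergodic equilibrium state cannot be $K$, so uniqueness is automatic once every ergodic equilibrium state is $K$ and there is at most one of them by the same joining trick applied to a nontrivial convex combination).

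The main obstacle I anticipate is the entropy bookkeeping in the step $h_\lambda(f\times f) = 2h_\mu(f)$ without any semicontinuity or expansiveness hypothesis: one cannot simply invoke the Abramov--Rokhlin formula for the transformations, since that requires controlling suprema over partitions on the product, so the argument must be carried out partition-by-partition and then one must justify passing to the supremum — this is exactly the computation Ledrappier omits and which the paper promises to supply. A secondary subtlety is that without upper semicontinuity one does not automatically know an equilibrium state for $\ph$ exists; I would handle this by noting that the hypothesis "$\Phi$ has a unique equilibrium state" already presupposes existence on the product, and $\mu\times\mu$ being an equilibrium state for $\Phi$ forces (via Proposition \ref{prop: projection of ES}) that $\mu$ is an equilibrium state for $\ph$ — so existence for $\ph$ follows from existence for $\Phi$, and one extracts $\mu$ as a marginal of the unique equilibrium state of $\Phi$ after first checking that this unique state must itself be of product form $\mu\times\mu$. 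This last point — that uniqueness on the product forces the product structure, giving both existence and uniqueness downstairs — is the clean way to organize the whole argument.
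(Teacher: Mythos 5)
Your proposal is correct and takes essentially the same approach as the paper: form the relatively independent self-joining of $\mu$ over the Pinsker factor, compute its entropy partition-by-partition via the Downarowicz--Serafin fiber-entropy formula (Proposition \ref{thm: fiber entropy}), and contradict uniqueness of the equilibrium state for $\Phi$. The paper's proof skips the intermediate upper-semicontinuity lemma you propose and goes directly to the joining argument, needing only the inequality $h_m(f\x f)\geq 2h_\mu(f)$ rather than equality, and handles existence and uniqueness downstairs exactly as you suggest in your final paragraph, via Proposition \ref{prop: projection of ES} and the fact that the unique equilibrium state for $\Phi$ must be $\mu\x\mu$.
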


\begin{proof}
	First, observe by Proposition \ref{prop: projection of ES} that $\ph$ has an equilibrium state if $\Phi$ does. We prove the contrapositive. Suppose $\mu$ is the unique equilibrium state for $(X,f,\ph)$ which is not $K$. Then $\mu\x\mu$ is the unique equilibrium state for $(X\x X,f\x f,\Phi)$. Now, as the Pinsker factor $\Pi$ is non-trivial, we can define the measure $m$ to be the relatively independent self-joining of $\mu$ over the Pinsker factor. For the reader's convenience, we note that this is equivalent to defining $$m(A\x A') = \int_A \E[\chi_{A'}\mid\Pi]\,d\mu.$$	
	Observe that
	$$\int\Phi\,dm = 2\int\ph\,d\mu,$$
	because $\Phi$ acts independently on each coordinate, and $m(A\x X) = m(X\x A) = \mu(A)$. We now will show that
	$$h_m(f\x f) = 2h_\mu(f).$$
	To compute the entropy of $m$, we appeal to Proposition \ref{thm: fiber entropy} and the definition of fiber entropy. Let $\xi$ be a partition of $X$. Conditioning on a factor does not increase entropy, so
	$$h_m(\xi\x \xi; f\x f) \geq h_m(\xi\x\xi \mid \Pi) = \int_{y\in X}H_{m}(\xi\x\xi \mid y)\,d\mu(y)$$
	where the integrand in the last term is fiber entropy. Then, as the disintegration of $m$ over $\Pi$ is given by $\mu_y\times \mu_y$, where $\{\mu_y\}$ is the disintegration of $\mu$ over $\Pi$, observe
	\begin{align*}
	H_{m}(\xi\x\xi\mid y) &= \lim\limits_{n\to\infty}H_{\mu_y\x\mu_y}\left(\xi\x\xi\mid \bigvee_{i=0}^{n-1}(f\x f)^{-i}(\xi\x\xi)\right)
	\\
	&= \lim\limits_{n\to\infty}2H_{\mu_y}\left(\xi\mid \bigvee_{i=0}^{n-1}f^{-i}\xi\right)
	\\
	&= 2H_\mu(\xi \mid y).
	\end{align*}
	Consequently, $h_m(\xi\mid \Pi) = 2h_\mu(\xi\mid \Pi)$. Taking the supremum over all finite partitions, we have that
	$$h_m(f\x f) = \sup_{\xi} h_m(\xi\x\xi) \geq 2\sup_{\xi} h_\mu(\xi \mid \Pi) = 2h_\mu(f\mid \Pi) = 2h_\mu(f)$$
	where the last equality follows because $\Pi$ is a zero entropy factor \cite[Fact 4.1.6]{downarowicz}.
	Therefore, we see that
	$$P(\Phi) \geq P_m(\Phi) \geq 2P_\mu(\ph) = 2P(\ph) = P(\Phi).$$
	Thus, $m$ is an equilibrium state for $\Phi$. As $\mu\x\mu$ is an equilibrium state for $\Phi$ by assumption, this proves the contrapositive.
\end{proof}

This, along with the fact that $P(\Phi,\gamma) = P(\Phi)$ for small $\gamma$ by an easy modification of \cite[Proposition 3.7]{CT}, allows us to remove the assumption of entropy expansiveness from Theorem \ref{general}.

\section{Pressure Estimates}\label{Pressure}

We recall the two pressure conditions in Theorem \ref{general}. In particular, for a $\lambda$-decomposition, writing $\tilde{\lambda}(x,y) = \lambda(x)\lambda(y)$, we wish to show that
$$\sup_\mu\left\{P_\mu(\ph)\mid\int\lambda\,d\mu = 0\right\} < P(\ph)$$
and
$$\sup_\mu\left\{P_\mu(\Phi)\mid \int\tilde{\lambda}\,d\mu = 0\right\} < P(\Phi).$$
We will show that the pressure gap in the product space implies the corresponding inequality in the base system. While we do not know if the reverse implication holds---which would be extremely interesting---we do provide two separate methods of checking that the pressure gap in the product holds. Our first approach requires some knowledge of the unique equilibrium state in the base, and will be used for perturbations of Ma\~n\'e diffeomorphisms, and the second is a folklore result, which we will apply to the Katok map.

Recall the method for computing pressure estimates for $\lambda$-decompositions.

\begin{prop}[{\cite[Theorem 3.6]{CaT}}]\label{prop: pressure decrease}
For any $\lambda$-decomposition, if the entropy map is upper semicontinuous
$$\lim\limits_{\eta\downarrow 0}P(B(\eta),\ph) \leq \sup\left\{P_\nu(\ph)\mid \int\lambda\,d\nu = 0\right\}.$$
\end{prop}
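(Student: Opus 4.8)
The plan is to express $\lim_{\eta\downarrow 0}P(B(\eta),\ph)$ as $P_\mu(\ph)$ for a single $\FFF$-invariant measure $\mu$ supported on the zero set of $\lambda$, by running the empirical-measure construction behind the Variational Principle along orbit segments that lie inside $B(\eta)$. First note that $B(\eta)\subset B(\eta')$ whenever $\eta<\eta'$, so $\eta\mapsto P(B(\eta),\ph)$ is nondecreasing and the left-hand side equals $L:=\inf_{\eta>0}P(B(\eta),\ph)$; since $L\le P(\ph)<\infty$, we may assume $L\in\R$, the case $L=-\infty$ being vacuous. Since $\eps\mapsto P(B(\eta),\ph,\eps)$ is nonincreasing with $P(B(\eta),\ph,\eps)\uparrow P(B(\eta),\ph)$ as $\eps\downarrow 0$, I would fix sequences $\eta_j\downarrow 0$ and $\eps_j\downarrow 0$ with $P(B(\eta_j),\ph,\eps_j)\ge P(B(\eta_j),\ph)-1/j$.

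Fix $j$. Choose $t_n\to\infty$ realizing the $\limsup$ defining $P(B(\eta_j),\ph,\eps_j)$, and for each $n$ a $(t_n,\eps_j)$-separated set $E_n\subset B(\eta_j)_{t_n}$ with $Z_n:=\sum_{x\in E_n}e^{\int_0^{t_n}\ph(f_sx)\,ds}\ge\tfrac12\Lambda_{t_n}(B(\eta_j),\ph,\eps_j)$. Put $\sigma_n:=Z_n^{-1}\sum_{x\in E_n}e^{\int_0^{t_n}\ph(f_sx)\,ds}\,\delta_x$ and form the orbit averages $\mu_n:=\frac1{t_n}\int_0^{t_n}(f_s)_*\sigma_n\,ds$; along a subsequence $\mu_n\to\mu_j$ weak-$*$, and $\mu_j$ is $\FFF$-invariant because $\mu_n$ and $(f_r)_*\mu_n$ differ by at most $2r/t_n$ in total variation. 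The hard half of the Variational Principle in its flow formulation --- pick a finite partition $\xi$ of diameter less than $\eps_j$ with $\mu_j(\partial\xi)=0$ and run the usual subadditivity argument on $\int_0^{t_n}\ph(f_sx)\,ds$ --- then gives
\[
P_{\mu_j}(\ph)=h_{\mu_j}(\FFF)+\int\ph\,d\mu_j\ \ge\ P(B(\eta_j),\ph,\eps_j).
\]
Moreover, since $\supp\sigma_n\subset B(\eta_j)_{t_n}$,
\[
\int\lambda\,d\mu_n=\int\Big(\frac{1}{t_n}\int_0^{t_n}\lambda(f_sx)\,ds\Big)d\sigma_n(x)<\eta_j,
\]
and lower semicontinuity of $\mu\mapsto\int\lambda\,d\mu$ --- valid because $\lambda$ is bounded and lower semicontinuous --- yields $\int\lambda\,d\mu_j\le\eta_j$.

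Now I would let $j\to\infty$: by weak-$*$ compactness of $\MMM(X,\FFF)$ a subsequence of $(\mu_j)$ converges to an $\FFF$-invariant $\mu$. Lower semicontinuity of $\mu\mapsto\int\lambda\,d\mu$ together with $\lambda\ge 0$ forces $\int\lambda\,d\mu\le\liminf_j\eta_j=0$, hence $\int\lambda\,d\mu=0$. On the other hand, the standing hypothesis that the entropy map is upper semicontinuous makes $\mu\mapsto P_\mu(\ph)=h_\mu(\FFF)+\int\ph\,d\mu$ upper semicontinuous, so
\[
P_\mu(\ph)\ \ge\ \limsup_{j\to\infty}P_{\mu_j}(\ph)\ \ge\ \limsup_{j\to\infty}\big(P(B(\eta_j),\ph)-1/j\big)\ =\ L.
\]
Since $\int\lambda\,d\mu=0$, this gives $\sup\{P_\nu(\ph):\int\lambda\,d\nu=0\}\ge P_\mu(\ph)\ge L=\lim_{\eta\downarrow 0}P(B(\eta),\ph)$, which is the assertion.

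The only genuinely technical step is the middle one: implementing the separated-set-to-entropy bound $P_{\mu_j}(\ph)\ge P(B(\eta_j),\ph,\eps_j)$ for flows. The scheme is exactly the classical proof of the hard inequality in the Variational Principle, but one must handle the continuous-time averaging, the passage between the flow and its time-one map, and the contribution of partition elements meeting $\partial\xi$ accumulated over long orbit segments; one should also check that restricting separated sets to $B(\eta_j)$ does not spoil near-maximality of $\Lambda_{t_n}$, and that the weak-$*$ limits of the orbit averages are genuinely $\FFF$-invariant rather than merely invariant under the time-one map. Everything else --- monotonicity of $\eta\mapsto P(B(\eta),\ph)$, the semicontinuity properties of $\mu\mapsto\int\lambda\,d\mu$ and of $\mu\mapsto P_\mu(\ph)$, and the two weak-$*$ compactness arguments --- is routine.
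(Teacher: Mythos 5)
Your argument is correct, and it is the standard way to prove a statement of this type: run the Misiurewicz/Walters ``hard half of the variational principle'' construction on near-maximal $(t,\eps)$-separated subsets of $B(\eta_j)_t$, extract an invariant weak-$*$ limit $\mu_j$ with $P_{\mu_j}(\ph)\ge P(B(\eta_j),\ph,\eps_j)$ and $\int\lambda\,d\mu_j\le\eta_j$, and then diagonalize in $j$ using lower semicontinuity of $\mu\mapsto\int\lambda\,d\mu$ (to force $\int\lambda\,d\mu=0$) together with the hypothesized upper semicontinuity of the entropy map (to preserve the pressure bound). This is essentially the proof given in \cite{CaT}, where the result is stated as Theorem~3.6; the present paper merely cites it. Two small remarks: the step $P_{\mu_j}(\ph)\ge P(B(\eta_j),\ph,\eps_j)$ does not need the upper-semicontinuity hypothesis, and you use it only in the final passage $j\to\infty$ --- this is exactly where it is needed and your bookkeeping is right. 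Also, your dismissal of the $L=-\infty$ case as ``vacuous'' is slightly loose phrasing (if $B_\infty=\es$ then both sides are $-\infty$, so the inequality holds trivially rather than vacuously; if $B_\infty\neq\es$ the sup is finite and $-\infty\le$ anything), but the conclusion is correct and this is the degenerate case the paper's remark after Corollary~\ref{cor: pressure gap sufficient} also singles out.
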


Writing $B_\infty := \bigcap_{t\in\R}f_t\lambda^{-1}(0)$ and $\tilde{B}_\infty$ for the corresponding set for $\tilde{\lambda}$, we can restate this result into a ``pressure gap'' formulation.

\begin{prop}\label{prop: pressure gap}
For any lower semicontinuous $\lambda : X\to [0,\infty)$,
$$\sup\left\{P_\nu(\ph)\mid \int\lambda\,d\nu = 0\right\} = P(B_\infty,\ph).$$
\end{prop}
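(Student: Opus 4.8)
The plan is to prove the two inequalities separately. For the direction $\sup\{P_\nu(\ph) \mid \int \lambda\,d\nu = 0\} \leq P(B_\infty,\ph)$, I would start with an ergodic measure $\nu$ with $\int \lambda\,d\nu = 0$. Since $\lambda \geq 0$, the integral being zero forces $\lambda(x) = 0$ for $\nu$-a.e. $x$, i.e.\ $\nu(\lambda^{-1}(0)) = 1$. However, $\lambda^{-1}(0)$ need not be invariant, so I would pass to the invariant set $B_\infty = \bigcap_{t\in\R} f_t \lambda^{-1}(0)$: by invariance of $\nu$, each $f_t\lambda^{-1}(0)$ has full $\nu$-measure, and since $B_\infty$ is a countable intersection along a dense set of times $t$ (combined with lower semicontinuity of $\lambda$, which makes $\lambda^{-1}(0)$ a $G_\delta$ and lets us reduce to rational times), $\nu(B_\infty) = 1$. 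Since $B_\infty$ is compact (it is closed, being an intersection of the closed sets $f_t\lambda^{-1}(0)$, inside compact $X$) and invariant, $\nu$ is supported on $B_\infty$, so $P_\nu(\ph) \leq P(B_\infty, \ph)$ by the variational principle restricted to the subsystem $(B_\infty, \FFF)$. Taking the supremum over such $\nu$ gives the inequality (a general $\nu$ with $\int\lambda\,d\nu = 0$ can be handled via the ergodic decomposition, since $\int\lambda\,d\nu_x = 0$ for a.e.\ ergodic component $\nu_x$).

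For the reverse inequality $P(B_\infty, \ph) \leq \sup\{P_\nu(\ph) \mid \int\lambda\,d\nu = 0\}$, I would use the variational principle for topological pressure on the invariant compact set $B_\infty$: $P(B_\infty, \ph) = \sup\{P_\mu(\ph) \mid \mu \in \MMM(B_\infty, \FFF)\}$ (this is the standard variational principle for the restricted system, and the upper-capacity pressure of an invariant compact set agrees with topological pressure and hence with this supremum — one should cite \cite{Pes} or the flow analogue for this identification). Any $\mu \in \MMM(B_\infty, \FFF)$ extends to a measure on $X$ with $\mu(B_\infty) = 1$, hence $\mu(\lambda^{-1}(0)) = 1$, so $\int\lambda\,d\mu = 0$. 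Thus every term in the supremum defining $P(B_\infty,\ph)$ appears in the supremum on the right, giving the inequality.

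The main subtlety I expect is the measurability/topological bookkeeping around $B_\infty$: establishing that $B_\infty$ is compact and that a measure with $\int\lambda\,d\nu = 0$ actually charges $B_\infty$ fully, which relies on using lower semicontinuity of $\lambda$ to reduce the uncountable intersection $\bigcap_{t\in\R}$ to a countable one. Concretely, for the flow case, I would argue that $f_t\lambda^{-1}(0) \supseteq \bigcap_{|s| < \delta} f_s \lambda^{-1}(0)$-type continuity arguments, or more simply note that if $x \in \lambda^{-1}(0)$ for a dense set of times along an orbit and $\lambda$ is lower semicontinuous then... — actually the cleaner route is: $\int_0^T \lambda(f_s x)\,ds = 0$ for $\nu$-a.e.\ $x$ and all $T$ by invariance and Fubini, so $\lambda(f_s x) = 0$ for Lebesgue-a.e.\ $s$, and then lower semicontinuity of $\lambda$ combined with... here one must be slightly careful since lower semicontinuity alone doesn't upgrade "a.e.\ $s$" to "all $s$". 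I would instead work directly with the set $\tilde B_\infty = \{x : \int_0^T \lambda(f_s x)\,ds = 0 \text{ for all } T > 0\}$, show it equals $B_\infty$ up to the relevant measure-theoretic considerations (or simply that it is invariant, measurable, of full $\nu$-measure, and contained in the closure needed), and note that the topological pressure $P(B_\infty,\ph)$ is insensitive to such modifications. The rest — the two applications of the variational principle and the observation that $\Phi$ splits coordinatewise for the product version — is routine.
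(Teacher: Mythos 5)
Your proposal follows essentially the same route as the paper: identify $B_\infty$ as a compact, flow-invariant set, show that the invariant measures $\nu$ with $\int\lambda\,d\nu=0$ are exactly those with $\nu(B_\infty)=1$, and invoke the variational principle for the subsystem $(B_\infty,\FFF)$. The one place you hesitate — whether lower semicontinuity lets you upgrade vanishing at a dense (or Lebesgue-a.e.) set of times to vanishing at all times — is in fact unproblematic, and the dense-set argument you start and then abandon is exactly the right one. If $\lambda(f_tx)=0$ for $t$ in a dense subset of $\R$ (e.g.\ $\Q$, or a full-measure set), then for any $s$ choose $t_n\to s$ in that subset; continuity of the flow gives $f_{t_n}x\to f_sx$, and lower semicontinuity together with $\lambda\ge 0$ forces $0\le\lambda(f_sx)\le\liminf_n\lambda(f_{t_n}x)=0$. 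This is precisely how the paper justifies $B_\infty=\bigcap_{t\in\Q}f_t\lambda^{-1}(0)$, making $B_\infty$ a countable intersection of compact sets (hence compact), after which the two variational-principle inequalities go through exactly as you describe. There is no need for the auxiliary set $\tilde B_\infty$, and no need to invoke Fubini; the ergodic decomposition step is also unnecessary once one notes directly that $\nu(B_\infty)=1$ for any invariant (not necessarily ergodic) $\nu$ with $\int\lambda\,d\nu=0$.
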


\begin{proof}
First observe that $B_\infty = \bigcap_{t\in\Q}f_t\lambda^{-1}(0)$ by continuity of the flow. Additionally, by lower semicontinuity and non-negativity, $\lambda^{-1}(0)$ is compact. Therefore, $B_\infty$ is a countable intersection of compact sets, and so, compact. Furthermore, by definition, $B_\infty$ is flow-invariant. Consequently, by the Variational Principle, we see
$$P(B_\infty,\ph) = \sup\{P_\nu(\ph)\mid \nu\in\MMM(B_\infty,\FFF)\}.$$
Now observe that $\nu\in\MMM(B_\infty,\FFF)$ if and only if $\int\lambda\,d\nu = 0$.
\end{proof}

\begin{cor}\label{cor: pressure gap sufficient}
For any $\lambda$-decomposition, if $P(B_\infty,\ph) < P(\ph)$, then there exists $\eta$ so that $P(B(\eta),\ph) < P(\ph)$, so long as the entropy map is upper semicontinuous.
\end{cor}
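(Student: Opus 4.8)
The statement to prove is Corollary~\ref{cor: pressure gap sufficient}: for any $\lambda$-decomposition, if $P(B_\infty,\ph) < P(\ph)$ and the entropy map is upper semicontinuous, then there exists $\eta > 0$ with $P(B(\eta),\ph) < P(\ph)$. This is an immediate consequence of the two preceding results, and my plan is to simply chain them together. First, Proposition~\ref{prop: pressure gap} gives the identity
$$\sup\left\{P_\nu(\ph)\mid \int\lambda\,d\nu = 0\right\} = P(B_\infty,\ph),$$
so the hypothesis $P(B_\infty,\ph) < P(\ph)$ is the same as saying $\sup\{P_\nu(\ph)\mid \int\lambda\,d\nu = 0\} < P(\ph)$. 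Then Proposition~\ref{prop: pressure decrease}, which requires upper semicontinuity of the entropy map, yields
$$\lim_{\eta\downarrow 0}P(B(\eta),\ph) \leq \sup\left\{P_\nu(\ph)\mid \int\lambda\,d\nu = 0\right\} = P(B_\infty,\ph) < P(\ph).$$

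**Extracting a single $\eta$.** The only remaining point is to pass from the statement about the limit as $\eta\downarrow 0$ to the existence of one specific $\eta$. Here I would note that $\eta\mapsto P(B(\eta),\ph)$ is monotone: if $\eta_1 < \eta_2$ then $B(\eta_1)\subset B(\eta_2)$, since the averaging condition $\frac{1}{t}\int_0^t \lambda(f_s x)\,ds < \eta_1$ is stronger, hence $P(B(\eta_1),\ph)\leq P(B(\eta_2),\ph)$ by monotonicity of pressure in the collection of orbit segments. Therefore the limit $\lim_{\eta\downarrow 0}P(B(\eta),\ph)$ is actually an infimum of the decreasing values $P(B(\eta),\ph)$, and being strictly below $P(\ph)$, some value $P(B(\eta),\ph)$ with $\eta$ small is already strictly below $P(\ph)$. (Even without monotonicity, if every $P(B(\eta),\ph)\geq P(\ph)$ then the limit would be $\geq P(\ph)$, a contradiction; but pointing out the monotonicity makes the argument cleanest and also clarifies that the limit exists.) This gives the desired $\eta$.

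**Main obstacle.** There is essentially no obstacle: the corollary is a formal combination of Propositions~\ref{prop: pressure decrease} and~\ref{prop: pressure gap}, and the content has already been done in those results. The only thing requiring a sentence of care is the quantifier change from "$\lim_{\eta\downarrow 0}$" to "there exists $\eta$", which is handled by the monotonicity of $\eta\mapsto P(B(\eta),\ph)$ noted above. If I wanted to be fully careful I would also remark that the hypothesis "for any $\lambda$-decomposition" is what guarantees $B(\eta)$ is exactly the collection of bad orbit segments at scale $\eta$ appearing in Proposition~\ref{prop: pressure decrease}, so that the two propositions are genuinely talking about the same object; but this is just bookkeeping. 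The proof I would write is three or four lines long.
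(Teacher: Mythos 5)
Your proposal is correct and is exactly the argument the paper intends (the corollary is stated without explicit proof as a chaining of Propositions~\ref{prop: pressure decrease} and~\ref{prop: pressure gap}). Your observation about monotonicity of $\eta\mapsto P(B(\eta),\ph)$ and the subsequent extraction of a single $\eta$ is the right way to close the minor quantifier gap, and your fallback remark that a limit strictly below $P(\ph)$ forces some value strictly below $P(\ph)$ is also valid.
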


\begin{rem}
In the case when $B_\infty$ is empty, it is easily seen that for all sufficiently small $\eta$, $B(\eta)$ does not contain arbitrarily long orbit segments, and so the inequality is satisfied trivially.
\end{rem}

We can now show that a pressure gap in the product implies a pressure gap in the base.

\begin{prop}\label{prop: product gap sufficient}
Let $\lambda$ give rise to a $\lambda$-decomposition. Then 
$$P(\tilde{B}_\infty,\Phi) < P(\Phi) \implies P(B_\infty,\ph) < P(\ph)$$
where $\tilde{B}_\infty = \bigcap_{t\in\R}f_t\tilde{\lambda}^{-1}(0)$.
\end{prop}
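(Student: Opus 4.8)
The plan is to relate the dynamics on $\tilde B_\infty$ to the dynamics on $B_\infty$ and then use Proposition~\ref{prop: pressure gap} together with Proposition~\ref{prop: projection of ES}. First I would observe the set-theoretic identity $\tilde B_\infty = B_\infty \times B_\infty$. Indeed, since $\tilde\lambda(x,y) = \lambda(x)\lambda(y)$ and $\lambda \geq 0$, we have $\tilde\lambda(x,y) = 0$ iff $\lambda(x) = 0$ or $\lambda(y) = 0$; iterating under $(f_t \times f_t)$ and intersecting over all $t$, the condition that $\tilde\lambda$ vanishes along the entire orbit of $(x,y)$ forces $\lambda$ to vanish along the entire orbit of at least one coordinate, but by invariance of $\lambda^{-1}(0)$ under the intersection this is only consistent if it vanishes along the orbit of \emph{both}. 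More carefully: $(x,y) \in \tilde B_\infty$ means $\lambda(f_t x)\lambda(f_t y) = 0$ for all $t$. If $(x,y) \notin B_\infty \times B_\infty$, say $x \notin B_\infty$, then $\lambda(f_{t_0}x) > 0$ for some $t_0$, hence $\lambda(f_{t_0}y) = 0$; but we need more than one value, so I'd instead argue at the level of measures, which is cleaner.

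So the cleaner route: by Proposition~\ref{prop: pressure gap} applied to $\tilde\lambda$ on $X \times X$, we have $P(\tilde B_\infty, \Phi) = \sup\{P_\nu(\Phi) \mid \int \tilde\lambda\, d\nu = 0\}$, and applied to $\lambda$ on $X$, $P(B_\infty,\ph) = \sup\{P_\mu(\ph) \mid \int\lambda\,d\mu = 0\}$. Now take any $\mu \in \MMM(X,\FFF)$ with $\int\lambda\,d\mu = 0$; then $\nu := \mu \times \mu \in \MMM(X\times X, \FFF\times\FFF)$ satisfies $\int\tilde\lambda\,d\nu = \int\int \lambda(x)\lambda(y)\,d\mu(x)d\mu(y) \le \big(\int\sup\lambda\big)\cdot 0 = 0$ — more precisely $\int\tilde\lambda\,d(\mu\times\mu) = \int\lambda\,d\mu \cdot \int\lambda\,d\mu = 0$ by Fubini since $\lambda$ is bounded and nonnegative. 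Hence $\nu$ is admissible for the product supremum, and $P_\nu(\Phi) = 2 P_\mu(\ph)$ since entropy and integral both split over the independent product. Taking the supremum over such $\mu$ gives $2 P(B_\infty,\ph) \le P(\tilde B_\infty,\Phi)$.

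Finally, recall that $P(\Phi) = 2P(\ph)$ (by \cite[Theorem 9.8]{Wa}, or as used in the proof of Proposition~\ref{prop: projection of ES}). Combining, the hypothesis $P(\tilde B_\infty,\Phi) < P(\Phi) = 2P(\ph)$ yields
$$2 P(B_\infty, \ph) \le P(\tilde B_\infty, \Phi) < 2 P(\ph),$$
so $P(B_\infty,\ph) < P(\ph)$, as claimed.

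The only delicate point — and the one I'd expect to be the main obstacle if one tried to argue via the set identity $\tilde B_\infty = B_\infty \times B_\infty$ directly — is ensuring the equality of the two sets rather than just an inclusion; the measure-theoretic argument above sidesteps it entirely by working with $P(B_\infty,\ph) = \sup\{P_\mu(\ph) : \int\lambda\,d\mu=0\}$ from Proposition~\ref{prop: pressure gap} and the independent joining $\mu \times \mu$, so there is nothing else substantive to check beyond the routine fact that entropy and the potential integral are additive over Cartesian products of measures (which is exactly the content underlying Propositions~\ref{prop: joining entropy} and~\ref{prop: projection of ES} in the equality case for the independent joining).
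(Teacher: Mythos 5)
Your measure-theoretic argument --- taking any $\mu$ with $\int\lambda\,d\mu = 0$, forming the independent product $\mu\times\mu$, noting $\int\tilde\lambda\,d(\mu\times\mu) = \left(\int\lambda\,d\mu\right)^2 = 0$ so that $\mu\times\mu$ is admissible for $\tilde B_\infty$, and splitting $P_{\mu\times\mu}(\Phi) = 2P_\mu(\ph)$ --- is correct and is exactly the proof given in the paper. You were right to abandon the putative set identity $\tilde B_\infty = B_\infty\times B_\infty$, which is genuinely false in general (the product $\lambda(f_tx)\lambda(f_ty)$ can vanish for all $t$ with neither factor vanishing identically, since the vanishing coordinate may alternate with $t$); only the easy inclusion $B_\infty\times B_\infty\subset\tilde B_\infty$, or equivalently the measure-level statement you actually use, is needed.
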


\begin{proof}
Let $\nu\in\MMM(B_\infty,\FFF)$. Then $\int\lambda\,d\nu = 0$, and so $\int\tilde{\lambda}\,d(\nu\x\nu) = 0$. Hence, $\nu\x\nu\in\MMM(\tilde{B}_\infty,\FFF\x\FFF)$. Consequently, if $P(\tilde{B}_\infty,\Phi) < P(\Phi)$, then we see
$$2P(\ph) > P(\tilde{B}_\infty,\Phi) \geq P_{\nu\x\nu}(\Phi) = 2P_\nu(\ph).$$
Applying the Variational Principle to $(B_\infty,\FFF)$ completes the proof.
\end{proof}

Combining this with Theorem \ref{general} and Corollary \ref{cor: prod exp}, we obtain the following result which directly implies Theorem \ref{Theorem A}.

\begin{thm}\label{thm: main thm}
Let $(X,\FFF)$ be a continuous flow on a compact metric space and $\ph : X\to\R$ a continuous potential. Suppose $P_{\exp}^\perp(\ph) < P(\ph)$ and $\lambda$ gives rise to a $\lambda$-decomposition such that 
\begin{enumerate}
	\item $\GGG(\eta)$ has specification for all $\eta > 0$;
	\item $\ph$ has the Bowen property on $\GGG(\eta)$ for all $\eta > 0$;
\end{enumerate}
while the product system $(X\x X,\FFF\x\FFF,\Phi)$ satisfies
\begin{enumerate}
	\item $P(\tilde{B}_\infty,\Phi) < 2P(\ph)$;
	\item The entropy map is upper semicontinuous.
\end{enumerate}
Then the unique equilibrium state for $(X,\FFF,\ph)$ has the $K$-property.
\end{thm}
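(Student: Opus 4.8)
\section*{Proof proposal}

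The plan is to read this statement as the endpoint of the bookkeeping done in Sections~\ref{Expansivity} and~\ref{Pressure}: it is essentially Theorem~\ref{general} of \cite{CaT} with its entropy-expansivity and product-expansivity hypotheses replaced by the weaker conditions now available, and with the product pressure gap rephrased via $\tilde B_\infty$. So the proof should be a verification that each hypothesis of the refined Theorem~\ref{general} follows from the hypotheses stated here, after which one invokes that theorem. I would organize it around three inputs: the pressure estimates, the expansivity, and the removal of entropy expansivity.

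For the pressure estimates, note first that $P(\Phi) = 2P(\ph)$ since $\Phi(x,y) = \ph(x) + \ph(y)$ (\cite[Theorem 9.8]{Wa}), so the hypothesis $P(\tilde B_\infty, \Phi) < 2P(\ph)$ reads $P(\tilde B_\infty, \Phi) < P(\Phi)$. The function $\tilde\lambda(x,y) = \lambda(x)\lambda(y)$ is non-negative and lower semicontinuous, being a product of non-negative lower semicontinuous functions, so Proposition~\ref{prop: pressure gap}, applied to the flow $\FFF\x\FFF$ with $\tilde\lambda$, identifies $P(\tilde B_\infty,\Phi)$ with $\sup\{P_\mu(\Phi) : \int\tilde\lambda\,d\mu = 0\}$, which is the product pressure gap required by Theorem~\ref{general}. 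Next, Proposition~\ref{prop: product gap sufficient} converts $P(\tilde B_\infty,\Phi) < P(\Phi)$ into $P(B_\infty,\ph) < P(\ph)$, and Proposition~\ref{prop: pressure gap} again rewrites this as $\sup\{P_\nu(\ph) : \int\lambda\,d\nu = 0\} < P(\ph)$, giving condition (3) of Theorem~\ref{thm: CT orig}. I would also record that upper semicontinuity of the entropy map on $X\x X$ descends to $X$: if $\mu_n\to\mu$ in $\MMM(X,\FFF)$ then $\mu_n\x\mu_n\to\mu\x\mu$ and $h_{\mu_n\x\mu_n}(\FFF\x\FFF) = 2h_{\mu_n}(\FFF)$, whence $\limsup_n h_{\mu_n}(\FFF) \le h_\mu(\FFF)$. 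Together with the hypotheses on $\GGG(\eta)$ and $P_{\exp}^\perp(\ph) < P(\ph)$, this makes $(X,\FFF,\ph)$ satisfy the hypotheses of Theorem~\ref{thm: CT orig}, so the base system has a unique equilibrium state.

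For expansivity, the hypothesis of Theorem~\ref{general} that every equilibrium state of $(X\x X,\FFF\x\FFF,\Phi)$ be product expansive is precisely what Corollary~\ref{cor: prod exp} delivers from $P_{\exp}^\perp(\ph) < P(\ph)$. The entropy-expansivity of $(X,\FFF)$ assumed in Theorem~\ref{general} enters only to apply Ledrappier's criterion and to guarantee $P(\Phi,\gamma) = P(\Phi)$ at a fixed small scale $\gamma$; the first is handled for arbitrary systems by Theorem~\ref{NewLedr}, and the second by the remark following it (a modification of \cite[Proposition 3.7]{CT}). Since the hypotheses on $\GGG(\eta)$ here supply exactly the strong specification and Bowen property that Theorem~\ref{general} requires, all of its refined hypotheses are met, and it yields that the unique equilibrium state of $(X,\FFF,\ph)$ has the $K$-property.

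I do not expect a genuine obstacle: the substantive work lives in Theorem~\ref{NewLedr}, Corollary~\ref{cor: prod exp}, and Proposition~\ref{prop: product gap sufficient}, and this final theorem merely glues them to Theorem~\ref{general}. The one point that needs care is checking the applicability of Proposition~\ref{prop: pressure gap} on the product, namely that $\tilde\lambda$ is lower semicontinuous and that $\tilde B_\infty$ is the compact flow-invariant set whose invariant measures are exactly those with $\int\tilde\lambda\,d\mu = 0$, and, implicitly inside Theorem~\ref{general}, that the induced $\tilde\lambda$-decomposition on $X\x X$ inherits strong specification and the Bowen property for $\Phi$ from the corresponding properties of $\GGG(\eta)$ for $\ph$.
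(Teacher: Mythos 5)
Your proposal is correct and follows essentially the same route as the paper, which proves Theorem~\ref{thm: main thm} in a single sentence by combining Proposition~\ref{prop: product gap sufficient}, Theorem~\ref{general}, Corollary~\ref{cor: prod exp}, and the preceding discussion of how Theorem~\ref{NewLedr} removes the entropy-expansivity hypothesis. Your unpacking of the three inputs (pressure estimates via Propositions~\ref{prop: pressure gap} and~\ref{prop: product gap sufficient}, product expansivity via Corollary~\ref{cor: prod exp}, and the Ledrappier criterion via Theorem~\ref{NewLedr}) matches the intended argument, and the extra observations you record---that $\tilde\lambda$ is lower semicontinuous and that upper semicontinuity of entropy on the product descends to the base---are valid details the paper leaves implicit.
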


Of particular note is that any pressure gap is a $C^0$-open condition on the space of potentials. This fact is encapsulated in the following proposition.

\begin{prop}\label{prop: open pressure}
	Suppose $A$ is compact and invariant, and $P(A,\ph) < P(\ph)$. Then for all continuous $\psi$ such that $2\norm{\ph - \psi} < P(\ph) - P(A,\ph)$ and $c\in\R$ we have $P(A,\psi + c) < P(\psi + c)$.
\end{prop}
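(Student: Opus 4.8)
The plan is to show that both the restricted pressure $P(A,\cdot)$ and the ambient pressure $P(\cdot)$ are $1$-Lipschitz in the potential for the uniform norm, that each transforms additively under adding a constant, and then to chain these two facts against the hypothesis $2\norm{\ph-\psi}<P(\ph)-P(A,\ph)$. The whole argument reduces to bookkeeping with the Variational Principle.

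First I would record the two descriptions of the pressures involved. Since $A$ is compact and $\FFF$-invariant, the Variational Principle gives $P(A,\chi)=\sup\{P_\nu(\chi)\mid\nu\in\MMM(A,\FFF)\}$ for every continuous $\chi$, exactly as in the proof of Proposition \ref{prop: pressure gap}, while $P(\chi)=\sup\{P_\nu(\chi)\mid\nu\in\MMM(X,\FFF)\}$. Using this I would dispose of the constant $c$: for every invariant measure $\nu$ one has $P_\nu(\chi+c)=h_\nu(\FFF)+\int(\chi+c)\,d\nu=P_\nu(\chi)+c$, so taking suprema gives $P(A,\psi+c)=P(A,\psi)+c$ and $P(\psi+c)=P(\psi)+c$; hence $P(A,\psi+c)<P(\psi+c)$ is equivalent to $P(A,\psi)<P(\psi)$, and it suffices to treat $c=0$.

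Next I would establish the Lipschitz estimates. For any invariant $\nu$, $\abs{\int\psi\,d\nu-\int\ph\,d\nu}\le\norm{\ph-\psi}$, so $P_\nu(\psi)\le P_\nu(\ph)+\norm{\ph-\psi}$; taking the supremum over $\MMM(A,\FFF)$ yields $P(A,\psi)\le P(A,\ph)+\norm{\ph-\psi}$, and taking the supremum over $\MMM(X,\FFF)$ in the reverse direction yields $P(\psi)\ge P(\ph)-\norm{\ph-\psi}$. Combining these with the hypothesis gives
$$P(A,\psi)\le P(A,\ph)+\norm{\ph-\psi}<P(\ph)-\norm{\ph-\psi}\le P(\psi),$$
where the strict middle inequality is precisely $2\norm{\ph-\psi}<P(\ph)-P(A,\ph)$. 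By the previous paragraph this also gives $P(A,\psi+c)<P(\psi+c)$ for all $c$.

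I do not expect any genuine obstacle here; the argument is routine. The only point deserving a little care is the identification of the ``upper-capacity'' pressure $P(A,\cdot)$ of the compact invariant set $A$ with its variational pressure $\sup_{\nu\in\MMM(A,\FFF)}P_\nu(\cdot)$, since that is what lets the same uniform-continuity estimate apply verbatim to both $P(A,\cdot)$ and $P(\cdot)$; this is exactly the identity already invoked in the proof of Proposition \ref{prop: pressure gap}, and the identical reasoning works in the discrete-time setting of Theorem \ref{Theorem B} as well.
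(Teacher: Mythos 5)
Your proof is correct and follows essentially the same route as the paper: reduce to $c=0$ via additivity of pressure under constants, then chain the $1$-Lipschitz dependence of both $P(A,\cdot)$ and $P(\cdot)$ on the potential against the hypothesis $2\norm{\ph-\psi}<P(\ph)-P(A,\ph)$. The only cosmetic difference is that you derive the Lipschitz bounds from the Variational Principle, whereas the paper simply cites \cite[Theorem 9.7.iv]{Wa} and notes it applies to compact invariant sets.
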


\begin{proof}
	Recall that $P(A,\psi + c) = P(A,\psi) + c$, so it suffices to prove this for $c = 0$. For all potentials $\psi$, $\abs{P(\ph) - P(\psi)} \leq \norm{\ph - \psi}$ by \cite[Theorem 9.7.iv]{Wa}. This holds for the pressure of any compact, invariant set as well, and so for any $\psi$ sufficiently close to $\ph$, we see that
	$$P(A,\psi) \leq P(A,\ph) + \norm{\ph - \psi} < P(\ph) - 2\norm{\ph - \psi} + \norm{\ph - \psi} \leq P(\psi).   \qedhere$$
\end{proof}
Consequently, showing a pressure gap for the measure of maximal entropy implies the pressure gap for all potentials sufficiently close to constant.

\subsection{Measure Estimate}

\begin{thm}\label{thm: measure estimate}
	Let $\mu$ be the unique equilibrium state of $(X,\FFF,\ph)$, and suppose that $\mu(\lambda^{-1}(0)) < \frac{1}{2}$. If the entropy map is upper semicontinuous on the product system, then $P(\tilde{B}_\infty,\Phi) < 2P(\ph)$.
\end{thm}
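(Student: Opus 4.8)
The plan is to bound $P(\tilde B_\infty,\Phi)$ by splitting the invariant measures on $\tilde B_\infty$ according to how much mass each marginal places on $\lambda^{-1}(0)$, and then exploiting the hypothesis $\mu(\lambda^{-1}(0)) < \tfrac12$ to rule out the worst case. First I would recall from Proposition \ref{prop: pressure gap} (applied to $\tilde\lambda$) that $P(\tilde B_\infty,\Phi) = \sup\{P_\nu(\Phi) \mid \int\tilde\lambda\,d\nu = 0\}$, and that by the Variational Principle on the compact invariant set $\tilde B_\infty$ the supremum is attained, say at an ergodic $\nu$ (ergodic extreme points suffice since $P_\nu(\Phi)$ is affine in $\nu$ and upper semicontinuity gives a maximizer). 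The condition $\int\tilde\lambda\,d\nu = 0$ means $\int\lambda(x)\lambda(y)\,d\nu(x,y) = 0$, so $\nu$ is supported on $(\lambda^{-1}(0)\x X)\cup(X\x\lambda^{-1}(0))$; by ergodicity and invariance of $\lambda^{-1}(0)$ we may assume WLOG $(\pi_1)_*\nu$ is supported on $\lambda^{-1}(0)$, i.e. $(\pi_1)_*\nu \in \MMM(\lambda^{-1}(0),\FFF)$.

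Next I would use that $\nu$ is a joining of $\nu_1 := (\pi_1)_*\nu$ and $\nu_2 := (\pi_2)_*\nu$, so by Proposition \ref{prop: projection of ES}, $P_\nu(\Phi) \le P_{\nu_1}(\ph) + P_{\nu_2}(\ph) \le P(\lambda^{-1}(0),\ph) + P(\ph)$. Thus it suffices to show $P(\lambda^{-1}(0),\ph) < P(\ph)$, i.e. that no invariant measure fully supported on $\lambda^{-1}(0)$ is an equilibrium state for $\ph$. Here is where the measure hypothesis enters: if some $\rho \in \MMM(\lambda^{-1}(0),\FFF)$ satisfied $P_\rho(\ph) = P(\ph)$, then $\rho$ would be an equilibrium state for $\ph$, hence $\rho = \mu$ by uniqueness; but then $\mu(\lambda^{-1}(0)) = 1 > \tfrac12$, contradicting the hypothesis. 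Therefore $P(\lambda^{-1}(0),\ph) < P(\ph)$, and combining, $P(\tilde B_\infty,\Phi) \le P(\lambda^{-1}(0),\ph) + P(\ph) < 2P(\ph)$.

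The main subtlety to get right is the reduction to an \emph{ergodic} maximizer and the WLOG step that puts all the $\lambda$-vanishing on a single coordinate: one needs that the set of measures with $\int\tilde\lambda\,d\nu=0$ is exactly $\MMM(\tilde B_\infty,\FFF\x\FFF)$ (which is Proposition \ref{prop: pressure gap} again), that $\tilde B_\infty$ is genuinely compact and invariant so the Variational Principle applies, and that an ergodic component of a measure on $\tilde B_\infty$ still kills $\tilde\lambda$ and still has one marginal on $\lambda^{-1}(0)$ — this uses $\lambda^{-1}(0)$ being compact and invariant. The hypothesis that the entropy map on the product is upper semicontinuous is what guarantees the sup defining $P(\tilde B_\infty,\Phi)$ is actually a max (and is presumably also needed so that ``$\mu$ is the unique equilibrium state'' is meaningful via Theorem \ref{thm: CT orig}); I would make sure the ergodic-decomposition argument is phrased so that it only needs affineness of $\nu\mapsto P_\nu(\Phi)$ plus the existence of a maximizer, both of which are in hand. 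Everything else is a short chain of inequalities via Proposition \ref{prop: projection of ES} and the Variational Principle; I expect no serious obstacle beyond bookkeeping.
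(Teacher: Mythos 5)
There is a genuine gap in your reduction. The step ``by ergodicity and invariance of $\lambda^{-1}(0)$ we may assume WLOG that $(\pi_1)_*\nu$ is supported on $\lambda^{-1}(0)$'' is not valid: $\lambda^{-1}(0)$ is \emph{not} invariant in general (in both applications in the paper it is the closure of a small ball around a fixed point), so neither $\lambda^{-1}(0)\x X$ nor $X\x\lambda^{-1}(0)$ is an invariant set, and the ergodicity dichotomy you are invoking does not apply. More fundamentally, $\tilde{B}_\infty$ is strictly larger than $(B_\infty\x X)\cup(X\x B_\infty)$: a point $(x,y)\in\tilde{B}_\infty$ only needs, for each time $t$, \emph{at least one} of $\lambda(f_tx)$, $\lambda(f_ty)$ to vanish, and which coordinate vanishes can change with $t$. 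A toy example (discrete time): $X=\{a,b\}$ with $f$ swapping $a$ and $b$ and $\lambda(a)=0$, $\lambda(b)=1$; then $B_\infty=\emptyset$, but $\{(a,b),(b,a)\}\subset\tilde{B}_\infty$ carries an ergodic measure neither of whose marginals is supported on $\lambda^{-1}(0)$. This is exactly why the hypothesis is $\mu(\lambda^{-1}(0))<\tfrac12$ and not merely $<1$: your argument, if it worked, would only ever need $\mu(\lambda^{-1}(0))<1$, which is a sign the reduction discards the essential difficulty.

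The correct route (the one the paper takes) replaces your WLOG with a union bound. Any equilibrium state $\nu$ for $(X\x X,\FFF\x\FFF,\Phi)$ has both marginals equal to $\mu$ by Proposition \ref{prop: projection of ES} and uniqueness, so $\nu(\tilde{B}_\infty)\le\nu(\lambda^{-1}(0)\x X)+\nu(X\x\lambda^{-1}(0))=2\mu(\lambda^{-1}(0))<1$. On the other hand, upper semicontinuity of the entropy map on the product gives an equilibrium state $m$ for the restricted system $(\tilde{B}_\infty,\FFF\x\FFF,\Phi)$ with $m(\tilde{B}_\infty)=1$; hence $m$ cannot be an equilibrium state for the full product, and $P(\tilde{B}_\infty,\Phi)=P_m(\Phi)<P(\Phi)=2P(\ph)$. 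Your opening observations (compactness and invariance of $\tilde{B}_\infty$, the Variational Principle, attainment of the sup via upper semicontinuity, and the use of Proposition \ref{prop: projection of ES}) are all fine and reusable; it is only the passage from ``$\nu$ lives on $\tilde{\lambda}^{-1}(0)$'' to ``one marginal lives on $\lambda^{-1}(0)$'' that must be replaced by the inequality $\nu_1(\lambda^{-1}(0))+\nu_2(\lambda^{-1}(0))\ge\nu(\tilde{B}_\infty)$.
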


\begin{proof}
	Let $\nu$ be an equilibrium state for $(X\x X,\FFF\x\FFF)$ equipped with potential $\Phi$. By Proposition \ref{prop: projection of ES}, the projections onto each coordinate, $\nu_i := \nu\circ\pi_i^{-1}$ are both equilibrium states for $(X,\FFF,\ph)$. Therefore, $\nu_i = \mu$ for $i=1,2$. Consequently, as
	$$\tilde{B}_\infty = \bigcap_{t\in\R}(f_t\x f_t)(\lambda^{-1}(0)\x X \cup X\x\lambda^{-1}(0)) \subset \lambda^{-1}(0)\x X\cup X\x\lambda^{-1}(0)$$
	we have
	$$\nu(\tilde{B}_\infty) \leq \nu(\lambda^{-1}(0)\x X) + \nu(X\x \lambda^{-1}(0)) = \nu_1(\lambda^{-1}(0)) + \nu_2(\lambda^{-1}(0)) = 2\mu(\lambda^{-1}(0)).$$
	Therefore, if $\mu(\lambda^{-1}(0)) < \frac{1}{2}$, then $\nu(\tilde{B}_\infty) < 1$. Finally, let $m$ be an equilibrium state for $(\tilde{B}_\infty,\FFF\x\FFF,\Phi)$. This exists as $\tilde{B}_\infty$ is compact and invariant, and the entropy map is upper semicontinuous. Then $m(\tilde{B}_\infty) = 1$, and so $P(\tilde{B}_\infty,\Phi) = P_m(\Phi) < P(\Phi)$, completing our proof.
\end{proof}

This approach is particularly useful in the setting of Ma\~n\'e diffeomorphisms, which we will discuss further in \S \ref{Applications}.

\subsection{Entropy Production Argument}

The main result of this section is an argument that creates an entropy gap in systems with global specification. This is a folklore result, and we limit ourselves to the MME case. However, it should be thought of as a blueprint for general pressure production arguments, with weaker assumptions on the system, such as non-uniform specification and non-zero potentials. This style of argument has been previously carried out in \cite{CFT} and \cite{BCFT}. However, those arguments were tailored to the specific settings being considered at the time, and we believe there is value to the general statement presented here.

\begin{thm}\label{prop: entropy production}
Suppose $(X,\FFF)$ has specification and let $A$ be a proper subset of $X$ which is compact and invariant. Take $\eps$ such that there exists $y\notin A$ with $d(A,y) > 3\eps$. If there exists $C > 0$ so that for every maximal $(t,3\eps)$-separated set $E_t$, we have $\# E_t \geq Ce^{th(A)}$, then $h(A) < h(X)$.
\end{thm}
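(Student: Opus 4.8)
The plan is to use specification to manufacture a $(T,\eps)$-separated subset of $X$ whose exponential growth rate strictly exceeds $h(A)$; since $h(X)\ge h(X,\eps)$ for every $\eps>0$, this gives $h(A)<h(X)$.

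First I would assemble the ingredients. As $X$ has specification, fix the gap length $\tau=\tau(\eps)$ associated to the scale $\eps$. Since $A$ is compact and invariant, $s\mapsto d(f_sy,A)$ is continuous and exceeds $3\eps$ at $s=0$, so there is $\rho>0$ with $d(f_sy,A)>3\eps$ for all $s\in[0,\rho]$; call $(y,\rho)$ the \emph{detour block}. Then fix a (large) time $t>0$ and a maximal $(t,3\eps)$-separated set $E_t$, so that $\#E_t\ge Ce^{th(A)}$ by hypothesis; note that the elements of $E_t$ shadow orbit segments that remain within $\eps$ of $A$ (for the reading in which $E_t\subset A$; if instead $E_t$ is taken in $X$, one replaces this by using a coordinate of the relevant element that stays far from $A$).

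Next I would code and shadow. For integers $p,q\ge 1$, form all words obtained by concatenating $p$ blocks chosen from $E_t$ together with $q$ detour blocks in one of the $\binom{p+q}{q}$ possible interleavings, inserting a gap of length $\tau$ between consecutive blocks; there are $\binom{p+q}{q}(\#E_t)^p$ such words, each of total length $T=pt+q\rho+(p+q-1)\tau$. By specification at scale $\eps$ each word is $\eps$-shadowed by a point of $X$, and distinct words yield $(T,\eps)$-separated shadowing points: if two words first differ at a slot carrying two distinct elements of $E_t$, then $(t,3\eps)$-separation of $E_t$ and the $\eps$-shadowing force an $\eps$-discrepancy there; if they first differ at a slot where one has a detour block and the other an $E_t$-block, then during the corresponding length-$\rho$ window one shadowing point stays within $\eps$ of the $y$-orbit (hence more than $2\eps$ from $A$) while the other stays within $\eps$ of $A$, again producing an $\eps$-discrepancy. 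Hence $h(X)\ge h(X,\eps)\ge\limsup_{p\to\infty}\tfrac1T\log\bigl(\binom{p+q}{q}(\#E_t)^p\bigr)$. Setting $q=\theta p$ and letting $p\to\infty$, Stirling's formula gives the growth rate
\[ R(\theta)=\frac{(1+\theta)\log(1+\theta)-\theta\log\theta+\log\#E_t}{(t+\tau)+\theta(\rho+\tau)}, \]
so $h(X)\ge\sup_{\theta\ge0}R(\theta)$. Since the $\theta$-derivative of the combinatorial numerator term is $\log(1+1/\theta)\to+\infty$ as $\theta\downarrow0$, $R$ is strictly increasing at $\theta=0$, whence $\sup_\theta R(\theta)>R(0)=\tfrac{\log\#E_t}{t+\tau}\ge\tfrac{th(A)+\log C}{t+\tau}$; with $\rho$ taken small and $t$ chosen appropriately, one pushes $\sup_\theta R(\theta)$ past $h(A)$.

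The delicate point — and the only one that is not routine bookkeeping — is precisely this last balancing: the detour blocks must be ``worth more'' combinatorially (through the freedom $\binom{p+q}{q}$ of where to place them) than the time they consume, in particular than the $q$ extra specification gaps of length $\tau$ they introduce. It is here that the quantitative lower bound $\#E_t\ge Ce^{th(A)}$ — valid for the maximal separated set at scale $3\eps$ and for a well-chosen $t$ — is essential, whereas the shadowing and separation estimates are standard. The proper-subset hypothesis and the far point $y$ enter only to guarantee the detour blocks exist and genuinely distinguish the constructed orbits from those tracking $A$.
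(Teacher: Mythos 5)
Your overall strategy---specification plus detours plus a counting/optimization argument---matches the paper's, and your separation and shadowing checks are fine, but the closing ``balancing'' step fails, and the obstruction is structural. Because your good blocks all have the same fixed length $t$, you pay a specification gap $\tau$ between \emph{every} pair of consecutive blocks, whether or not a detour intervenes. With the worst-case $\log\#E_t = th(A)+\log C$, the inequality $R(\theta)>h(A)$ reduces to
\[ (1+\theta)\log(1+\theta)-\theta\log\theta - h(A)\theta(\rho+\tau) \;>\; h(A)\tau - \log C. \]
The right-hand side is the baseline gap tax and is independent of $\theta$, $t$, and $\rho$; the left-hand side maximizes at $\theta^*=1/(e^{c}-1)$ with $c=h(A)(\rho+\tau)$ to the value $c-\log(e^c-1)$, which is at most $\log 2$ once $c\ge\log 2$ and decays like $e^{-c}$. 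So whenever $h(A)\tau - \log C\ge 1$ (for instance $h(A)\tau\ge 1$ and $C\le 1$), no choice of $\theta$, $t$, or $\rho$ makes $\sup_\theta R(\theta)$ exceed $h(A)$; both the deficit and the combinatorial gain scale like $1/t$, so the comparison is genuinely $t$-independent.

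The paper avoids this by taking good blocks of \emph{variable} length. It fixes a grid step $T$ and a small detour fraction $\alpha$, chooses the detour times as a subset $\mathcal{I}$ of the $N-1$ multiples of $T$, and puts one long $A$-block of length $k_i$ (with $\#E_{k_i}\ge Ce^{k_ih(A)}$) between consecutive detour times. The only gap cost is then $\alpha(2\tau+1)h(A)/T$ per unit time, proportional to the detour frequency, with no per-good-block tax. More importantly, the combinatorial gain is $\binom{N-1}{\alpha N-1}$ where $N$ is the \emph{total time divided by $T$}: the $\alpha N$ detours may land in any of $N$ slots, not merely at the $\sim p$ boundaries between fixed-length blocks. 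The per-unit-time gain $\approx -\alpha\log\alpha/T$ then dominates the cost once $\alpha < Ce^{-(2\tau+1)h(A)}$. The uniformity of the hypothesis $\#E_k\ge Ce^{kh(A)}$ over all $k$ is exactly what makes variable-length blocks admissible; by committing to a single $t$ you under-exploit this, and the resulting per-block deficit cannot be recovered.
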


\begin{rem}
The above condition on $\# E_t$ is satisfied if, for instance, $h(A,6\eps) = h(A)$ \cite[Lemmas 4.1 and 4.2]{CT}.
\end{rem}

\begin{proof}
Let $\tau$ be the specification constant for $\eps$, let $T > 2\tau+1$, and take $\alpha > 0$ to be small and rational. Choose $N\in\N$ to be large, so that $\alpha N\in\N$. We will now show that by ``interweaving'' between $A$ and $X\setminus A$ for a set of prescribed times, we can create a $(NT,\eps)$-separated set whose partition sum is larger than the entropy of $A$ for arbitrarily large $N$.

Choose $\mathcal{I}\subset \{T,2T,\cdots, (N-1)T\}$ such that $\#\mathcal{I} = \alpha N - 1$. This will be one set of interweaving times, and we will define a set of $(NT,\eps)$-separated points based on it. Writing $\mathcal{I} = \{j_1T,j_2T,\cdots, j_{\alpha N - 1}T\}$, set $k_1 = j_1T$, $k_{\alpha_N} = NT - (2\tau + 1)$ and $k_i = (j_i - j_{i-1})T - (2\tau + 1)$ for $2\leq i \leq \alpha N - 1$. For each $k_i$, define $E_{k_i}$ to be a $(k_i,3\eps)$-separated subset of $A$ of maximal cardinality. Then, define the map $\Pi : E_{k_1}\x\cdots \x E_{k_{\alpha N}}\to X$ to be a point $z$, guaranteed by specification, that $\eps$-shadows $(x_1,k_1)$, followed by $(y,1)$, followed by $(x_2,k_2)$, and so on. As $\sum_{i=1}^{\alpha N}k_i + (\alpha N - 1)(2\tau + 1) = NT$, and each $E_{k_i}$ is $(k_i,3\eps)$-separated, it follows that the image of $\Pi$ is $(NT,\eps)$-separated. For ease of notation, we will refer to this $(NT,\eps)$-separated set by $E_{\mathcal{I}}$.

We now need to show that given two different sets of interweaving times, $\mathcal{I}\neq \mathcal{I}'$, the union $E_{\mathcal{I}}\cup E_{\mathcal{I}'}$ is $(NT,\eps)$-separated. Consider $j_iT\in \mathcal{I}\setminus \mathcal{I}'$. Then, given $w\in E_{\mathcal{I}}$, by construction, $d(f_{j_iT + \tau}w,y) \leq \eps$. However, for all $z\in E_{\mathcal{I}'}$, $d(f_{j_iT + \tau}z, A) \leq \eps$ as $A$ is invariant. Therefore, as $d(y,A) \geq 3\eps$, it follows that for all such $z$, $d_{NT}(w,z) \geq \eps$. Thus, we have shown that $E_{\mathcal{I}}\cup E_{\mathcal{I}'}$ is $(NT,\eps)$-separated for any two distinct sets of interweaving times.

By assumption, there exists $C > 0$ such that $\# E_k \geq Ce^{kh(A)}$ for all $k$. Therefore,
$$\# E_{\mathcal{I}} = \prod_{i=1}^{\alpha N} \# E_{k_i} \geq C^{\alpha N}e^{(NT - (\alpha N - 1)(2\tau + 1))h(A)} = C^{\alpha N}e^{NTh(A)} e^{-(\alpha N - 1)(2\tau + 1)h(A)}.$$
As there are $\binom{N}{\alpha N} \geq \alpha e^{-N\alpha\log \alpha}$ different ways to choose a set $\mathcal{I}$, we see that
$$\# \bigcup_{\mathcal{I}} E_{\mathcal{I}} \geq C^{\alpha N}\alpha e^{NTh(A)} e^{-(\alpha N - 1)(2\tau + 1)h(A)}e^{-N\alpha\log\alpha}.$$
Because $\bigcup_{\mathcal{I}}E_{\mathcal{I}}$ is $(NT,\eps)$-separated, we see that
$$h(X,\eps) \geq \lim\limits_{N\to\infty}\frac{1}{NT}\log\#\bigcup_{\mathcal{I}} E_{\mathcal{I}} \geq h(A) - \frac{\alpha(2\tau + 1)h(A)}{T} - \frac{\alpha\log\alpha}{T} + \frac{\alpha\log C}{T}.$$
Since $\alpha$ is chosen independently, by taking $\alpha < Ce^{-(2\tau + 1)h(A)}$, we see that $h(X) \geq h(X,\eps) > h(A)$, as desired.
\end{proof}
This result holds in the discrete-time setting as well (with a simplified proof), and we present the following corollary in that setting, as we will use it in \S \ref{Applications}.
\begin{cor}\label{cor: pressure gap}
If $(X,f)$ is expansive and has specification, and $\lambda$ gives rise to a $\lambda$-decomposition with $B_\infty \subsetneq X$, then $h(B_\infty) < h(f)$.
\end{cor}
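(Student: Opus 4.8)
The plan is to obtain this as a direct application of the discrete-time form of Theorem \ref{prop: entropy production}, taken with $A = B_\infty$, so that the only real work is verifying its hypotheses.

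First I would record that $B_\infty = \bigcap_{n\in\ZZ} f^n\lambda^{-1}(0)$ is compact and invariant: since $\lambda$ is non-negative and lower semicontinuous, $\lambda^{-1}(0)$ is closed and hence compact, so $B_\infty$ is a countable intersection of compact sets, and it is $f$-invariant by construction. If $B_\infty = \es$ the statement is immediate, so assume $B_\infty \ne \es$. By hypothesis $B_\infty \subsetneq X$, so $X\setminus B_\infty$ is a non-empty open set; fix $y\notin B_\infty$, and note that $d(B_\infty,y) > 0$ by compactness of $B_\infty$.

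Next comes the choice of scale $\eps$, which is the only place expansivity is used. Let $c$ be an expansivity constant for $f$, and choose $\eps > 0$ with $3\eps < d(B_\infty,y)$ and $6\eps < c$. Because $f$ restricted to the compact invariant set $B_\infty$ is again expansive with constant $c$, we have $h(B_\infty,6\eps) = h(B_\infty)$, and so, as noted in the remark following Theorem \ref{prop: entropy production} (via \cite[Lemmas 4.1 and 4.2]{CT}), there is $C > 0$ with $\#E_n \geq Ce^{nh(B_\infty)}$ for every maximal $(n,3\eps)$-separated subset $E_n$ of $B_\infty$. Now every hypothesis of the discrete-time version of Theorem \ref{prop: entropy production} is in place --- $(X,f)$ has specification, $B_\infty$ is compact, invariant and proper, there is a point $y$ with $d(B_\infty,y) > 3\eps$, and the separated-set lower bound holds --- so the theorem yields $h(B_\infty) < h(X) = h(f)$. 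I do not anticipate a genuine obstacle: the two points needing a little care are the choice of $\eps$ that makes the cardinality hypothesis valid (exactly where expansivity enters) and the passage of Theorem \ref{prop: entropy production} to the discrete-time setting, which the paper asserts goes through with a simplified proof.
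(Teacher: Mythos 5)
Your proof is correct and is exactly the argument the paper intends: Corollary~\ref{cor: pressure gap} is stated immediately after Theorem~\ref{prop: entropy production} (and its remark) with no separate proof, and the intended route is precisely to take $A = B_\infty$, use compactness and invariance of $B_\infty$ (established in the proof of Proposition~\ref{prop: pressure gap}), use expansivity of $f\vert_{B_\infty}$ to get $h(B_\infty,6\eps)=h(B_\infty)$ at a small enough scale so that the remark supplies the cardinality bound, and then invoke the discrete-time version of the theorem. The only cosmetic omission is that you might note explicitly that the convention $h(\es)=-\infty$ handles the empty case, but this is immaterial.
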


\section{Discrete-time Cases}\label{Discrete}
The original version of this theorem, as well as the improvements in this paper, have all been in the continuous time setting. However, all of the arguments carry over easily to the discrete time setting, and in some cases, simplify considerably. We will show that the expansivity issues that arise in the flow case are no longer problematic. Consequently, we are able to directly apply the discrete-time theorem from \cite{CT} in the product system, rather than having to adapt arguments and technical expansivity lemmas as we do in the flow setting. Recall the theorem that we wish to apply.

\begin{thm}[\cite{CT}]\label{thm: CT discrete}
Let $X$ be a compact metric space, $f : X\to X$ a homeomorphism, and $\ph : X\to\R$ a continuous potential. Suppose that $P_{\exp}^\perp(\ph) < P(\ph)$ and that there exists a decomposition $(\PPP,\GGG,\SSS)$ such that
\begin{enumerate}
	\item $\GGG$ has specification at all scales
	\item $\ph$ has the Bowen property on $\GGG$
	\item $P(\PPP\cup\SSS,\ph) < P(\ph)$.
\end{enumerate}
Then there is a unique equilibrium state $\mu$ for $(X,f,\ph)$. 
\end{thm}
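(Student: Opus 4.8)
The plan is to transpose Bowen's classical three-step argument for uniqueness of equilibrium states to the non-uniform framework of orbit decompositions, following the strategy of \cite{CT}. Fix once and for all a scale $\delta > 0$ small enough that $\ph$ has the Bowen property on $\GGG$ at scale (a large multiple of) $\delta$, and small enough relative to $P_{\exp}^\perp(\ph) < P(\ph)$ that the scale-$\delta$ pressure $P(X,\ph,\delta)$ already equals $P(\ph)$ (the base-system analogue of the argument behind \cite[Proposition 3.7]{CT}). From here everything is carried out with $(n,\delta)$-separated sets, writing $S_n\ph(x) = \sum_{i=0}^{n-1}\ph(f^i x)$, $B_n(x,\delta) = \{y : d(f^ix,f^iy) < \delta,\ 0 \le i < n\}$, and $\Lambda_n(\CCC,\ph,\delta)$ for the obvious discrete analogue of the partition sum. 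The goal is to produce a measure $\mu$ carrying matching lower and upper ``non-uniform Gibbs'' bounds at scale $\delta$, and then show it absorbs every ergodic equilibrium state.

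First I would establish the partition-sum estimates. The decomposition sends each orbit segment of length $n$ to a triple $(p,g,s)$ with $p+g+s=n$; summing $e^{S_n\ph}$ over a separated set and reorganizing by $(p,g,s)$ gives $\Lambda_n(X,\ph,\delta) \le C\sum_{p+g+s=n}\Lambda_p(\PPP,\ph,\delta)\Lambda_g(\GGG,\ph,\delta)\Lambda_s(\SSS,\ph,\delta)$ up to a bounded combinatorial factor. Since $P(\PPP\cup\SSS,\ph) < P(\ph)$, restricting prefix and suffix lengths to a bounded window $\{p,s\le M\}$ costs only $e^{o(n)}$ while the tails are exponentially negligible for large $M$; combined with the choice of $\delta$ this forces $\lim_{M\to\infty}P(\GGG^M,\ph,\delta) = P(\ph)$, where $\GGG^M := \PPP^{\le M}\GGG\SSS^{\le M}$. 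Conversely, specification on $\GGG$ glues any finite list of good segments into one shadowing orbit, the Bowen property controls the distortion of $S_n\ph$ along the shadow, and expansivity at scale $\delta$ makes the gluing map injective up to bounded multiplicity; hence $n\mapsto\log\Lambda_n(\GGG,\ph,\delta)$ is almost superadditive, so $P(\GGG,\ph,\delta) = P(\ph)$ and the partition sums over $\GGG$, $\GGG^M$, and $X$ all agree up to subexponential factors.

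Next I would construct the measure. Let $\mu_n = \Lambda_n(\GGG^M,\ph,\delta)^{-1}\sum_{x\in E_n}e^{S_n\ph(x)}\delta_x$ over a maximal $(n,\delta)$-separated set $E_n\subset\GGG^M_n$, and take $\mu$ to be a weak-$*$ limit point of $\frac1m\sum_{k=0}^{m-1}(f^k)_*\mu_n$. The standard computation (Walters, Chapter 9, with the sum running over $\GGG^M$ rather than all of $X$) together with the partition-sum estimate yields $h_\mu(f)+\int\ph\,d\mu \ge P(\GGG^M,\ph,\delta)$, so letting $M\to\infty$ shows $\mu$ is an equilibrium state. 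The gluing construction also gives the two Gibbs-type bounds: an upper bound $\mu(B_n(x,\delta)) \le K_1 e^{-nP(\ph)+S_n\ph(x)}$ for all $(x,n)$, because the $n$-balls over a separated set are essentially disjoint and their $\mu$-masses total at most one while their weights are comparable to $e^{-nP(\ph)+S_n\ph}$; and a lower bound $\mu(B_n(x,\delta)) \ge K_2 e^{-nP(\ph)+S_n\ph(x)}$ whenever $(x,n)\in\GGG$, since such a segment can be inserted into the separated sets defining $\mu_{n'}$ for every $n'\ge n$ and, by specification and the Bowen property, contributes a definite proportion of the total mass.

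For uniqueness, let $\nu$ be any ergodic equilibrium state. The measure-theoretic content of $P(\PPP\cup\SSS,\ph)<P(\ph)$ is that a measure concentrated on points whose decompositions keep $p+s$ a fixed fraction of $n$ for all large $n$ has free energy strictly below $P(\ph)$; hence for $\nu$-a.e.\ $x$ there is a density-one set of $n$ admitting a decomposition with $g(x,n)\ge(1-\eps)n$ and $p(x,n)\le\eps n$, i.e.\ $(f^{p(x,n)}x, g(x,n))\in\GGG$ with a near-full good core. Applying the lower Gibbs bound on these good cores together with a Vitali-type covering argument along such $n$ gives $\nu\ll\mu$ with bounded density, and ergodicity of both measures forces $\nu=\mu$; since every ergodic equilibrium state equals the fixed ergodic measure $\mu$ and the equilibrium states form a simplex with ergodic extreme points, $\mu$ is the unique equilibrium state. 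I expect the main obstacle to be precisely this last step: converting the pressure gap into the statement that near-full good cores occur with density one under an arbitrary ergodic equilibrium state requires a careful entropy/pressure partition argument (ruling out that a positive-entropy part of $\nu$ is carried by the ``mostly bad'' segments), and the covering argument upgrading the pointwise Gibbs comparison to $\nu\ll\mu$ must be run at the fixed scale $\delta$ while tracking where in the orbit the good core sits. The initial passage from scale-$\delta$ estimates to genuine pressure, where $P_{\exp}^\perp(\ph)<P(\ph)$ enters, is the other delicate point.
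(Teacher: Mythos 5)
The paper does not prove this theorem itself; it quotes it from \cite{CT}, and your proposal is essentially a reconstruction of the Climenhaga--Thompson proof, which is Bowen's classical three-step argument (partition-sum estimates from specification and the Bowen property, construction of a weak-$*$ limit measure with two-sided non-uniform Gibbs bounds, and a density/absolute-continuity comparison against an arbitrary ergodic equilibrium state) run with the orbit decomposition in place of global hypotheses. One point to flag: you invoke ``expansivity at scale $\delta$'' to make the gluing map finite-to-one, but the hypothesis is only the pressure gap $P_{\exp}^\perp(\ph)<P(\ph)$, which is strictly weaker; Climenhaga--Thompson handle this by working at a scale where the non-expansive set has pressure deficit and showing this suffices for the counting and separation estimates, so that step needs the same care you already flag for passing from scale-$\delta$ pressure to genuine pressure.
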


We will first address expansivity concerns, then show that the decomposition lifts to the product, and finally, examine what considerations can be made to attain the required pressure estimates in the product system.

\subsection{Expansivity}

One of the fundamental difficulties in the flow setting boils down to the fact that the Cartesian product of an expansive flow with itself is not expansive. This is no longer an issue in the discrete-time setting. To see why, consider the definition of a non-expansive point.

\begin{defn}
Given $f : X\to X$ and $\eps > 0$, the non-expansive set at scale $\eps$ is
$$\NE(\eps;f) := \{x\mid \Gamma_\eps(x)\neq \{x\}\}$$
where $\Gamma_\eps(x) := \{y\mid d(f^nx,f^ny) \leq \eps \text{ for all } n\in \ZZ\}$ is the set of all points which $\eps$-shadow $x$ for all time.
\end{defn}

Unlike in the flow case, where $\NE(\eps;\FFF\x\FFF) = X\x X$, we are able to express the non-expansive set in the product space in terms of the non-expansive set in the base.
\begin{prop}
Given $f : X\to X$ and $\eps > 0$,
$$\NE(\eps;f\x f) = \left(X\x \NE(\eps;f)\right) \cup \left(\NE(\eps;f)\x X\right).$$
\end{prop}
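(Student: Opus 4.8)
The plan is to prove the two set inclusions separately, working directly from the definition of $\Gamma_\eps$ in the product system. The key observation is that the metric on $X \times X$ induces shadowing coordinatewise: if we write $d^\times$ for (say) the max metric on $X\times X$, then $(y_1,y_2) \in \Gamma_\eps((x_1,x_2);f\times f)$ if and only if $d((f\times f)^n(x_1,x_2),(f\times f)^n(y_1,y_2)) = \max\{d(f^nx_1,f^ny_1), d(f^nx_2,f^ny_2)\} \le \eps$ for all $n\in\ZZ$, which happens precisely when $y_1 \in \Gamma_\eps(x_1;f)$ \emph{and} $y_2 \in \Gamma_\eps(x_2;f)$. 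In other words, $\Gamma_\eps((x_1,x_2);f\times f) = \Gamma_\eps(x_1;f)\times\Gamma_\eps(x_2;f)$. This identity is the engine of the whole argument, and once it is in hand both inclusions are essentially bookkeeping.

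For the inclusion $\supseteq$: suppose $(x_1,x_2) \in \NE(\eps;f)\times X$, so $\Gamma_\eps(x_1;f)\neq\{x_1\}$; pick $y_1 \in \Gamma_\eps(x_1;f)$ with $y_1 \neq x_1$. Then $(y_1,x_2) \in \Gamma_\eps(x_1;f)\times\Gamma_\eps(x_2;f) = \Gamma_\eps((x_1,x_2);f\times f)$ (using $x_2 \in \Gamma_\eps(x_2;f)$ always), and $(y_1,x_2)\neq(x_1,x_2)$, so $(x_1,x_2)\in\NE(\eps;f\times f)$. The case $(x_1,x_2)\in X\times\NE(\eps;f)$ is symmetric. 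For the inclusion $\subseteq$: suppose $(x_1,x_2)\in\NE(\eps;f\times f)$, so there exists $(y_1,y_2)\in\Gamma_\eps((x_1,x_2);f\times f)$ with $(y_1,y_2)\neq(x_1,x_2)$. By the product identity, $y_1\in\Gamma_\eps(x_1;f)$ and $y_2\in\Gamma_\eps(x_2;f)$. Since $(y_1,y_2)\neq(x_1,x_2)$, at least one coordinate differs: if $y_1\neq x_1$ then $\Gamma_\eps(x_1;f)\neq\{x_1\}$, so $x_1\in\NE(\eps;f)$ and hence $(x_1,x_2)\in\NE(\eps;f)\times X$; if $y_2\neq x_2$ then symmetrically $(x_1,x_2)\in X\times\NE(\eps;f)$. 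Either way $(x_1,x_2)$ lies in the right-hand union.

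I do not anticipate a genuine obstacle here; the only point requiring a modicum of care is making sure the chosen product metric on $X\times X$ is one for which the coordinatewise characterization of shadowing holds exactly (the max metric, or any bi-Lipschitz-equivalent choice up to adjusting $\eps$), and noting that the statement is insensitive to this choice in the limit $\eps\downarrow 0$ that ultimately appears in $P_{\exp}^\perp$. If the paper has fixed a specific metric on products elsewhere, I would simply invoke that; otherwise I would state the max-metric convention at the start of the subsection. The contrast with the flow case — where $\Gamma_\eps((x,y);\FFF\times\FFF)$ always contains the full two-parameter orbit $f_{[-s,s]}(x)\times f_{[-s,s]}(y)$, forcing $\NE(\eps;\FFF\times\FFF)=X\times X$ — is worth a one-sentence remark, since it explains why no analogue of $\NE^\times$ is needed in discrete time.
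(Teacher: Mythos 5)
Your proposal is correct and follows exactly the paper's argument: the proof there also rests on the single identity $\Gamma_\eps((x,y);f\times f)=\Gamma_\eps(x;f)\times\Gamma_\eps(y;f)$, from which both inclusions follow immediately. Your version merely spells out the bookkeeping (and the max-metric convention) in more detail than the paper does.
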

\begin{proof}
The key to this proof is the observation that $\Gamma_\eps((x,y)) = \Gamma_\eps(x)\x \Gamma_\eps(y)$. Using this, we see that given $(x,y)\in \NE(\eps;f\x f)$, then
$$(x,y) \neq \Gamma_\eps((x,y)) = \Gamma_\eps(x)\x \Gamma_\eps(y).$$
Therefore, either $x$ or $y$ is in $\NE(\eps;f)$. Similarly, if $x\in \NE(\eps;f)$, then for all $y\in X$, $\Gamma_\eps((x,y)) \neq \{x,y\}$. This completes our proof.
\end{proof}

Using this, we can show that any equilibrium measure on $(X\x X,f\x f,\Phi)$ ``does not see'' the non-expansive set.

\begin{prop}\label{prop: discrete exp gap}
If $P_{\exp}^{\perp}(\ph) < P(\ph)$, then $P_{\exp}^{\perp}(\Phi) < P(\Phi)$.
\end{prop}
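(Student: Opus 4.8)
The plan is to mimic, in the product system, the chain of estimates that was just established in the flow setting: first bound $P_\nu(\Phi)$ for measures supported on the non-expansive set of $f \times f$ by combining the product structure of $\NE(\eps; f\times f)$ with Proposition~\ref{prop: projection of ES}, and then let $\eps \downarrow 0$. Concretely, fix $\eps > 0$ and let $\nu \in \MMM(X\x X, f\x f)$ with $\nu(\NE(\eps; f\x f)) = 1$. By the previous proposition, $\NE(\eps; f\x f) = (X\x \NE(\eps;f)) \cup (\NE(\eps;f)\x X)$, so at least one of $\nu(X\x\NE(\eps;f))$ or $\nu(\NE(\eps;f)\x X)$ is at least $1/2$; by symmetry assume the former is positive. (We do not even need ergodicity here, unlike in Proposition~\ref{prop: exp gap}, since the measure-zero-or-one dichotomy can be avoided by working with positive mass and splitting $\nu$ into its restrictions — see below.)

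The cleanest route is to first observe that the supremum defining $P_{\exp}^\perp(\Phi)$ can be computed over ergodic $\nu$, since $\NE(\eps;f\times f)$ is invariant and the ergodic decomposition of a measure supported on it has almost every component supported on it, with $P_\nu(\Phi) = \int P_{\nu^e}(\Phi)\,d\nu(\nu^e)$. So take $\nu$ ergodic with $\nu(\NE(\eps;f\times f))=1$; then exactly one of $X\x\NE(\eps;f)$, $\NE(\eps;f)\x X$ has full $\nu$-measure, say the first. Writing $\nu_i = \nu\circ\pi_i^{-1}$, we get $\nu_1(\NE(\eps;f)) = 1$, and since $\nu$ is a joining of $\nu_1$ and $\nu_2$, Proposition~\ref{prop: projection of ES} gives
\[
P_\nu(\Phi) \le P_{\nu_1}(\ph) + P_{\nu_2}(\ph) \le \sup_{\mu(\NE(\eps;f))=1} P_\mu(\ph) + P(\ph).
\]
Taking the supremum over such $\nu$ and then $\eps \downarrow 0$ yields $P_{\exp}^\perp(\Phi) \le P_{\exp}^\perp(\ph) + P(\ph)$. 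Since $P(\Phi) = 2P(\ph)$ by \cite[Theorem 9.8]{Wa} and $P_{\exp}^\perp(\ph) < P(\ph)$ by hypothesis, we conclude $P_{\exp}^\perp(\Phi) \le P_{\exp}^\perp(\ph) + P(\ph) < 2P(\ph) = P(\Phi)$.

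I expect no serious obstacle here — this is genuinely the discrete-time shadow of Proposition~\ref{prop: exp gap}, and in fact it is easier because the product non-expansive set is already exactly $(X\x\NE)\cup(\NE\x X)$ rather than something one has to massage via the product-expansivity notion. The only point requiring a little care is the reduction to ergodic measures in the definition of $P_{\exp}^\perp$: one must check that the obstruction pressure is unchanged when restricting to ergodic measures, which follows from affineness of $\mu \mapsto P_\mu(\ph)$ on measures with a fixed integral and the fact that the constraint $\mu(\NE(\eps))=1$ is preserved $\nu$-a.e. under ergodic decomposition because $\NE(\eps)$ is measurable and invariant. Alternatively, one can avoid ergodicity entirely by decomposing a general $\nu$ supported on $\NE(\eps;f\times f)$ as a convex combination $\nu = \nu(X\x\NE)\,\nu' + \nu(\NE\x X \setminus X\x\NE)\,\nu''$ of (sub)measures supported on the two pieces, applying the joining bound to each normalized piece, and using concavity of entropy; I would present whichever version is shortest in the final write-up.
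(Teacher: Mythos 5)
Your proposal is correct and follows the paper's proof: decompose $\NE(\eps;f\times f)$ as $(X\times\NE(\eps;f))\cup(\NE(\eps;f)\times X)$, pass to an ergodic $\nu$ so that one piece has full measure, apply the joining bound of Proposition~\ref{prop: projection of ES}, and send $\eps\downarrow 0$. Your care over the reduction to ergodic measures is warranted --- the paper's stated ``key observation'' (that for any $\nu$ with $\nu(\NE(\eps;f\times f))=1$ one has $(\pi_i)_*\nu(\NE(\eps;f))=1$ for some $i$) is false for non-ergodic $\nu$, and the repair is exactly the ergodic decomposition step you carry out, just as Proposition~\ref{prop: exp gap} explicitly assumes $\nu$ ergodic.
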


\begin{proof}
The proof follows that of Proposition \ref{prop: exp gap}, merely replacing the flow $\FFF$ with the map $f : X\to X$. The key observation is that if $\nu\in\MMM(X\x X, f\x f)$ is a measure such that $\nu(\NE(\eps;f\x f)) = 1$ for some $\eps > 0$, then for $i = 1$ or $2$, we have $(\pi_i)_*\nu(\NE(\eps;f)) = 1$. From here, the pressure inequality in the base implies the pressure inequality in the product.
\end{proof}

\subsection{$\lambda$-Decomposition}
We adapt the definition of $\lambda$-decompositions to the discrete time setting, carrying everything over as written. Let $\lambda : X\to [0,\infty)$ be lower semicontinuous and bounded. Then for all $\eta > 0$, define the set of bad orbit segments by
$$B(\eta) = \left\{(x,n)\mid \frac{1}{n}\sum_{i=0}^{n-1}\lambda(f^ix) < \eta\right\}$$
and the set of good orbit segments by
$$\GGG(\eta) = \left\{(x,n)\mid \frac{1}{k}\sum_{i=0}^{k-1}\lambda(f^ix) \geq \eta \text{ and } \frac{1}{k}\sum_{i=n-k}^{n-1}\lambda(f^ix)\geq \eta \text{ for } 1\leq k \leq n\right\}.$$
Then, the decomposition of an orbit segment $(x,n)$ is created by first taking the largest initial segment in $B(\eta)$, calling that the prefix, then taking the largest terminal segment in $B(\eta)$, and calling that the suffix. The remaining segment lies in $\GGG(\eta)$.

Just as in the flow case, $\lambda$-decompositions lift nicely to the product, by defining $\tilde{\lambda} : X\x X\to [0,\infty)$ by $\tilde{\lambda}(x,y) = \lambda(x)\lambda(y)$, and then studying the corresponding $\tilde{\lambda}$-decomposition.

\begin{prop}\label{prop: lift to products}
Suppose that $\GGG(\eta)$ is the set of good orbit segments for the $\lambda$-decomposition of $(X,f)$. Then the set of good orbit segments in the product space with respect to the function $\tilde{\lambda}$, written $\tGGG(\eta)$, satisfies
$$\tGGG(\eta)\subset \GGG\left(\frac{\eta}{\norm{\lambda}}\right)\x \GGG\left(\frac{\eta}{\norm{\lambda}}\right)$$
for all $\eta \geq 0$.
\end{prop}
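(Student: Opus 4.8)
The plan is to unwind the definitions of $\tGGG(\eta)$ and $\GGG(\eta)$ and exploit the pointwise bound $\lambda \le \norm{\lambda}$ on each coordinate. We may assume $\norm{\lambda} > 0$, since otherwise $\lambda \equiv 0$ gives rise to no nontrivial decomposition, and the case $\eta = 0$ is immediate because $\GGG(0)$ is the whole space of orbit segments. So fix $\eta > 0$ and take $((x,y),n) \in \tGGG(\eta)$.

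First I would write out what membership in $\tGGG(\eta)$ means: for every $1 \le k \le n$,
\[
\frac{1}{k}\sum_{i=0}^{k-1}\tilde{\lambda}(f^i x, f^i y) \ge \eta \quad\text{and}\quad \frac{1}{k}\sum_{i=n-k}^{n-1}\tilde{\lambda}(f^i x, f^i y) \ge \eta,
\]
and then substitute $\tilde{\lambda}(f^i x, f^i y) = \lambda(f^i x)\lambda(f^i y)$. Next, using $\lambda(f^i y) \le \norm{\lambda}$ for every $i$, I would pull this constant out of each sum to get, for all $1 \le k \le n$,
\[
\eta \le \frac{1}{k}\sum_{i=0}^{k-1}\lambda(f^i x)\lambda(f^i y) \le \norm{\lambda}\cdot \frac{1}{k}\sum_{i=0}^{k-1}\lambda(f^i x),
\]
hence $\frac{1}{k}\sum_{i=0}^{k-1}\lambda(f^i x) \ge \eta/\norm{\lambda}$, and the identical computation with the terminal window $\{n-k,\dots,n-1\}$ gives the analogous bound. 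This is exactly the statement $(x,n) \in \GGG(\eta/\norm{\lambda})$; repeating the argument with the roles of $x$ and $y$ interchanged (bounding $\lambda(f^i x) \le \norm{\lambda}$ instead) gives $(y,n) \in \GGG(\eta/\norm{\lambda})$. Therefore $((x,y),n) \in \GGG(\eta/\norm{\lambda}) \times \GGG(\eta/\norm{\lambda})$, as claimed.

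There is no real obstacle here: this is a direct estimate, and the only point worth checking is that the averaging windows agree between the base and product decompositions (both range over $1 \le k \le n$ in the discrete-time setting), so no off-by-one subtlety arises. The value of the statement is that it lets the product $\tilde{\lambda}$-decomposition inherit the specification and Bowen properties of $\GGG(\eta)$, applied at the rescaled scale $\eta/\norm{\lambda}$, which is precisely what is needed to feed the product system into Theorem \ref{thm: CT discrete}.
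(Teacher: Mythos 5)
Your proof is correct and follows the same route as the paper: bound $\lambda$ on one coordinate by $\norm{\lambda}$, pull the constant out of the Birkhoff average, and conclude each coordinate's orbit segment lies in $\GGG(\eta/\norm{\lambda})$. The only cosmetic differences are that you handle the degenerate cases $\eta = 0$ and $\lambda \equiv 0$ explicitly and you use the indexing $i = 0, \dots, k-1$ consistent with the definition (the paper's displayed sums start at $i=1$, which is a minor slip).
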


\begin{proof}
Let $((x,y),n)\in\tGGG(\eta)$. Then for all $1\leq k\leq n$, we see
$$\frac{1}{k}\sum_{i=1}^{k}\lambda(f^ix) \geq \frac{1}{k}\sum_{i=1}^{k}\lambda(f^ix)\frac{\lambda(f^iy)}{\norm{\lambda}} = \frac{1}{\norm{\lambda}k}\sum_{k=1}^{k}\tilde{\lambda}(f^ix,f^iy) \geq \frac{\eta}{\norm{\lambda}}.$$
A similar computation holds for the average along terminal subsegments, as well as in the other coordinate, and so our proof is complete.
\end{proof}

\begin{rem}
We should not expect to have equality here, as given $((x,y),n)\in \GGG(\eta)\x\GGG(\eta)$, there is no reason to expect both coordinates to experience ``good'' behavior with respect to $\lambda$ at the same time, and $\tilde{\lambda}$ is constructed to always identify ``bad'' behavior in one coordinate as bad for the entire segment.
\end{rem}

It is classical that specification and the Bowen property lift to products. Hence, from Proposition \ref{prop: lift to products}, we have the following corollaries.

\begin{cor}
If $\GGG(\eta)$ has specification at scale $\delta$, then $\tGGG(\eta)$ has specification at scale $\delta$ as well.
\end{cor}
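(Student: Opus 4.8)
\emph{Plan of proof.} The strategy is to reduce the product statement to the base statement via the containment established in Proposition~\ref{prop: lift to products}, together with the elementary fact that specification is inherited by Cartesian products with the \emph{same} gap constant. Fix $\eta > 0$ and set $\eta' := \eta/\norm{\lambda}$, which is positive since $\lambda$ is bounded. By hypothesis $\GGG(\eta')$ has specification at scale $\delta$, so there is a gap length $\tau = \tau(\delta) > 0$ --- depending only on $\delta$ and on $\GGG(\eta')$, not on any particular finite collection --- witnessing it.

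First I would take an arbitrary finite collection $\{((x_i,y_i),n_i)\}_{i=1}^k \subset \tGGG(\eta)$ and apply Proposition~\ref{prop: lift to products} to get $(x_i,n_i)\in\GGG(\eta')$ and $(y_i,n_i)\in\GGG(\eta')$ for each $i$. Applying the specification property of $\GGG(\eta')$ to the collection $\{(x_i,n_i)\}_{i=1}^k$ produces $u\in X$ that $\delta$-shadows the successive segments with gaps $\tau$; applying it again to $\{(y_i,n_i)\}_{i=1}^k$ produces $v\in X$ doing the same. Because both applications are at the same scale $\delta$ and use collections with the same list of lengths $n_i$, the gap length is the common value $\tau$, so writing $m_i := \sum_{j=1}^{i-1}(\tau + n_j)$, the orbit of $u$ shadows $(x_i,n_i)$ starting at time $m_i$ and simultaneously the orbit of $v$ shadows $(y_i,n_i)$ starting at time $m_i$.

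Finally I would verify that $(u,v)$ $\delta$-shadows the original collection with gaps $\tau$, using the convention that $X\x X$ carries the maximum metric $d_\x((a,b),(c,d)) = \max\{d(a,c),d(b,d)\}$: for each $i$ and each $0\le s\le n_i$,
$$d_\x\bigl((f^s\x f^s)(x_i,y_i),\,(f\x f)^{s+m_i}(u,v)\bigr) = \max\bigl\{d(f^sx_i, f^{s+m_i}u),\, d(f^sy_i, f^{s+m_i}v)\bigr\} \le \delta.$$
Hence $\tGGG(\eta)$ has specification at scale $\delta$ with gap constant $\tau$, and since $\eta>0$ was arbitrary, this holds for every $\eta$. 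The only point needing care --- and the closest thing to an obstacle, although it is genuinely minor --- is the observation that one and the same $\tau$ serves both coordinates, keeping the two shadowing orbits in lockstep; this is immediate from the form of the definition of specification, where $\tau$ is allowed to depend only on $\delta$ and the collection $\CCC$. One should also not forget the maximum-metric convention on the product, without which $\delta$-shadowing in each factor would only give $2\delta$-shadowing in $X\x X$.
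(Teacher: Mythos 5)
Your argument is correct and is exactly the paper's (unwritten) proof: the paper simply invokes the classical fact that specification passes to Cartesian products together with the containment $\tGGG(\eta)\subset\GGG(\eta/\norm{\lambda})\x\GGG(\eta/\norm{\lambda})$ from Proposition~\ref{prop: lift to products}, which is precisely the reduction you carry out in detail. Your two points of care (the single uniform $\tau$ keeping the two shadowing orbits in lockstep, and the max-metric convention on $X\x X$) are the right ones, and your reading of the hypothesis as ``$\GGG(\eta)$ has specification at scale $\delta$ for all $\eta>0$'' (so that it applies at $\eta/\norm{\lambda}$) matches the context of the theorems in which the corollary is used.
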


\begin{cor}
If $\ph$ has the Bowen property on $\GGG(\eta)$, then $\Phi$ has the Bowen property on $\tGGG(\eta)$.
\end{cor}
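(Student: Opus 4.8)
The statement to prove is the final corollary: if $\varphi$ has the Bowen property on $\GGG(\eta)$, then $\Phi$ has the Bowen property on $\tGGG(\eta)$. The plan is to reduce this to the classical fact that the Bowen property for a potential on a collection of orbit segments passes to the product potential on the product collection, combined with the containment established just above in Proposition~\ref{prop: lift to products}. Concretely, I would first recall the (elementary) monotonicity of the Bowen property under inclusion of collections: if $\CCC \subset \CCC'$ and $\psi$ has the Bowen property on $\CCC'$ at scale $\eps$, then $\psi$ has the Bowen property on $\CCC$ at scale $\eps$, since the defining supremum only decreases when taken over a smaller set. Second, I would invoke the classical product statement: if $\varphi$ has the Bowen property on $\GGG(\eta/\norm{\lambda})$ at scale $\eps$, then $\Phi(x,y) = \varphi(x) + \varphi(y)$ has the Bowen property on $\GGG(\eta/\norm{\lambda}) \x \GGG(\eta/\norm{\lambda})$ at scale $\eps$ (using the product metric $d((x_1,x_2),(y_1,y_2)) = \max\{d(x_1,y_1), d(x_2,y_2)\}$, so that an $\eps$-shadowing orbit in the product shadows in each coordinate, and the Bowen sums add).

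Putting these together: by hypothesis $\varphi$ has the Bowen property on $\GGG(\eta')$ for every $\eta' > 0$, in particular for $\eta' = \eta/\norm{\lambda}$. The product fact gives the Bowen property for $\Phi$ on $\GGG(\eta/\norm{\lambda}) \x \GGG(\eta/\norm{\lambda})$. By Proposition~\ref{prop: lift to products}, $\tGGG(\eta) \subset \GGG(\eta/\norm{\lambda}) \x \GGG(\eta/\norm{\lambda})$, so the monotonicity step yields the Bowen property for $\Phi$ on $\tGGG(\eta)$ at the same scale, which is exactly the claim. One minor point to address carefully is the quantifier on scales: the Bowen property as defined in the paper is stated "at scale $\eps$", so I would make sure the chain of implications preserves a fixed $\eps$ (it does — none of the three steps changes the shadowing scale), and if one wants the unquantified version ("has the Bowen property") one simply notes it holds at some, hence the relevant, scale.

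**Main obstacle.** There is no serious obstacle here; this is a bookkeeping corollary of Proposition~\ref{prop: lift to products} and a standard lemma. The only thing requiring a moment's care is confirming that the product metric convention makes the coordinatewise shadowing work out — i.e., that $d_{\mathrm{prod}}((x,y),(x',y')) \le \eps$ for all relevant times forces both $d(x,\cdot) \le \eps$ and $d(y,\cdot) \le \eps$ — and then that $|\int_0^n \Phi(f^i(x,y)) - \Phi(f^i(x',y'))| \le |\int \varphi(f^i x) - \varphi(f^i x')| + |\int \varphi(f^i y) - \varphi(f^i y')|$ is bounded by twice the Bowen constant for $\varphi$ on $\GGG(\eta/\norm{\lambda})$. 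Both are immediate from the triangle inequality, so the proof is short; I would likely just state "this is classical" and point to the inclusion, exactly as the paper's preceding sentence anticipates.
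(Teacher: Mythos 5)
Your argument is correct and matches the paper's approach: the paper simply invokes the classical fact that the Bowen property lifts to products together with the inclusion $\tGGG(\eta) \subset \GGG(\eta/\norm{\lambda})\x\GGG(\eta/\norm{\lambda})$ from the preceding proposition, leaving implicit the monotonicity-under-inclusion and additivity-of-Bowen-sums bookkeeping that you spell out. Your observation that the hypothesis should be read as holding for all $\eta'>0$ (so that $\eta'=\eta/\norm{\lambda}$ is covered) is the right reading in context, and in the paper's applications $\lambda$ is an indicator function so $\norm{\lambda}=1$ and the distinction is moot.
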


Beyond lifting to products, much of the strength of $\lambda$-decompositions lies in the fact that there is an easy way to compute the pressure of the collection of bad orbit segments, by sending $\eta$ to $0$.

\begin{prop}\label{prop: discrete pressure decrease}
If the entropy map is upper semicontinuous, then $$\lim\limits_{\eta\downarrow 0}P(B(\eta),\ph) \leq \sup\left\{P_\mu(\ph)\mid \int\lambda\,d\mu = 0\right\} = P(B_\infty,\ph),$$
where $B_\infty = \bigcap_{n\in\ZZ}f^n\lambda^{-1}(0).$
\end{prop}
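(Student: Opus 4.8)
The plan is to prove the two relations separately, beginning with the equality on the right, which needs no new ideas beyond Proposition~\ref{prop: pressure gap}. Since $\lambda\geq 0$ is lower semicontinuous, $\lambda^{-1}(0)$ is compact, so $B_\infty=\bigcap_{n\in\ZZ}f^n\lambda^{-1}(0)$ is a countable intersection of compact sets, hence compact, and it is $f$-invariant by construction. A measure $\mu\in\MMM(X,f)$ satisfies $\int\lambda\,d\mu=0$ if and only if $\mu(\lambda^{-1}(0))=1$, and by invariance this happens if and only if $\mu(B_\infty)=1$, i.e.\ $\mu\in\MMM(B_\infty,f)$. Applying the Variational Principle to the compact invariant subsystem $(B_\infty,f)$ then gives $P(B_\infty,\ph)=\sup\{P_\mu(\ph):\mu\in\MMM(B_\infty,f)\}=\sup\{P_\mu(\ph):\int\lambda\,d\mu=0\}$. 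This is genuinely simpler than the flow version, where one first passes to rational times.

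For the inequality $\lim_{\eta\downarrow 0}P(B(\eta),\ph)\leq\sup\{P_\nu(\ph):\int\lambda\,d\nu=0\}$ I would mirror the proof of Proposition~\ref{prop: pressure decrease} (that is, \cite[Theorem 3.6]{CaT}) in discrete time. First note that $\eta\mapsto P(B(\eta),\ph)$ is nondecreasing, since $B(\eta)\subseteq B(\eta')$ when $\eta\leq\eta'$, so the limit as $\eta\downarrow 0$ exists (as an infimum). Fix $\eta>0$. For each small $\eps>0$, choose along a subsequence of times $n\to\infty$ maximal $(n,\eps)$-separated sets $E_n\subseteq B(\eta)_n$ whose exponential sums $\Lambda_n(B(\eta),\ph,\eps)$ realize $P(B(\eta),\ph,\eps)$ in the $\limsup$; form the weighted empirical measures $\nu_n=\Lambda_n(B(\eta),\ph,\eps)^{-1}\sum_{x\in E_n}e^{\sum_{i=0}^{n-1}\ph(f^ix)}\delta_x$ and their Birkhoff averages $\mu_n=\frac{1}{n}\sum_{i=0}^{n-1}f^i_*\nu_n$, and pass to a weak$^*$ limit. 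The standard Misiurewicz argument produces an $f$-invariant limit measure whose pressure is at least $P(B(\eta),\ph,\eps)$ up to a term vanishing with $\eps$; letting $\eps\downarrow 0$, invoking upper semicontinuity of the entropy map, and diagonalizing yields an invariant measure $\mu_\eta$ with $P_{\mu_\eta}(\ph)\geq P(B(\eta),\ph)$. Crucially, each $x\in E_n\subseteq B(\eta)_n$ satisfies $\frac{1}{n}\sum_{i=0}^{n-1}\lambda(f^ix)<\eta$, so $\int\lambda\,d\mu_n=\int\frac{1}{n}\sum_{i=0}^{n-1}\lambda\circ f^i\,d\nu_n<\eta$; since $\lambda$ is lower semicontinuous, $\mu\mapsto\int\lambda\,d\mu$ is weak$^*$ lower semicontinuous, giving $\int\lambda\,d\mu_\eta\leq\eta$.

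Finally I would let $\eta\downarrow 0$ and take a weak$^*$ limit $\mu$ of $(\mu_\eta)$. Upper semicontinuity of $\mu\mapsto h_\mu(f)$ together with continuity of $\ph$ gives $P_\mu(\ph)\geq\limsup_{\eta\downarrow 0}P_{\mu_\eta}(\ph)\geq\lim_{\eta\downarrow 0}P(B(\eta),\ph)$, while lower semicontinuity of $\mu\mapsto\int\lambda\,d\mu$ gives $\int\lambda\,d\mu\leq\liminf_{\eta\downarrow 0}\int\lambda\,d\mu_\eta\leq\liminf_{\eta\downarrow 0}\eta=0$, hence $\int\lambda\,d\mu=0$ since $\lambda\geq 0$. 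Therefore $\lim_{\eta\downarrow 0}P(B(\eta),\ph)\leq P_\mu(\ph)\leq\sup\{P_\nu(\ph):\int\lambda\,d\nu=0\}$, which together with the equality above completes the argument.

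The only genuinely delicate point I expect is the entropy bookkeeping in the construction of $\mu_\eta$: one must control the $\eps$-dependence of the separated-set entropy and rule out loss of mass, exactly as in the Misiurewicz proof of the variational principle, but now with the partition sums restricted to the collection $B(\eta)$ rather than to all of $X$. Upper semicontinuity of the entropy map is used precisely here (and again when passing $\eta\to 0$), and lower semicontinuity of $\lambda$ is used both to make $\lambda^{-1}(0)$ compact and to make $\int\lambda$ weak$^*$ lower semicontinuous. Since all of these ingredients are already in place in \cite{CaT} for flows, the discrete-time argument is a simplification rather than a new one, so in a final writeup I would either reproduce this construction briefly or simply cite the flow proof with the obvious modifications.
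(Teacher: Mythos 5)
Your proposal is correct and takes essentially the same route as the paper: the paper's proof simply cites the flow-case argument of \cite[Theorem 3.6]{CaT} (the Misiurewicz-type construction you reconstruct, with upper semicontinuity of the entropy map used when passing $\eps\downarrow 0$ and $\eta\downarrow 0$, and lower semicontinuity of $\lambda$ controlling $\int\lambda\,d\mu_\eta$) for the inequality, and Proposition~\ref{prop: pressure gap} for the equality. Your discrete-time rendering of both steps, including the compactness and invariance of $B_\infty$ and the application of the Variational Principle, is exactly the intended argument.
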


\begin{proof}
The first inequality follows the exact proof of \cite[Theorem 3.6]{CaT}, and the second equality is shown in Proposition \ref{prop: pressure gap}.
\end{proof}

Combining all these results with the discrete-time version of Proposition \ref{prop: product gap sufficient}, we obtain the discrete-time analogue of Theorem \ref{thm: main thm}, which implies Theorem \ref{Theorem B}.

\begin{thm}
Let $X$ be a compact metric space, $f : X\to X$ a homeomorphism, and $\ph : X\to\R$ a continuous potential. Then if $P_{\exp}^{\perp}(\ph) < P(\ph)$ and $\lambda$ yields a $\lambda$-decomposition satisfying the following properties:
\begin{enumerate}
	\item $\GGG(\eta)$ has specification at all scales for all sufficiently small $\eta > 0$;
	\item $\ph$ has the Bowen property on $\GGG(\eta)$ for all sufficiently small $\eta > 0$;
\end{enumerate}
and the product system has the following properties:
\begin{enumerate}
	\item $P(\bigcap_{n\in\ZZ} f^n\lambda^{-1}(0)\x X \cup X\x f^n\lambda^{-1}(0)) < 2P(\ph)$;
	\item The entropy map is upper semicontinuous
\end{enumerate}
then the unique equilibrium state for $(X,f,\ph)$ has the $K$-property.
\end{thm}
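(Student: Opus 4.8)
The plan is to assemble this final theorem from the pieces already developed for the flow case, noting that the discrete-time setting is strictly easier because the expansivity pathologies disappear. First I would invoke Theorem \ref{thm: CT discrete} applied to the \emph{product} system $(X\x X, f\x f, \Phi)$ together with the lifted $\tilde\lambda$-decomposition: by Proposition \ref{prop: discrete exp gap}, the hypothesis $P_{\exp}^\perp(\ph) < P(\ph)$ gives $P_{\exp}^\perp(\Phi) < P(\Phi)$; by Proposition \ref{prop: lift to products} and the two corollaries that follow it, $\tGGG(\eta)$ inherits specification at all scales and $\Phi$ has the Bowen property on $\tGGG(\eta)$ for all small $\eta$; and by Proposition \ref{prop: discrete pressure decrease} applied to $\tilde\lambda$, one has $\lim_{\eta\downarrow 0}P(\tilde B(\eta),\Phi) \le P(\tilde B_\infty,\Phi)$, where $\tilde B_\infty = \bigcap_{n\in\ZZ}(f^n\x f^n)\tilde\lambda^{-1}(0) = \bigcap_{n\in\ZZ}\bigl(f^n\lambda^{-1}(0)\x X \cup X\x f^n\lambda^{-1}(0)\bigr)$. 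Hypothesis (1) on the product is precisely $P(\tilde B_\infty,\Phi) < 2P(\ph) = P(\Phi)$, so for all sufficiently small $\eta$ the prefix-suffix collection $\tilde B(\eta)$ (which is $\tilde\PPP \cup \tilde\SSS$ for the $\tilde\lambda$-decomposition) satisfies $P(\tilde\PPP\cup\tilde\SSS,\Phi) < P(\Phi)$. Thus Theorem \ref{thm: CT discrete} yields a unique equilibrium state $\nu$ for $(X\x X, f\x f, \Phi)$.

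Second, I would deduce from this the existence of a unique equilibrium state for $\Phi$ in the strong sense needed, and then feed it into Theorem \ref{NewLedr}. Since $(X,f)$ need not be expansive, I rely on the fact that Theorem \ref{NewLedr} has no expansivity hypothesis; it requires only that $\Phi$ have a unique equilibrium state. Applying Theorem \ref{NewLedr} with this $f$, $\ph$, and $\Phi$ gives immediately that $(X,f,\ph)$ has a unique equilibrium state with the $K$-property. Note that the entropy-expansivity-type condition which appeared in Theorem \ref{general} for the flow case is not needed here at all, exactly as remarked after the proof of Theorem \ref{NewLedr}; upper semicontinuity of the entropy map on the product, hypothesis (2), is used only to guarantee that the pressure estimate of Proposition \ref{prop: discrete pressure decrease} is available (it is the standing hypothesis of that proposition) and that equilibrium states exist.

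There is one bookkeeping point worth isolating as the main obstacle: one must be sure that the lifted decomposition $(\tilde\PPP,\tGGG(\eta),\tilde\SSS)$ really is a legitimate decomposition of $X\x X \x \N$ in the sense of \cite{CT}, with the prefix/suffix collection equal to $\tilde B(\eta)$ up to the orbit-segment reindexing that Climenhaga–Thompson allow, and that the inclusion of Proposition \ref{prop: lift to products} is enough to transfer specification and the Bowen property — this is the content of the two corollaries following that proposition and is classical, so no real difficulty arises, but it is where the argument must be stated carefully rather than hand-waved. The only genuinely substantive inputs, $P(\tilde B_\infty,\Phi) < 2P(\ph)$ and Theorem \ref{NewLedr}, are given by hypothesis and by the earlier section respectively, so the proof is essentially an assembly. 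I would close by remarking, as the paper does for the flow case, that one-sided $\lambda$-decompositions are handled identically, since the argument never used that both $\tilde\PPP$ and $\tilde\SSS$ are nonempty.
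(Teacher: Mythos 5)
Your proof is correct and follows the same route as the paper: lift the $\lambda$-decomposition to the product system, verify that the hypotheses of the discrete-time Climenhaga--Thompson theorem hold there (expansivity gap via Proposition~\ref{prop: discrete exp gap}, specification and the Bowen property via Proposition~\ref{prop: lift to products} and its corollaries, and the pressure gap via Proposition~\ref{prop: discrete pressure decrease} together with hypothesis (1)), obtain a unique equilibrium state for $\Phi$ on $(X\times X, f\times f)$, and then conclude by Theorem~\ref{NewLedr}. Your version is if anything slightly more careful than the paper's, which cites the flow version of the pressure decrease result where the discrete one (Proposition~\ref{prop: discrete pressure decrease}) is what is actually used.
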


\begin{proof}
Because the entropy map on the product is upper semicontinuous, by Proposition \ref{prop: pressure decrease}, for all sufficiently small $\eta > 0$, $P(B(\eta),\Phi) < P(\Phi)$. Then, $\tGGG(\eta)$ has specification at all scales for all small $\eta > 0$, and $\Phi$ has the Bowen property on $\tGGG(\eta)$ as well. Finally, $P_{\exp}^{\perp}(\Phi) < P(\Phi)$ by Proposition \ref{prop: discrete exp gap}, and so we can apply Theorem \ref{thm: CT discrete} to establish that $(X\x X,f\x f,\Phi)$ has a unique equilibrium state. Therefore, by Proposition \ref{NewLedr}, we have established the $K$-property in the base.
\end{proof}

As in the flow case, we can provide some conditions for pressure on the base system that imply the necessary pressure gap in the product. In particular, Proposition \ref{prop: open pressure} holds to show that the pressure gap is an open condition. Additionally, Theorems \ref{thm: measure estimate} and \ref{prop: entropy production} hold in the discrete-time setting, with the proof simplifying in the case of Theorem \ref{prop: entropy production}. For brevity, we do not restate them here, but simply refer the reader to $\S$ \ref{Pressure}.

\subsection{One-Sided $\lambda$-Decompositions}
In some situations, the most useful decomposition for a system is not a $\lambda$-decomposition, but rather what we define in this paper as a \emph{one-sided $\lambda$-decomposition}. These differ from $\lambda$-decompositions in that either the set of prefixes or the set of suffixes is empty, with a corresponding change in the definition of what a good orbit segment is. The proofs of the various results for these decompositions differ from those for $\lambda$-decompositions in a technical nature only, and in spirit, are exactly the same. Consequently, we use the same notation for $\GGG(\eta)$, and use context to differentiate the setting.

\begin{defn}
Let $\lambda : X\to [0,\infty)$ be a bounded, lower semicontinuous function. For all $\eta\in [0,1]$, let $B(\eta)$ as in the definition of a $\lambda$-decomposition. Define
$$\GGG(\eta) := \left\{(x,n)\mid \frac{1}{k}\sum_{i=0}^{k-1}\lambda(f^ix) \geq \eta \text{ for all } 1 \leq k \leq n\right\}$$
to be the collection of good orbit segments. We define the one-sided $\lambda$-decomposition (with prefixes) as follows: given $(x,n)$, take the largest $k \leq n$ such that $(x,k)\in B(\eta)$. Then $(x,k)\in\PPP$, and $(f^kx,n - k)\in \GGG(\eta)$.
\end{defn}

\begin{rem}
A one-sided $\lambda$-decomposition with suffixes is obtained in the following manner. Given $(x,n)$, take the largest $k\leq n$ so that $(f^{n-k}x, k)\in B(\eta)$ to be the suffix, and then $(x,n-k)$ will be in a ``reversed'' $\GGG(\eta)$ from above, where every terminal subsegment has average at least $\eta$. In this section, we will work in the setting with prefixes, as all proofs are analogous.
\end{rem}

Observe that one-sided $\lambda$-decompositions enjoy the same nice properties as $\lambda$. In particular, they lift naturally to products.

\begin{prop}
Let $\GGG(\eta)$ be the set of good orbit segments for a one-sided $\lambda$-decomposition and let $\tGGG(\eta)$ be the set of good orbit segments for a one-sided $\tilde{\lambda}$-decomposition with $\tilde{\lambda}(x,y) = \lambda(x)\lambda(y)$. Then
$$\tGGG(\eta) \subset\GGG\left(\frac{\eta}{\norm{\lambda}}\right)\x \GGG\left(\frac{\eta}{\norm{\lambda}}\right).$$
\end{prop}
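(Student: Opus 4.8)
The plan is to mimic the proof of Proposition \ref{prop: lift to products}, which becomes even lighter here since a good segment in a one-sided $\lambda$-decomposition only constrains initial subsegments, not terminal ones. First I would fix an orbit segment $((x,y),n)\in\tGGG(\eta)$. By the definition of the one-sided $\tilde{\lambda}$-decomposition with $\tilde{\lambda}(x,y)=\lambda(x)\lambda(y)$, this says precisely that $\frac{1}{k}\sum_{i=0}^{k-1}\tilde{\lambda}(f^ix,f^iy)\geq\eta$ for every $1\leq k\leq n$.

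Next I would apply the crude bound $\tilde{\lambda}(f^ix,f^iy)=\lambda(f^ix)\lambda(f^iy)\leq\norm{\lambda}\,\lambda(f^ix)$, valid because $\lambda\geq 0$ is bounded. Summing over $0\leq i\leq k-1$ and dividing by $k$ gives $\frac{\norm{\lambda}}{k}\sum_{i=0}^{k-1}\lambda(f^ix)\geq\frac{1}{k}\sum_{i=0}^{k-1}\tilde{\lambda}(f^ix,f^iy)\geq\eta$, so $\frac{1}{k}\sum_{i=0}^{k-1}\lambda(f^ix)\geq\eta/\norm{\lambda}$ for all $1\leq k\leq n$; this is exactly the membership condition $(x,n)\in\GGG(\eta/\norm{\lambda})$. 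Running the identical argument with the roles of $x$ and $y$ exchanged (using $\lambda(f^ix)\lambda(f^iy)\leq\norm{\lambda}\,\lambda(f^iy)$) yields $(y,n)\in\GGG(\eta/\norm{\lambda})$, whence $((x,y),n)\in\GGG(\eta/\norm{\lambda})\x\GGG(\eta/\norm{\lambda})$, which is the claimed containment.

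I do not expect any real obstacle: compared with the two-sided case there are simply no terminal-average conditions to check, so the bookkeeping is strictly shorter. The only point worth flagging is, as in the remark following Proposition \ref{prop: lift to products}, that equality should not be expected, since $\tilde{\lambda}$ flags a bad stretch in either coordinate as bad for the whole segment; and one could add that, exactly as in the two-sided setting, this inclusion is what transports specification at every scale and the Bowen property from $\GGG(\eta)$ to $\tGGG(\eta)$.
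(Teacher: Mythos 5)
Your proof is correct and follows essentially the same approach as the paper, which simply refers back to the computation in Proposition \ref{prop: lift to products} and drops the check on terminal subsegments. The only cosmetic difference is that you use the index range $i=0,\dots,k-1$ (matching the stated definitions), whereas the paper's earlier display has a minor indexing typo; the substance is identical.
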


\begin{proof}
The proof is the exact same as the written computation in Proposition \ref{prop: lift to products}.
\end{proof}

\begin{cor}
If $\GGG(\eta)$ has specification for all $\eta > 0$ and $\ph$ has the Bowen property on $\GGG(\eta)$ for all $\eta > 0$, then $\tGGG(\eta)$ has specification for all $\eta > 0$, and $\Phi(x,y) = \ph(x) + \ph(y)$ has the Bowen property on $\tGGG(\eta)$.
\end{cor}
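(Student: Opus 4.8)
The plan is to deduce both conclusions from the inclusion $\tGGG(\eta)\subset\GGG(\eta/\norm{\lambda})\x\GGG(\eta/\norm{\lambda})$, which the preceding proposition already establishes, together with the classical fact that specification and the Bowen property pass to Cartesian products. First I would record the product statements explicitly: if $\GGG(\eta)$ has specification at scale $\delta$ with gap constant $\tau$, then $\GGG(\eta)\x\GGG(\eta)$ has specification at scale $\delta$ with the same $\tau$ (shadow each coordinate by the corresponding base point, using the same gaps), and if $\ph$ has the Bowen property on $\GGG(\eta)$ with constant $K$, then $\Phi(x,y)=\ph(x)+\ph(y)$ has the Bowen property on $\GGG(\eta)\x\GGG(\eta)$ with constant $2K$, since $d((f^ix,f^iy),(f^ix',f^iy'))\le\eps$ forces $d(f^ix,f^ix')\le\eps$ and $d(f^iy,f^iy')\le\eps$, and the Birkhoff sum of $\Phi$ along $((x,y),n)$ splits as the sum of the Birkhoff sums of $\ph$ along $(x,n)$ and $(y,n)$.

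Next I would invoke monotonicity of these properties under passing to subcollections: any subcollection of a collection with specification at scale $\delta$ again has specification at scale $\delta$ (the shadowing point for the sub-collection is furnished by the larger collection), and the Bowen property for $\Phi$ on a collection restricts to any subcollection with the same constant. Since $\tGGG(\eta)\subset\GGG(\eta/\norm{\lambda})\x\GGG(\eta/\norm{\lambda})$, these two observations immediately give specification for $\tGGG(\eta)$ at every scale and the Bowen property for $\Phi$ on $\tGGG(\eta)$. The scale-quantifier is handled by noting that as $\eta$ ranges over all positive reals so does $\eta/\norm{\lambda}$, so the hypothesis ``for all $\eta>0$'' in the base transfers cleanly to ``for all $\eta>0$'' in the product.

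There is no real obstacle here; the only point requiring a word of care is that the good collections for a one-sided $\lambda$-decomposition are defined by a one-sided averaging condition (only initial subsegments are controlled), so one must use the one-sided version of the product inclusion proved just above rather than the two-sided one from Proposition \ref{prop: lift to products}. With that substitution the argument is verbatim the same as in the two-sided case. I would therefore write the proof as: ``By the previous proposition, $\tGGG(\eta)\subset\GGG(\eta/\norm{\lambda})\x\GGG(\eta/\norm{\lambda})$. Specification and the Bowen property pass to Cartesian products, so $\GGG(\eta/\norm{\lambda})\x\GGG(\eta/\norm{\lambda})$ has specification at every scale and $\Phi$ has the Bowen property on it; both properties are inherited by subcollections, and as $\eta$ ranges over $(0,\infty)$ so does $\eta/\norm{\lambda}$, which completes the proof.''
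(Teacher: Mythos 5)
Your proposal is correct and matches the paper's (implicit) argument exactly: the paper likewise derives this corollary from the one-sided version of the inclusion $\tGGG(\eta)\subset\GGG(\eta/\norm{\lambda})\x\GGG(\eta/\norm{\lambda})$ together with the classical facts that specification and the Bowen property lift to Cartesian products and pass to subcollections. Your extra remarks on the gap constant, the doubled Bowen constant, and the quantifier over $\eta$ are all accurate details the paper leaves unstated.
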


Note that the pressure estimates discussed earlier in the paper hold as well, because $\PPP = B(\eta)$, the definition of which does not change in the setting of one-sided $\lambda$-decompositions. Furthermore, $P_{\exp}^{\perp}(\ph)$ is independent of the decomposition, and so lifts to the product as well. Therefore, we have the following theorem.

\begin{thm}\label{thm: half lambda}
Let $X$ be a compact metric space, $f : X\to X$ a homeomorphism, and $\ph: X\to \R$ a continuous potential. Suppose $P_{\exp}^\perp(\ph) < P(\ph)$ and $\lambda$ is a bounded, non-negative, lower semicontinuous function such that the corresponding $\lambda$-decomposition satisfies:
\begin{enumerate}
	\item $\GGG(\eta)$ has specification at all scales for all sufficiently small $\eta > 0$;
	\item $\ph$ has the Bowen property on $\GGG(\eta)$ for all sufficiently small $\eta > 0$;
\end{enumerate}
and the product system has the following properties:
\begin{enumerate}
	\item $P(\bigcap_{n\in\ZZ} f^n\lambda^{-1}(0)\x X\cup X\x f^{n}\lambda^{-1}(0)) < 2P(\ph)$;
	\item The entropy map is upper semicontinuous.
\end{enumerate}
Then the unique equilibrium state for $(X,f,\ph)$ has the $K$-property.
\end{thm}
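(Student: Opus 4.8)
The plan is to reduce Theorem~\ref{thm: half lambda} to the discrete-time version of Theorem~\ref{thm: main thm}, following exactly the structure used for two-sided $\lambda$-decompositions, and invoking the one-sided versions of the lifting lemmas established in this subsection. The only genuine difference is that the decomposition of the product system must now be taken to be a one-sided $\tilde\lambda$-decomposition (with prefixes), so that $\tilde{\PPP} = \tilde{B}(\eta)$ and $\tilde{\SSS} = \es$, and one must check that the three hypotheses of Theorem~\ref{thm: CT discrete} hold for $(X\x X, f\x f, \Phi)$ with this decomposition.

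First I would fix small $\eta > 0$ and pass to the product. By the one-sided analogue of Proposition~\ref{prop: lift to products} (the proposition immediately preceding this theorem), $\tGGG(\eta)\subset \GGG(\eta/\norm{\lambda})\x\GGG(\eta/\norm{\lambda})$; since $\GGG(\eta')$ has specification at all scales for all sufficiently small $\eta'$ and $\ph$ has the Bowen property on $\GGG(\eta')$, the corollary following that proposition gives specification at all scales for $\tGGG(\eta)$ and the Bowen property for $\Phi$ on $\tGGG(\eta)$. This handles conditions (1) and (2) of Theorem~\ref{thm: CT discrete} for the product. Next I would verify condition (3), namely $P(\tilde\PPP\cup\tilde\SSS,\Phi) < P(\Phi) = 2P(\ph)$. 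Since $\tilde\SSS = \es$, this is $P(\tilde B(\eta),\Phi) < 2P(\ph)$, and by the discrete-time version of Proposition~\ref{prop: discrete pressure decrease} (which applies verbatim in the one-sided setting, as $B(\eta)$ is unchanged), $\lim_{\eta\downarrow 0}P(\tilde B(\eta),\Phi) \le P(\tilde B_\infty,\Phi)$ where $\tilde B_\infty = \bigcap_{n\in\ZZ}(f^n\x f^n)\tilde\lambda^{-1}(0) = \bigcap_{n\in\ZZ}(f^n\lambda^{-1}(0)\x X\cup X\x f^n\lambda^{-1}(0))$. Hypothesis (1) on the product system says precisely that this is $< 2P(\ph)$, so for all sufficiently small $\eta$ condition (3) holds. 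Finally, $P_{\exp}^\perp(\Phi) < P(\Phi)$ by Proposition~\ref{prop: discrete exp gap}, which, as noted, is independent of the decomposition.

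With all hypotheses of Theorem~\ref{thm: CT discrete} verified for $(X\x X, f\x f, \Phi)$ using the one-sided $\tilde\lambda$-decomposition, that theorem yields a unique equilibrium state for $\Phi$. Theorem~\ref{NewLedr} then upgrades this to: $(X,f,\ph)$ has a unique equilibrium state with the $K$-property. (The entropy-expansivity hypothesis in the original statement of Ledrappier's criterion was removed in Theorem~\ref{NewLedr}, and upper semicontinuity of the entropy map on the product — hypothesis (2) on the product system — is what is actually needed to guarantee existence of the equilibrium state for $\Phi$ in the first place and to apply Proposition~\ref{prop: discrete pressure decrease}.)

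I do not expect any step to be a serious obstacle: the work is entirely in checking that each lemma from the two-sided theory carries over, and in this subsection the relevant one-sided analogues (lifting of $\GGG(\eta)$ to products, preservation of specification and the Bowen property, the pressure-decrease estimate) have all been stated or noted to hold verbatim. The only point requiring a moment's care is making sure the \emph{product} decomposition is itself taken one-sided — one must not try to use a two-sided $\tilde\lambda$-decomposition, since the prefixes of the base decomposition force $\tilde\PPP\neq\es$ but there is no natural two-sided structure — and correctly identifying $\tilde B_\infty$ with the set appearing in hypothesis (1). Everything else is a transcription of the proof of the theorem preceding Subsection~\ref{Discrete}'s one-sided material.
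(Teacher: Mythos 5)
Your proposal is correct and follows essentially the same route as the paper: lift the one-sided decomposition to a one-sided $\tilde\lambda$-decomposition on the product, use the one-sided lifting proposition and its corollary for specification and the Bowen property on $\tGGG(\eta)$, note that $\tilde\PPP=\tilde B(\eta)$ so the pressure-decrease estimate together with hypothesis (1) gives the required gap, invoke Proposition \ref{prop: discrete exp gap} for expansivity, and conclude via Theorem \ref{thm: CT discrete} and Theorem \ref{NewLedr}. This matches the paper's argument, which is stated only in outline for the one-sided case.
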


\begin{rem}
One-sided $\lambda$-decompositions can be formulated for flows in the obvious way. Then, the analogue of this theorem holds for flows. In other words, Theorem \ref{Theorem A} holds for one-sided $\lambda$-decompositions. We omit further discussion, as no new technical difficulties arise.
\end{rem}

\section{Applications}\label{Applications}

The weaker version of this result has already been applied in some settings. In \cite{CaT}, it was used to show the $K$-property for some potentials for the geodesic flow on rank 1 compact manifolds with non-positive curvature. Additionally, in \cite{CKP}, it was used to establish the $K$-property in the same setting except for manifolds with no focal points. However, there are other settings in which this result can be applied. For instance, work of Climenhaga, Fisher, and Thompson on Ma\~n\'e diffeomorphisms \cite{CFT2} uses a decomposition similar to one-sided $\lambda$-decompositions, and work of Wang on equilibrium states for the Katok map \cite{Wa19} also uses the setup of $\lambda$-decompositions. In what follows, we will show how this work can be applied in these settings as well to obtain the $K$-property.

{\subsection{Katok Map}

The Katok map, introduced in \cite{katokmap}, is an example of a $C^\infty$, non-uniformly hyperbolic diffeomorphism on $\mathbb{T}^2$, which is created by a slow-down of a uniformly hyperbolic system in a small neighborhood of a fixed point. We make use of the $\lambda$-decomposition formalism for the Katok map introduced in \cite{Wa19}, which take $\lambda$ to be the indicator function of the complement of this neighborhood. Shahidi and Zelerowicz \cite{ShaZel} have already shown that a wide class of equilibrium states of the Katok map are Bernoulli, and thus $K$, so we mention the application only briefly, as a different method for achieving the $K$-property, and as an indication for the strength of this result as it applies to perturbations of potentials. We refer to \cite{Wa19} for the precise definitions and many results about the Katok map.

The Katok map has specification and is expansive, so its Cartesian product $(X\x X,f\x f)$ is as well. Therefore, by Corollary \ref{cor: pressure gap}, 
$$h(\tilde{B}_\infty) < h(X\x X) = 2h(X)$$
where $\tilde{B}_\infty$ is obtained from the $\lambda$-decomposition in \cite{Wa19}. Consequently, by Proposition \ref{prop: open pressure}, this pressure gap holds for all potentials sufficiently close to constant, and so, for any such potential which has the Bowen property on this decomposition, the unique equilibrium state has the $K$-property. Using \cite[Proposition 6.3]{Wa19}, this applies to all H\"older continuous potentials sufficiently close to constant.}

\subsection{Ma\~n\'e Diffeomorphisms}

We show that our work applies to the class of Ma\~n\'e diffeomorphisms, proving Theorem \ref{Theorem C}. These are partially hyperbolic, robustly transitive diffeomorphisms originally constructed in \cite{Ma78} on $\mathbb{T}^3$, although the construction applies for $d\geq 3$ as well. In \cite{BFSV}, the authors showed intrinsic ergodicity in this setting through constructing the measure of maximal entropy. In \cite{CFT2}, the authors obtain uniqueness of equilibrium states for a class of H\"older potentials and suitable $C^1$ perturbations of these diffeomorphisms using the decomposition theory developed in \cite{CT}. We will use these results in combination with the results established in this paper to establish the $K$-property for a class of Ma\~n\'e diffeomorphisms sufficiently $C^0$-close to Anosov and H\"older potentials sufficiently close to constant. In \cite{BFSV}, they obtain the Bernoulli property for the measures of maximal entropy, but to the best of the author's knowledge, this is a new result for the other equilibrium states.

\subsubsection{Formulating as a one-sided $\lambda$-decomposition}

We briefly describe the setting for Ma\~n\'e's construction, and refer to \cite{Ma78} and \cite{CFT2} for more details. We note as well that the construction in \cite{CFT2} differs slightly from both \cite{Ma78} and \cite{BFSV}, as it has a one-dimensional unstable manifold, rather than a one-dimensional stable. However, we expect that the techniques used in either paper can be applied in both settings.

Fix $d\geq 3$, and let $A\in SL(d,\ZZ)$ be such that all of its eigenvalues are simple, positive, and irrational, and exactly one lies outside the unit circle. Let $X = \mathbb{T}^d$ and set $f_A : X\to X$ to be the hyperbolic automorphism induced by $A$. Then $(X,f_A)$ has a unique measure of maximal entropy, which is Lebesgue measure. Ma\~n\'e's construction is as follows. Let $3\eps$ be an expansivity constant for $f_A$, and consider $\rho \in (0,3\eps)$. The Ma\~n\'e diffeomorphism $f_M$ is a $C^0$ perturbation of $f_A$ inside $B(p,\rho)$, where $p$ is a fixed point of $f_A$. We briefly highlight some of the main features of $f_M$ used in the decomposition arguments
\begin{itemize}
	\item $f_M|_{X\setminus B(p,\rho)} = f_A|_{X\setminus B(p,\rho)}$
	\item Outside of $B(p,\frac{\rho}{2})$, the center direction $E^c$ experiences contraction by $\theta_s < 1$, the second largest eigenvalue of $f_A$ 
	\item In $B(p,\frac{\rho}{2})$, there is a maximum amount of expansion $\theta > 1$ which can occur in the central direction, and $\theta$ can be made arbitrarily close to $1$.
\end{itemize}

The decomposition considered in \cite{CFT2} is given by taking $(x,n)$ to be a bad orbit segment if $\frac{1}{n}\sum_{i=0}^{n-1}\chi_{X\setminus B(p,\rho)}(f_M^ix) <  \eta$. However, $\chi_{X\setminus B(p,\rho)}$ is upper semicontinuous, not lower semicontinuous, which is required for a $\lambda$-decomposition. Instead, we define $\lambda = \chi_{X\setminus \overline{B(p,.9\rho)}}$, and consider the corresponding one-sided $\lambda$-decomposition with prefixes. This does not substantively affect the arguments in \cite{CFT2}, as we will outline. In particular, it just means that for $(x,n)\in\GGG(\eta)$, there is good behavior for $y\in B_n(x,.45\rho)$ as opposed to for $y\in B_n(x,\rho/2)$. The impact of this on the arguments in \cite{CFT2} is to change the constant $\frac{\rho}{2}$ to $.4\rho$ in \cite[Lemma 5.3]{CFT2}, which in turn shrinks the scale that the Bowen property is shown at by the corresponding amount. However, specification is shown at all scales, so this causes no problems. Therefore, we have that $\GGG(\eta)$ has specification for all $\eta > 0$ and any H\"older continuous potential $\ph$ has the Bowen property on $\GGG(\eta)$ for all $\eta > 0$. Furthermore, $B(\eta)$ is strictly contained in the set of bad orbit segments from \cite{CFT2}, and so the pressure estimate conditions still hold. These arguments hold for small $C^1$ perturbations of $f_M$ just as in \cite{CFT2}.

\subsubsection{Pressure estimate}

In order to apply Theorem \ref{thm: half lambda}, by the above discussion, all that we need to check is that the pressure gap holds in the product space. We will make use of Theorem \ref{thm: measure estimate}, and show that the measure of $\lambda^{-1}(0) = \overline{B(x,.9\rho)}$ is less than $\frac{1}{2}$. For this, we will appeal to the following result of \cite{BFSV}.

\begin{prop}[{\cite[Theorem 1.5]{BFSV}}]\label{BFSV prop}
Let $f : X \to X$ be an expansive homeomorphism of a compact metric space with specification, and let $\mu$ be the unique measure of maximal entropy. Let $g : X\to X$ be a continuous extension via $\pi : X\to X$ that is continuous and surjective (i.e., $f\circ\pi = \pi\circ g$). Then, if 
\begin{enumerate}
	\item $h(\pi^{-1}(\pi(x));g) = 0$ for all $x\in X$
	\item $\mu(\{\pi(x)\mid \pi^{-1}(\pi(x)) = \{x\}\}) = 1$.
\end{enumerate}
Then $(X,g)$ has a unique measure of maximal entropy, $\nu$, and $\pi_*\nu = \mu$.
\end{prop}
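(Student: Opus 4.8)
The plan is to derive the statement from the relativized variational principle of Ledrappier and Walters applied to the factor map $\pi\colon (X,g)\to(X,f)$ (it is continuous, onto, and satisfies $f\circ\pi=\pi\circ g$, so it is a genuine factor map). That principle says: for every $m\in\MMM(X,f)$,
$$\sup\{h_\nu(g)\mid\nu\in\MMM(X,g),\ \pi_*\nu=m\}=h_m(f)+\int_X h\big(g,\pi^{-1}(x)\big)\,dm(x),$$
where $h(g,\,\cdot\,)$ is the Bowen topological entropy of a subset relative to $g$. Hypothesis (1) makes the integrand vanish identically, so this collapses to the bound $h_\nu(g)\le h_{\pi_*\nu}(f)$, valid for all $\nu\in\MMM(X,g)$; taking suprema over $m$ and using the Variational Principle on both systems yields $\htop(g)=\htop(f)$.

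To get existence of a measure of maximal entropy for $g$ I would lift $\mu$ along $\pi$. Set $Z:=\{x\in X\mid \pi^{-1}(x)\text{ is a single point}\}$; this set is Borel, and it is fully $f$-invariant: if $\pi^{-1}(x)$ is a singleton then, using that $f,g$ are homeomorphisms with $f\pi=\pi g$, so is $\pi^{-1}(f(x))$, and conversely. By hypothesis (2), $\mu(Z)=1$. On $Z$ the map $\pi$ is injective, hence admits a Borel inverse $\sigma\colon Z\to\pi^{-1}(Z)$, which conjugates $f|_Z$ with $g|_{\pi^{-1}(Z)}$ (and $\pi^{-1}(Z)$ is fully $g$-invariant). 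Then $\nu:=\sigma_*\mu$ is a $g$-invariant Borel probability with $\pi_*\nu=\mu$, measure-theoretically isomorphic to $(X,f,\mu)$; therefore $h_\nu(g)=h_\mu(f)=\htop(f)=\htop(g)$, so $\nu$ is a measure of maximal entropy for $(X,g)$ projecting to $\mu$. Note this construction sidesteps any need for upper semicontinuity of the entropy map on $(X,g)$.

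For uniqueness, let $\nu'$ be any measure of maximal entropy for $(X,g)$. From the first step, $\htop(f)=\htop(g)=h_{\nu'}(g)\le h_{\pi_*\nu'}(f)\le\htop(f)$, so $\pi_*\nu'$ realizes $\htop(f)$; since $(X,f)$ is expansive with specification it has a unique measure of maximal entropy, forcing $\pi_*\nu'=\mu$. Then $\nu'\big(\pi^{-1}(Z)\big)=\mu(Z)=1$, and since $\pi$ is a Borel isomorphism from $\pi^{-1}(Z)$ onto $Z$, we conclude $\nu'=\sigma_*(\pi_*\nu')=\sigma_*\mu=\nu$. This gives both uniqueness and $\pi_*\nu=\mu$.

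The hard part will be the first step: identifying precisely the fibre-entropy term in the Ledrappier--Walters formula and confirming that hypothesis (1), as stated, kills the integral --- this is exactly where compactness of each fibre and the spanning-set definition of topological entropy of a (non-invariant) compact set come in. The remaining measure-theoretic points --- Borel measurability of $Z$ and the existence of the section $\sigma$ --- are routine (standard measurable selection, or immediate from injectivity of $\pi$ on $Z$).
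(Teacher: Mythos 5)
Your argument is correct and is essentially the standard proof of this result: the paper itself only quotes the statement from \cite{BFSV} without proof, and the argument there runs exactly as you propose --- the Ledrappier--Walters relativized variational principle together with hypothesis (1) gives $h_\nu(g)\le h_{\pi_*\nu}(f)$ for every $\nu$, while hypothesis (2) supplies the Borel section $\sigma$ on the injectivity set that lifts $\mu$ isomorphically and forces any maximal entropy measure of $g$ (which must project to $\mu$ by Bowen's uniqueness) to equal $\sigma_*\mu$. I see no gaps; the measurability points you defer are indeed routine.
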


This is applied to Ma\~n\'e diffeomorphisms in \cite{BFSV}. We will restate the arguments using results from \cite{CFT2}, and show that they apply to $C^1$ perturbations of $f_M$ as well. Although possible, we will not treat the second condition, as it is only used to show uniqueness of $\nu$, which we already have in our setting.

The Anosov Shadowing Theorem, a proof of which can be found in \cite[Theorem 1.2.3]{Pilyugin} says that given any $g : X\to X$ sufficiently $C^0$-close to $f_A$, we can define a semiconjugacy $\pi : X\to X$ by sending $x\in X$ to the point $y$ such that $d(g^nx,f^ny) < \delta$ for some $\delta$ depending on $d^{C_0}(f_A,g)$, and bounded above by the expansivity constant for $f_A$, $3\eps$. To check the first condition, observe that if $z_1,z_2\in \pi^{-1}(\pi(x))$, then by definition of $\pi$, $d(g^nz_i, f_A^n\pi(x)) \leq \delta$. Consequently, $\Gamma_{2\delta}(x) \supset \pi^{-1}(\pi(x))$. Now, \cite[Lemma 5.8]{CFT2} shows that $g$ is entropy expansive at scale $6\eps$, and since $2\delta < 6\eps$, we have that $h(\pi^{-1}(\pi(x));g) = 0$ for all $x\in X$.

We will now use this result to show that $\nu(\lambda^{-1}(0)) < \frac{1}{2}$, where $\lambda^{-1}(0) = \overline{B(p,2\rho)}$. As $\pi_*\nu = \mu$, we see
$$\nu(\lambda^{-1}(0)) \leq \nu(\pi^{-1}(\pi(\lambda^{-1}(0)))) = \mu(\pi(\lambda^{-1}(0))) \leq \mu(\overline{B(p,2\rho + \delta)}),$$
where the last inequality comes from the fact that $d(x,\pi(x)) \leq \delta$. As $\rho$ and $\delta$ can both be made arbitrarily close to $0$ based on $d_{C^0}(g,f_A)$, and since $\mu$ is Lebesgue measure, we see that for all Ma\~n\'e diffeomorphisms and perturbations thereof sufficiently $C^0$-close to $f_A$, there is a pressure gap in the product by Theorem \ref{thm: measure estimate}. Thus, by Proposition \ref{prop: open pressure}, for all H\"older continuous potentials sufficiently close to constant, the pressure gap holds as well, and so the unique equilibrium state is $K$. In particular, it includes scalar multiples of the geometric potential $t\ph^u$, for $t$ close to $0$. We expect that the construction of Ma\~n\'e diffeomorphisms can be carried out so that $\ph^u$ is arbitrarily close to constant without increasing $d(f_A,f_M)$. In this case, the SRB measure is $K$. We leave this as a heuristic claim, as adding the details to verify this rigorously is beyond the scope of this paper.

\subsection*{Acknowledgments}
I would like to thank Dan Thompson for first suggesting this problem, and his guidance throughout this process

	\bibliographystyle{amsplain}
	\bibliography{Kpropertyreferences}

\end{document}